\title{Asymptotic Automorphism Groups of Circulant Graphs and Digraphs}
\author{Soumya Bhoumik and Edward Dobson\\
Department of Mathematics and Statistics \\
Mississippi State University\\
Mississippi State, MS 39759 USA\\
and\\
Joy Morris\\
Department of Mathematics and Computer Science\\
University of Lethbridge\\
Lethbridge, AB T1K 3M4 Canada
}
\newtheorem{thrm}{Theorem}[section]
\newtheorem{lem}[thrm]{Lemma}
\newtheorem{cor}[thrm]{Corollary}
\newtheorem{conj}[thrm]{Conjecture}
\theoremstyle{definition}
\newtheorem{defin}[thrm]{Definition}
 \newcounter{case}
 \renewcommand{\thecase}{\arabic{case}}
\newcounter{subcase}
 \renewcommand{\thesubcase}{\alph{subcase}}
\def\tl{\triangleleft}
\def\AGL{{\rm AGL}}
\def\mod{{\rm mod\ }}
\def\fix{{\rm fix}}
\def\Aut{{\rm Aut}}
\def\Stab{{\rm Stab}}
\def\gcd{{\rm gcd}}
\def\la{\langle}
\def\ra{\rangle}
\def\Z{{\mathbb Z}}
\def\N{{\mathbb N}}
\def\AGL{{\rm AGL}}
\def\SDW{{\rm SDW}}
\def\NonNor{{\rm NonNor}}
\def\NonNorG{{\rm NonNorG}}
\def\DW{{\rm DW}}
\def\SW{{\rm GW}}
\def\H\times S_m{{\rm H\times S_m}}
\def\Nor{{\rm Nor}}
\def\NorG{{\rm NorG}}
\def\DRR{{\rm DRR}}
\def\ACD{{\rm ACD}}
\def\ACG{{\rm ACG}}
\def\Small{{\rm Small}}
\def\SWG{{\rm GWG}}
\def\DWG{{\rm DWG}}
\newenvironment{proof}{\noindent {\sc Proof}.}
                {\phantom{a} \hfill \framebox[2.2mm]{ } \bigskip}
\begin{document}

\pagestyle{plain}

\baselineskip = 1.3\normalbaselineskip

\maketitle

\begin{abstract}

We show that almost all circulant graphs have automorphism groups as small as possible. Of the circulant graphs
that do not have automorphism group as small as possible, we give some families of integers such that it is not true that almost all circulant graphs whose order lies in any one of these families, are normal.  That almost all Cayley (di)graphs whose automorphism group is not as small as possible are normal was conjectured by the second author, so these results provide counterexamples to this conjecture.  It is then shown that there is a ``large" family of integers for which almost every circulant digraph whose order lies in this family and that does not have automorphism group as small as possible, is normal.  We additionally explore the asymptotic behavior of the automorphism groups of circulant (di)graphs that are not normal, and show that no general conclusion can be obtained.
\end{abstract}

\section{Introduction}

Determining the full automorphism group of a Cayley (di)graph is one of the most fundamental questions one can ask about a Cayley (di)graph. While it is usually quite difficult to determine the automorphism group of a Cayley (di)graph, characterizing almost all Cayley graphs of a group $G$, based on the structure of $G$, has been of consistent interest in the last few decades. Babai, Godsil, Imrich, and Lov\'asz (see \cite[Conjecture 2.1]{BabaiG1982}) conjectured that almost all Cayley graphs of any group $G$ that is not generalized dicyclic or abelian with exponent greater than $2$ are GRR's (have automorphism group $G_L$, the left regular representation of $G$).  A similar conjecture was made for digraphs (with no exceptions) by Babai and Godsil \cite{BabaiG1982}.  Babai and Godsil \cite[Theorem 2.2]{BabaiG1982} proved these two conjectures for nilpotent (and nonabelian) groups of odd order. In 1998, Xu \cite{Xu1998} introduced the notion of a normal Cayley (di)graph of a group $G$:  a Cayley (di)graph $\Gamma$ of a group $G$ such that $G_L\lhd \Aut(\Gamma)$.  Xu also conjectured that for each positive integer $n$ there exists a group $G$ such that almost all Cayley (di)graphs of $G$ are normal Cayley (di)graphs of $G$ (see \cite[Conjecture 1]{Xu1998} for the precise formulation of this conjecture). In 2010, the second author showed that almost all Cayley graphs of an abelian group $G$ of odd prime-power order are normal \cite{Dobson2010b}.

In this paper, we first investigate in Section 3 the proportion of the set of normal circulant (di)graphs in the family of circulant (di)graphs. We show that almost all circulant graphs have automorphism group as small as possible (Theorem \ref{graphsmall}), which make them normal immediately. In \cite[Conjecture 4.1]{Dobson2010b}, the second author conjectured that almost every Cayley (di)graph whose automorphism group is not as small as possible is a normal Cayley (di)graph.  We show that this conjecture fails for circulant digraphs of order $n$ (Theorem \ref{conjfalse1}), where $n\equiv 2\ (\mod 4)$ has a fixed number of distinct prime factors, and point out some ``gaps" in the proof of \cite[Theorem 3.5]{Dobson2010b}, which leads to additional counterexamples to \cite[Conjecture 4.1]{Dobson2010b} for graphs in the case where $n = p$ or $p^2$ and $p$ is a {\bf safe prime}, i.e. $p = 2q + 1$ where $q$ is prime, or when $n$ is a power of 3 (Theorem \ref{primepower}).  Finally, we prove that the conjecture holds for digraphs of order $n$ where $n$ is odd and not divisible by $9$ (Theorem \ref{normaldi}). We also show that the conjecture holds for graphs of order $n$ where $n$ is still odd and not divisible by 9, if we add the extra condition that $n$ is not of the form $n = p$ or $p^2$ where $p$ is a safe prime (Theorem \ref{normal}).

In Section 4, we focus on non-normal circulant (di)graphs. A variety of authors have shown that non-normal Cayley (di)graphs are either generalized wreath products (see Definition \ref{semiwreathdefin}) or have automorphism group that
of a deleted wreath product (see Definition \ref{dwdefin}). We show that there exist sets of integers $S_1,S_2$, and a family of sets of integers $S_c$ such that almost all non-normal circulant graphs and digraphs whose order is in $S_1$ have automorphism group that of a deleted wreath product (Theorem \ref{n=pq^2r^2}), almost all non-normal circulant graphs and digraphs whose order is in $S_2$ are generalized wreath products (Theorem \ref{p^2|n}), and neither generalized wreath products nor those graphs whose automorphism group is that of a deleted wreath product of circulant graphs and digraphs dominates amongst those whose order is in any $S_c$ (Theorem \ref{counterexample}).  We remark that we do not know if any set $S_c$ is infinite (but when $c=2$ for example, $S_c$ consists of all products of twin primes).

In the next section, we will focus on background results and terminology, as well as developing the counting tools needed in Sections 3 and 4.

\section{Preliminaries and tools}

We start by stating basic definitions, and then proceed to known results in the literature that we will need.  We will finish with results that will be the main tools throughout the rest of the paper.

By ``almost all" circulant (di)graphs in some family $F_1$ of circulant (di)graphs of order in a set $S$ of integers being in some family $F_2$ of circulant (di)graphs, we mean that

$$\lim_{n\in S,n\to\infty}\frac{\vert F_2\vert}{\vert F_1\vert} = 1.$$

\begin{defin}\label{Cayleygraph}
Let $G$ be a group and $S\subset G$ such that $1_G\not\in S$. Define
a digraph $\Gamma = \Gamma(G,S)$ by $V(\Gamma) = G$ and $E(\Gamma) =
\{(u,v):v^{-1}u\in S\}$.  Such a digraph is a {\it Cayley digraph of $G$
with connection set $S$}. A Cayley graph of $G$ is defined
analogously though we insist that $S = S^{-1} = \{s^{-1}:s\in S\}$.  If $G$ is a cyclic group, then a Cayley (di)graph of $G$ is a {\it circulant (di)graph of order $n$}, where $\vert G\vert = n$.
\end{defin}

It is straightforward to verify that for $g\in G$, the map $g_L:G\to
G$ by $g_L(x) = gx$ is an automorphism of $\Gamma$. Thus $G_L =
\{g_L:g\in G\}$, the left regular representation of $G$, is a
subgroup of the automorphism group of $\Gamma$, $\Aut(\Gamma)$.

\begin{defin}
Let $G$ be a transitive permutation group with complete block system ${\cal B}$.  By $G/{\cal B}$, we mean the subgroup of
$S_{\cal B}$ induced by the action of $G$ on ${\cal B}$, and by $\fix_G({\cal B})$ the kernel of this action.
Thus $G/{\cal B} = \{g/{\cal B}:g\in G\}$ where $g/{\cal B}(B_1) = B_2$ if and only if $g(B_1) = B_2$, $B_1,B_2\in{\cal B}$, and $\fix_G({\cal B}) = \{g\in G:g(B) = B{\rm\ for\ all\ }B\in{\cal B}\}$.
\end{defin}

It is not difficult to show using the fact that a transitive abelian group is regular \cite[Proposition 4.4]{Wielandt1964}, and that every block system of a permutation group $G$ containing a regular abelian subgroup is formed by the orbits of a normal subgroup of $G$ (in fact, formed by the orbits of a subgroup of a regular
abelian subgroup of $G$).  In this paper, the transitive permutation groups that we will encounter will also contain a regular cyclic subgroup, and so every complete block system will be formed by the orbits of a normal subgroup.  In fact, a complete block system will always consist of the cosets of a cyclic group \cite[Exercise 6.5]{Wielandt1964}.

A {\it
vertex-transitive (di)graph} is a (di)graph whose automorphism group acts
transitively on the vertices of the (di)graph.

\begin{defin}
Let $\Gamma_1$ and $\Gamma_2$ be vertex-transitive digraphs.  Let
$$E = \{((x,x'),(y,y')): xy\in E(\Gamma_1), x',y'\in V(\Gamma_2)\mbox{ or $x = y$ and }x'y'\in E(\Gamma_2)\}.$$
Define the {\it wreath} ({\it or lexicographic}) {\it product} of
$\Gamma_1$ and $\Gamma_2$, denoted $\Gamma_1\wr \Gamma_2$, to be the
digraph such that $V(\Gamma_1\wr \Gamma_2) = V(\Gamma_1)\times
V(\Gamma_2)$ and $E(\Gamma_1\wr \Gamma_2) = E$.
\end{defin}

We remark that the
wreath product of a circulant digraph of order $m$ and a circulant
digraph of order $n$ is circulant.
Note that what we have just defined as $\Gamma_1 \wr \Gamma_2$ is sometimes defined as $\Gamma_2 \wr \Gamma_1$, particularly in the work of Praeger, Li, and others from the University of Western Australia.

\begin{defin}
Let $\Omega$ be a set and $G\le S_\Omega$ be transitive.  Let $G$
act on $\Omega\times\Omega$ by $g(\omega_1,\omega_2) =
(g(\omega_1),g(\omega_2))$ for every $g\in G$ and
$\omega_1,\omega_2\in\Omega$.  We define the {\it $2$-closure of
$G$}, denoted $G^{(2)}$, to be the largest subgroup of $S_\Omega$
whose orbits on $\Omega\times\Omega$ are the same as $G$'s.  Let
${\mathcal O}_1,\ldots,{\mathcal O}_r$ be the orbits of $G$ acting
on $\Omega\times\Omega$.  Define digraphs $\Gamma_1,\ldots,\Gamma_r$
by $V(\Gamma_i) = \Omega$ and $E(\Gamma_i) = {\mathcal O}_i$.  Each
$\Gamma_i$, $1\le i\le r$, is an {\it orbital digraph of G}, and it
is straightforward to show that $G^{(2)} =
\cap_{i=1}^r\Aut(\Gamma_i)$.   A {\it generalized orbital digraph of
$G$} is an arc-disjoint union of orbital digraphs of $G$.
\end{defin}

Clearly the automorphism
group of a graph or digraph is $2$-closed.

The following theorem appears in \cite {Li2005} and is a translation of results that were proven in
\cite{EvdokimovP2002, LeungM1998, LeungM1996} using Schur
rings, into group theoretic language. We have re-worded part (1) slightly to clarify the meaning. In the special case of
circulant digraphs of square-free order $n$, an equivalent
result was proven independently in \cite{DobsonM2005}.

\begin{thrm}\label{maintool}
Let $G\le S_n$ contain a regular cyclic subgroup $\la\rho\ra$.  Then
one of the following statements holds:
\begin{enumerate}
\item There exist $G_1, \ldots, G_r$ such that $G^{(2)}=G_1\times\ldots \times G_r$, and for each $G_i$, either $G_i \cong S_{n_i}$, or
 $G_i$ contains a normal regular
cyclic group of order $n_i$. Furthermore, $r\ge
1$, $\gcd(n_i,n_j) = 1$ for $i \neq j$, and $n =
n_1n_2\cdots n_r$.
\item $G$ has a normal subgroup $M$ whose orbits form the complete
block system ${\cal B}$ of $G$ such that each connected generalized
orbital digraph contains a subdigraph $\Gamma$ which is an orbital
digraph of $G$ and has the form $\Gamma = (\Gamma/{\cal B})\wr
\bar{K}_b$, where $b = \vert M\cap \la\rho\ra\vert$.
\end{enumerate}
\end{thrm}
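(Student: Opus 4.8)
The plan is to prove Theorem~\ref{maintool} by invoking the Schur-ring classification of circulant association schemes (equivalently, of $2$-closed groups containing a regular cyclic subgroup) as developed in \cite{EvdokimovP2002, LeungM1998, LeungM1996}, and then translating the Schur-ring dichotomy into the stated group-theoretic statement. Since $\Aut(\Gamma)$ for any generalized orbital digraph $\Gamma$ of $G$ contains $G$ and is $2$-closed, and since $G^{(2)}$ is itself the intersection of the automorphism groups of the orbital digraphs, it suffices to analyze $G^{(2)}$ directly. By the cited Schur-ring results, the Schur ring over $\Z_n$ associated to $\la\rho\ra$ (whose associated automorphism group is $G^{(2)}$) is either a nontrivial \emph{tensor product} (``direct'' or ``star'' product) of Schur rings over coprime factors, or a nontrivial \emph{wedge (generalized wreath) product}, or it is of one of the ``primitive/normal'' types in which the cyclic group acts normally.

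First I would set up the correspondence: a complete block system of $G$ corresponds to a subgroup $H\le\la\rho\ra$ (by the remarks in the excerpt, every complete block system of a group containing a regular cyclic subgroup is formed by the cosets of a subgroup of $\la\rho\ra$), and normal subgroups $M$ with $M\cap\la\rho\ra = H$ realize ${\cal B}$. In the tensor-product case, the Schur ring decomposes as $S_1\otimes\cdots\otimes S_r$ over $\Z_{n_1}\times\cdots\times\Z_{n_r}$ with the $n_i$ pairwise coprime; taking $2$-closures factorwise gives $G^{(2)} = G_1\times\cdots\times G_r$ where each $G_i$ is the $2$-closure of a Schur ring over $\Z_{n_i}$ that is itself \emph{not} a proper tensor product. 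For such an ``indecomposable'' factor the Schur-ring classification (Leung--Man, Evdokimov--Ponomarenko) says it is either the full Schur ring (giving $G_i\cong S_{n_i}$) or one in which $\la\rho\ra$ restricted to that coordinate is normal in $G_i$; this yields part~(1). Here I would be careful to note that the generalized wreath (wedge) product sub-case is precisely what is being pushed into part~(2), so part~(1) is exactly the ``no nontrivial wedge factor'' situation.

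For part~(2), suppose the Schur ring is a nontrivial wedge product with respect to a chain $1 < H < K < \Z_n$ of subgroups (the ``lower'' group $H$ of order $b$ and an ``upper'' group $K$), meaning every basic set lying outside $K$ is a union of $H$-cosets. Let $M = \fix_{G}({\cal B})$ where ${\cal B}$ is the block system of $H$-cosets; one checks $M\lhd G$, its orbits are the $H$-cosets, and $b = \vert M\cap\la\rho\ra\vert = \vert H\vert$. The wedge condition translates into the statement that for each orbital digraph $\Gamma$ whose basic set is not contained in $K$ — and every connected generalized orbital digraph must use such a basic set, since basic sets inside $K$ generate a disconnected (imprimitive within $K$) digraph — the arcs of $\Gamma$ are exactly the preimages under the quotient map of the arcs of $\Gamma/{\cal B}$, i.e. $\Gamma = (\Gamma/{\cal B})\wr\bar K_b$. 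I would then verify the ``contains a subdigraph which is an orbital digraph'' phrasing by selecting a single orbital constituent of the connected generalized orbital digraph that already has a basic set outside $K$.

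The main obstacle I expect is not the logical skeleton above but the faithful \emph{translation} between the Schur-ring language of \cite{LeungM1996, LeungM1998, EvdokimovP2002} and the purely permutation-group formulation in the statement — in particular, (a) making precise that ``indecomposable factor with normal cyclic subgroup'' in the $2$-closure corresponds exactly to the non-full, non-wedge Schur rings in the Leung--Man list, with the coprimality and $n = n_1\cdots n_r$ bookkeeping matching the tensor decomposition; and (b) showing that the wedge condition forces the wreath structure on \emph{every} connected generalized orbital digraph rather than merely on some orbital digraph, which is where the careful choice of the subdigraph $\Gamma$ and the connectivity argument (a generalized orbital digraph using only basic sets inside $K$ cannot be connected) does the work. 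Since an equivalent statement for square-free $n$ was obtained independently in \cite{DobsonM2005}, I would cross-check the general formulation against that special case to confirm the constants ($b = \vert M\cap\la\rho\ra\vert$) and the direction of the wreath product are stated consistently with our convention $\Gamma_1\wr\Gamma_2$.
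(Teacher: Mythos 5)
The paper does not prove Theorem \ref{maintool}: it is imported from \cite{Li2005} (with part (1) slightly re-worded) as a group-theoretic translation of the Schur-ring results of \cite{EvdokimovP2002, LeungM1998, LeungM1996}, so there is no in-paper argument to compare yours against. Your sketch follows exactly the route the paper attributes --- deducing the tensor-product/wedge-product dichotomy from the Leung--Man and Evdokimov--Ponomarenko classification and translating it into permutation-group language, with the delicate points (the coprimality bookkeeping in the tensor case, and the connectivity argument that a generalized orbital digraph supported inside the upper group $K$ must be disconnected) correctly identified --- but, like the paper, it ultimately defers the substantive content to the cited Schur-ring literature rather than establishing it independently.
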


\begin{defin}\label{semiwreathdefin}
A circulant digraph $\Gamma$ with connection set $S$ is said to be a {\it $(K,H)$-generalized wreath circulant digraph} (or just a {\it generalized wreath circulant digraph}) if there exist groups $H$, $K$ with $1<K\le H \le \Z_n$ such that $S\setminus H$ is a union of cosets of $K$.
\end{defin}

The name generalized wreath is chosen for these digraphs as if $K = H$, then $\Gamma$ is in fact a wreath product.  We now wish to investigate the relationship between generalized wreath circulant digraphs and the preceding result.  We shall have need of the following result.

\begin{lem}\label{disconnectedblocks}
Let $\Gamma$ be a disconnected generalized orbital digraph of a
transitive group $G$.  Then the components of $\Gamma$ form a
complete block system ${\cal B}$ of $G$.
\end{lem}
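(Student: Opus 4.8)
The plan is to exploit the fact that the automorphism group of a digraph permutes the connected components among themselves, combined with the transitivity of $G$, to recover the block structure. First I would observe that since $\Gamma$ is a generalized orbital digraph of $G$, every $g \in G$ is an automorphism of $\Gamma$; this is immediate because each orbital digraph of $G$ is $G$-invariant (its edge set is a union of $G$-orbits on $\Omega \times \Omega$), and an arc-disjoint union of $G$-invariant digraphs is again $G$-invariant. Hence $G \le \Aut(\Gamma)$.

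Next I would recall the standard fact that any automorphism of a digraph maps connected components onto connected components: if $C$ is a component of $\Gamma$ and $\sigma \in \Aut(\Gamma)$, then $\sigma(C)$ is connected and is not properly contained in any larger connected subdigraph, so $\sigma(C)$ is again a component. Thus the set ${\cal B}$ of (vertex sets of) connected components of $\Gamma$ is a partition of $\Omega = V(\Gamma)$ that is preserved setwise by every element of $G$. To conclude that ${\cal B}$ is a \emph{complete} block system of $G$ (rather than merely a $G$-invariant partition), I would check that $G$ acts transitively on ${\cal B}$: given two components $C_1, C_2$, pick $v_1 \in C_1$ and $v_2 \in C_2$; since $G$ is transitive on $\Omega$ there is $g \in G$ with $g(v_1) = v_2$, and then $g(C_1)$ is a component containing $v_2$, hence $g(C_1) = C_2$. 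A $G$-invariant partition of $\Omega$ on which $G$ is transitive, with blocks of size strictly between $1$ and $|\Omega|$ --- the latter guaranteed because $\Gamma$ is disconnected (so more than one component) and, implicitly, not edgeless in a way that would make components singletons; in the relevant applications $\Gamma$ has at least one edge in each component --- is precisely a complete block system.

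I do not expect any serious obstacle here: the result is essentially the observation that ``disconnection is an automorphism-invariant property'' together with transitivity. The only point requiring a little care is the degenerate case: if $\Gamma$ has no arcs at all, the ``components'' are singletons and ${\cal B}$ is the trivial partition into points, which is still (vacuously) a block system; if $\Gamma$ is connected the hypothesis fails. So the statement is robust, and the mild subtlety is simply to note that the components genuinely form blocks of nontrivial size in the cases of interest, which follows since a connected component with at least one vertex incident to an arc has size at least two, and $\Gamma$ being disconnected forces at least two components, so each block is proper.
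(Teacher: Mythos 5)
Your proof is correct, and it takes a more elementary route than the paper's. You argue directly: $\Gamma$ is a union of orbital digraphs, each of which is $G$-invariant, so $G\le\Aut(\Gamma)$; automorphisms permute the connected components; and transitivity of $G$ on vertices then makes the set of components a $G$-invariant partition on which $G$ acts transitively, i.e.\ a complete block system. The paper instead routes the argument through the $2$-closure: it invokes Wielandt's theorem that $G$ and $G^{(2)}$ have the same blocks, writes $G^{(2)}=\cap_{i=1}^r\Aut(\Gamma_i)$ over all orbital digraphs, observes that $G^{(2)}\le\cap_{i=1}^s\Aut(\Gamma_i)\le\Aut(\Gamma)$ when $\Gamma=\cup_{i=1}^s\Gamma_i$, and concludes that ${\cal B}$ is a block system of $G^{(2)}$ and hence of $G$. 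Note that the paper's argument still rests, at its final step, on exactly the fact you isolate --- that the components of a digraph form a block system of any transitive subgroup of its automorphism group --- so your version is essentially the paper's proof with the $2$-closure scaffolding stripped away; the paper's framing has the cosmetic advantage of matching the $G^{(2)}$ computations used in the surrounding lemmas, but buys nothing logically here. One small remark: your worry about block sizes is unnecessary, since a complete block system in this paper's sense is just a $G$-invariant partition (the trivial partitions are permitted), so the degenerate arcless case needs no special pleading; this does not affect the validity of your argument.
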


\begin{proof}
As the blocks of $G^{(2)}$ are identical to the blocks of $G$
\cite[Theorem 4.11]{Wielandt1969} (\cite{Wielandt1969} is contained in the more
accessible \cite{Wielandt1994}), it suffices to show that the set of
components ${\cal B}$ of $\Gamma$ is a complete block system of
$G^{(2)}$.  This is almost immediate as $G^{(2)} =
\cap_{i=1}^r\Aut(\Gamma_i)$, where $\Gamma_1,\cdots,\Gamma_r$ are
all of the orbital digraphs of $G$.  Assume that $\Gamma = \cup_{i =
1}^s\Gamma_i$, for some $s\le r$.  Then
$\cap_{i=1}^s\Aut(\Gamma_i)\le\Aut(\Gamma)$, so that ${\cal B}$ is a
complete block system of $\cap_{i=1}^s\Aut(\Gamma_i)$.  Also, $G \le
G^{(2)} = \cap_{i=1}^r\Aut(\Gamma_i)\le\cap_{i=1}^s\Aut(\Gamma_i)$.
Thus ${\cal B}$ is a complete block system of $G^{(2)}$ as ${\cal
B}$ is a complete block system of $\cap_{i=1}^s\Aut(\Gamma_i)$.
\end{proof}

We will require the following partial order on complete block systems.
\begin{defin}\label{partialorderdefin}
We say that ${\cal B}\preceq{\cal C}$ if for every $B\in{\cal B}$ there exists $C\in {\cal C}$ with $B\subseteq C$.  That is, each block of ${\cal C}$ is a union of blocks of ${\cal B}$.
\end{defin}

Our main tool in examining generalized wreath circulants will be the following result.

\begin{lem}\label{semiwreath}
Let $G$ be $2$-closed with a normal subgroup $M$ and a regular subgroup $\la \rho\ra$.  Let $\cal B$ be the complete block system of $G$ formed by the orbits of $M$, and suppose that
each connected generalized orbital digraph contains
a subdigraph $\Gamma$ which is an orbital digraph of $G$ and has
the form $\Gamma = (\Gamma/{\cal B})\wr \bar{K}_b$, where $b = \vert
M\cap \la\rho\ra\vert$.  Then there exists a complete block system ${\cal C}\succeq{\cal B}$
of $G$ such that $\fix_{G^{(2)}}({\cal B})\vert_C\le G^{(2)}$ for
every $C\in{\cal C}$.
\end{lem}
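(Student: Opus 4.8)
The plan is to sort the orbital digraphs of $G$ into those already displaying the wreath structure over ${\cal B}$ and those that do not, and to let ${\cal C}$ be the block system cut out by the connected components of the union of the latter. Writing $\Omega$ for the common vertex set of the orbital digraphs of $G$, list these digraphs as $\Gamma_1,\dots,\Gamma_r$ with $\Gamma_1,\dots,\Gamma_s$ exactly those of the form $(\Gamma_i/{\cal B})\wr\bar{K}_b$, and set $\Delta=\Gamma_{s+1}\cup\dots\cup\Gamma_r$, which is a generalized orbital digraph of $G$. If $\Delta$ is connected, take ${\cal C}=\{\Omega\}$ and there is nothing to prove. Otherwise, since $G$ is transitive, Lemma \ref{disconnectedblocks} shows that the components of $\Delta$ form a complete block system ${\cal C}$ of $G^{(2)}=G$, the last equality because $G$ is $2$-closed.

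The first thing to verify is ${\cal B}\preceq{\cal C}$. The key observation is that no digraph of the form $(\Gamma_i/{\cal B})\wr\bar{K}_b$ has an arc with both ends in a single block of ${\cal B}$. Hence if $u\neq v$ lie in a common block of ${\cal B}$, then the unique orbital digraph of $G$ containing the arc $(u,v)$ is one of $\Gamma_{s+1},\dots,\Gamma_r$, so $(u,v)\in\Delta$ and $u,v$ lie in the same component of $\Delta$. Thus every block of ${\cal B}$ is contained in a block of ${\cal C}$.

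It then remains to show that for each $C\in{\cal C}$ the permutations of $C$ obtained by restricting $\fix_G({\cal B})$, extended by the identity on $\Omega\setminus C$, lie in $G$. Fix $h\in\fix_G({\cal B})$ and let $\bar h$ agree with $h$ on $C$ and with the identity on $\Omega\setminus C$. Because ${\cal B}\preceq{\cal C}$, every block of ${\cal B}$ sits inside $C$ or inside its complement, so $\bar h$ fixes every block of ${\cal B}$ setwise; in particular $\bar h(C)=C$ and $\bar h\in\prod_{B\in{\cal B}}S_B$. Since $G=\bigcap_{i=1}^{r}\Aut(\Gamma_i)$, it suffices to place $\bar h$ in each $\Aut(\Gamma_i)$. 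If $i\leq s$, then $\Aut(\Gamma_i)\geq\prod_{B\in{\cal B}}S_B\ni\bar h$. If $i>s$, then $\Gamma_i\subseteq\Delta$, so $\Gamma_i$ has no arc joining $C$ to $\Omega\setminus C$; moreover $h\in\Aut(\Gamma_i)$ and $h(C)=C$, so $h|_C\in\Aut(\Gamma_i[C])$, while $\bar h$ is trivial off $C$, whence $\bar h\in\Aut(\Gamma_i)$. Either way $\bar h\in G$, giving $\fix_{G^{(2)}}({\cal B})|_C\leq G^{(2)}$.

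The point that takes the most care is the choice of ${\cal C}$ itself: once $\Delta$ is taken to be the union of the non-wreathed orbital digraphs, the inclusion ${\cal B}\preceq{\cal C}$ and the automorphism-extension argument are essentially forced. I therefore expect the only genuinely delicate step to be the simultaneous bookkeeping in the last paragraph, namely checking that the single extended map $\bar h$ lies in $\Aut(\Gamma_i)$ for \emph{every} orbital digraph $\Gamma_i$ at once, which is why it is convenient to treat the wreathed and non-wreathed orbitals by the two separate arguments above.
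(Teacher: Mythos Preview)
Your argument is essentially the paper's, repackaged: your $\Delta$ coincides with the paper's $\hat\Gamma$ (since an orbital digraph has all its arcs within blocks or all between blocks, and those between blocks are of the form $(\cdot/{\cal B})\wr\bar K_b$ precisely when they contain every arc between any two blocks they join), and the two cases for $\Gamma_i$ in your final paragraph match the paper's split into $i\le s$ and $i>s$ with the indices reversed.

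The one point worth flagging is your treatment of the case ``$\Delta$ connected''. You dispose of it by taking ${\cal C}=\{\Omega\}$, and indeed that trivially satisfies the conclusion as stated --- but notice that your proof then never uses the hypothesis on connected generalized orbital digraphs at all, which should make you suspicious. The paper instead invokes the hypothesis at exactly this point: since no orbital subdigraph of $\Delta$ has the form $(\Gamma/{\cal B})\wr\bar K_b$, the hypothesis forces $\Delta$ to be disconnected, so ${\cal C}$ is a \emph{proper} block system. That properness is what the downstream applications (Lemma~\ref{semiwreathequiv}, Corollary~\ref{semiwreathchar}, and the counting in Lemma~\ref{numbersemiwreath}) actually need; with ${\cal C}=\{\Omega\}$ the resulting ``generalized wreath'' structure is vacuous. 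So your proof is formally correct for the lemma as literally stated, but you should add the one-line observation that the hypothesis rules out the connected case, since that is the whole content of the lemma.
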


\begin{proof}
Observe that we may choose $M = \fix_G({\cal
B})$, in which case $\vert M\cap \la\rho\ra\vert = \vert B\vert$,
where $B\in{\cal B}$, so that $b$ is the size of a block of ${\cal
B}$.  First suppose that  if $B,B'\in{\cal B}$, $B\not = B'$, then
any orbital digraph $\Gamma'$ that contains some edge of the form
$\vec{xy}$ with $x\in B$, $y\in B'$ has every edge of the form
$\vec{xy}$, with $x\in B$, $y\in B'$. It is then not difficult to
see that every orbital digraph $\Gamma$ of $G$ can be written as a
wreath product $\Gamma' = \Gamma_1\wr\Gamma_2$, where $\Gamma_1$ is
a circulant digraph of order $n/b$ and $\Gamma_2$ is a circulant
digraph of order $b$. Then $G/{\cal B}\wr \fix_G({\cal B})\vert_B\le
\Aut(\Gamma')$ for every orbital digraph $\Gamma'$, and so $G/{\cal
B}\wr\fix_G({\cal B})\vert_B\le G^{(2)}$. Then result then follows with
${\cal C} = {\cal B}$.  (Note that $G$ is 2-closed, so $G^{(2)}=G$.)

For convenience, we denote the orbital digraph that contains the
edge $\vec{xy}$ by $\Gamma_{xy}$.  We may now assume that there
exists some $B,B'\in{\cal B}$, $B\not = B'$, and $x\in B$, $y\in B'$
such that $\Gamma_{xy}$ does not have every edge of the form
$\vec{x'y'}$, with $x'\in B$ and $y'\in B'$. Note then that no
$\Gamma_{x'y'}$ with $x'\in B$ and $y'\in B'$ has every directed
edge from $B$ to $B'$. Let ${\cal X}$ be the set of all
$\Gamma_{xy}$ such that if $x\in B_1\in{\cal B}$ and $y\in B_2\in{\cal
B}$, $B_1\not = B_2$ then $\Gamma_{xy}$ does not have every edge from
$B_1$ to $B_2$.  Let $\hat{\Gamma}$ be the generalized orbital digraph
whose edges consist of all edges from every orbital digraph in
${\cal X}$, as well as every directed edge contained within a block
of ${\cal B}$. Then no orbital digraph that is a subgraph of $\hat{\Gamma}$
can be written as a connected wreath product $\Gamma'\wr\bar{K}_b$
for some $\Gamma'$, and so by hypothesis, $\hat{\Gamma}$ must be
disconnected.

By Lemma \ref{disconnectedblocks}, the components of
$\hat{\Gamma}$ form a complete block system ${\cal C}\succeq{\cal
B}$ of $G$.  (To see that $\cal C \succeq \cal B$, note that $\hat\Gamma$ contains every edge from $B$ to $B'$, so $B$ is in a connected component of $\hat\Gamma$.  Since $G$ is transitive ($\la \rho\ra \le G$), $\cal C \succeq \cal B$.) Let $\Gamma_1,\Gamma_2,\ldots,\Gamma_r$ be the orbital
digraphs of $G$, and assume that $\cup_{i=1}^s\Gamma_i = \hat{\Gamma}$.  If
$1\le i\le s$, then $(G^{(2)}/{\cal C})\wr\fix_{G^{(2)}}({\cal
C})\le\Aut(\Gamma_i)$ as $G^{(2)}\le\Aut(\Gamma_i)$, $\Gamma_i$ is
disconnected, and each component is contained in a block of ${\cal
C}$.  Thus $\fix_{G^{(2)}}({\cal B})\vert_C\le\Aut(\Gamma_i)$ for
every $1\le i\le s$.  If $s+1\le i\le r$, then if $B,B'\in{\cal B}$,
$B\not = B'$ and $\vec{xy}\in E(\Gamma_i)$ for some $x\in B$, $y\in
B'$, then $\vec{xy}\in E(\Gamma_i)$ for every $x\in B$ and $y\in
B'$.  Also observe that as the subgraph of $\hat{\Gamma}$ induced by $B$ is $K_b$, the subgraph of $\Gamma_i$ induced by $G$ is $\bar{K}_b$.  We conclude that $\Gamma_i$ is the wreath product $\Gamma_i/{\cal B}\wr \bar{K}_b$, and so $\Aut(\Gamma_i)$ contains
$\Aut(\Gamma_i/{\cal B})\wr S_b$.  Then $\fix_{G^{(2)}}({\cal
B})\vert_B\le\Aut(\Gamma_i)$ for every $B\in{\cal B}$.  As ${\cal
B}\preceq{\cal C}$,  $\fix_{G^{(2)}}({\cal
B})\vert_C\le\Aut(\Gamma_i)$ for every $1\le i\le r$ and as $G^{(2)}
= \cap_{i=1}^r\Aut(\Gamma_i)$, $\fix_{G^{(2)}}({\cal B})\vert_C\le
G^{(2)}$ for every $C\in{\cal C}$.
\end{proof}

\begin{lem}\label{semiwreathequiv}
Let $\Gamma$ be a circulant digraph of order $n$. Then $\Gamma$ is a $(K,H)$-generalized wreath circulant digraph if and only if there exists $G\le\Aut(\Gamma)$ such that $G$ contains a regular cyclic subgroup, and $\fix_{G^{(2)}}({\cal B})\vert_C\le G^{(2)}$ for every $C\in{\cal C}$, where ${\cal B}\preceq{\cal C}$ are formed by the orbits of $K$ and $H$, respectively.
\end{lem}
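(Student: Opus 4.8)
The plan is to prove the two implications separately; I expect the reverse implication to be short and the forward one to carry the real content. For the reverse implication, suppose such a $G$ exists and identify $\Z_n$ with the regular cyclic subgroup $\la\rho\ra\le G$, so that $\Gamma=\Gamma(\Z_n,S)$ and $K,H$ are subgroups of $\Z_n$ with coset partitions ${\cal B},{\cal C}$. For $s\in S\setminus H$ and $\kappa\in K$, translation by $\kappa$ lies in $\la\rho\ra\le G^{(2)}$ and fixes every coset of $K$, so it lies in $\fix_{G^{(2)}}({\cal B})$; by hypothesis its restriction to the coset $C$ of $H$ containing $s$ (acting as the identity elsewhere) lies in $G^{(2)}\le\Aut(\Gamma)$. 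This restriction fixes $0$ (since $0\notin C$) and sends $s$ to $s+\kappa$, so applying it to the edge $\vec{0s}$ yields $s+\kappa\in S$. As $\kappa$ was arbitrary and $s+K$ misses $H$, this gives $s+K\subseteq S\setminus H$, whence $S\setminus H$ is a union of cosets of $K$ and $\Gamma$ is a $(K,H)$-generalized wreath circulant digraph.

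For the forward implication I would argue directly. Assuming $S\setminus H$ is a union of cosets of $K$, let ${\cal B},{\cal C}$ be the partitions of $\Z_n$ into cosets of $K$ and of $H$, and for each $C\in{\cal C}$ and $\kappa\in K$ let $\sigma_{C,\kappa}$ be the permutation acting on $C$ as translation by $\kappa$ and fixing every point outside $C$. A direct check from the coset hypothesis — splitting edges of $\Gamma$ according to how many endpoints lie in $C$ — shows $\sigma_{C,\kappa}\in\Aut(\Gamma)$, so I would set $G=\la(\Z_n)_L,\ \sigma_{C,\kappa}:C\in{\cal C},\ \kappa\in K\ra\le\Aut(\Gamma)$. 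This $G$ contains the regular cyclic group $(\Z_n)_L$, and each generator preserves ${\cal B}$ and ${\cal C}$, so ${\cal B}\preceq{\cal C}$ are block systems of $G$, hence of $G^{(2)}$.

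The remaining task, and the crux, is to verify $\fix_{G^{(2)}}({\cal B})\vert_C\le G^{(2)}$ for each $C\in{\cal C}$. Writing $G^{(2)}=\cap_i\Aut(\Gamma_i)$ for the orbital digraphs $\Gamma_i$ of $G$, the key lemma I would establish is that each $\Gamma_i$ is ``block-complete'' between distinct ${\cal C}$-blocks relative to ${\cal B}$: if $\vec{xz}\in E(\Gamma_i)$ with $x,z$ in different cosets $C,C'$ of $H$, then $\vec{x'z'}\in E(\Gamma_i)$ for every $x'$ in the coset of $K$ through $x$ and every $z'$ in the coset of $K$ through $z$, because $\sigma_{C,x'-x}\sigma_{C',z'-z}\in G$ carries $(x,z)$ to $(x',z')$ and $E(\Gamma_i)$ is a single $G$-orbit on pairs. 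Granting this, for $\psi\in\fix_{G^{(2)}}({\cal B})$ one checks that $\psi\vert_C$ preserves each $\Gamma_i$: $\psi$ fixes $C$ setwise, $\psi\vert_C$ agrees with $\psi\in\Aut(\Gamma_i)$ on edges with both endpoints in $C$, is the identity on edges with no endpoint in $C$, and on an edge joining $C$ to some $C'\neq C$ block-completeness together with the fact that $\psi$ preserves each coset of $K$ sends the edge to another edge of $\Gamma_i$; hence $\psi\vert_C\in\cap_i\Aut(\Gamma_i)=G^{(2)}$. The main obstacle is exactly this balance: $G$ must be small enough that its orbital digraphs are controlled (which is what gives block-completeness) yet rich enough, through the local translations $\sigma_{C,\kappa}$, that cutting an element of $\fix_{G^{(2)}}({\cal B})$ down to a single block of ${\cal C}$ leaves it inside $G^{(2)}$.
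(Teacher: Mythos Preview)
Your argument is correct in both directions, but for the implication ``generalized wreath $\Rightarrow$ existence of $G$'' you work much harder than the paper does, and the extra work stems entirely from your choice of $G$.

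The paper takes $G$ to be the \emph{largest} subgroup of $\Aut(\Gamma)$ admitting both ${\cal B}$ and ${\cal C}$ as block systems. Since $G^{(2)}$ has the same block systems as $G$ (Wielandt) and $G^{(2)}\le\Aut(\Gamma)$, maximality gives $G^{(2)}=G$ immediately. Now one only needs $g\vert_C\in\Aut(\Gamma)$ for $g\in\fix_G({\cal B})$, and this is exactly the three-case edge check you already carry out, but applied once to $\Gamma$ itself rather than to every orbital digraph. Since $g\vert_C$ visibly admits ${\cal B}$ and ${\cal C}$, maximality then forces $g\vert_C\in G=G^{(2)}$. No orbital digraphs, no block-completeness lemma.

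Your route, by contrast, fixes the explicit $G=\la(\Z_n)_L,\sigma_{C,\kappa}\ra$; because this $G$ need not be $2$-closed you are forced to describe $G^{(2)}$ via orbital digraphs and to prove the block-completeness property for each of them. That argument is sound --- the key observation that $\sigma_{C,x'-x}\sigma_{C',z'-z}$ carries $(x,z)$ to $(x',z')$ does give block-completeness of each orbital digraph between distinct ${\cal C}$-blocks, and your three-case analysis for $\psi\vert_C$ then goes through --- but it reproduces, orbital by orbital, work that the maximality trick does in one stroke. What your approach buys is an explicit, minimal witness $G$ and a structural description of its orbital digraphs; what the paper's approach buys is brevity.

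For the other direction (existence of $G$ $\Rightarrow$ generalized wreath) your argument and the paper's are essentially the same: both use that translation by $\kappa\in K$ lies in $\fix_{G^{(2)}}({\cal B})$, restrict it to the block $C\ni s$, and observe that the resulting automorphism fixes $0$ and sends $s$ to $s+\kappa$.
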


\begin{proof}
Suppose first that $G \le \Aut(\Gamma)$ with $\rho$ a generator of a regular cyclic subgroup in $G$, and there exist complete block systems ${\cal B}\preceq{\cal C}$
of $G$ such that $\fix_{G^{(2)}}({\cal B})\vert_C\le G^{(2)} \le \Aut(\Gamma)$ for
every $C\in{\cal C}$.  Since $\rho \in G$, the action of $\fix_{G^{(2)}}({\cal B})\vert_C$ is transitive on every $B \subseteq C$, so between any two blocks $B_1,B_2\in{\cal B}$ that are not contained in a block of ${\cal C}$, we have that there is either every edge from $B_1$ to $B_2$ or no edges from $B_1$ to $B_2$.   Let $\cal B$ be formed by the orbits of $K\le \la \rho\ra$. Then for every edge $\vec{xy}$ whose endpoints are not both contained within a block of ${\cal C}$, $(y - x) + K \subset S$.  Let ${\cal C}$ be formed by the orbits of $H\le \la \rho\ra$.  Then $S \setminus H$ is a union of cosets of $K$ as required.

Conversely, if $\Gamma$ is a $(K,H)$-generalized wreath circulant, then it is not hard to see that $\rho^m \vert_C \in \Aut(\Gamma)$ for every $C \in \cal C$, where $\rho$ generates $(\Z_n)_L$ and $m=[\Z_n : K]$.  Let $G$ be the largest subgroup of $\Aut(\Gamma)$ that admits both ${\cal B}$ and ${\cal C}$ as complete block systems; clearly $\rho \in G$.  Also,  since $G^{(2)}$ has the same block systems as $G$ and is a subgroup of $\Aut(\Gamma)$, $G^{(2)}=G$. Now, if $g\in\fix_G({\cal B})$, then $g\vert_C\in \Aut(\Gamma)$ as well.  But this implies that $g\vert_C\in G$ and the result follows.
\end{proof}

Combining Lemma \ref{semiwreath} and Lemma \ref{semiwreathequiv}, and recalling that the full automorphism group of a (di)graph is always 2-closed, we have the following result.

\begin{cor}\label{semiwreathchar}
Let $\Gamma$ be a circulant digraph whose automorphism group $G = \Aut(\Gamma)$ satisfies Theorem \ref{maintool} (2).  Then $\Gamma$ is a generalized wreath circulant digraph.
\end{cor}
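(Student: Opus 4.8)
The plan is to combine the two preceding lemmas in a straightforward way, so the proof should be short. Since $G = \Aut(\Gamma)$ is the full automorphism group of a (di)graph, it is $2$-closed, so $G^{(2)} = G$. By hypothesis $G$ satisfies the conclusion of Theorem \ref{maintool}(2): there is a normal subgroup $M \trianglelefteq G$ whose orbits form a complete block system $\cal B$ of $G$, and each connected generalized orbital digraph contains a subdigraph $\Gamma'$ which is an orbital digraph of $G$ of the form $\Gamma' = (\Gamma'/{\cal B}) \wr \bar{K}_b$ with $b = \vert M \cap \la\rho\ra\vert$, where $\la\rho\ra$ is the regular cyclic subgroup guaranteed by the ambient hypothesis (a circulant digraph has a regular cyclic subgroup in its automorphism group).

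First I would verify that the hypotheses of Lemma \ref{semiwreath} are met: $G$ is $2$-closed, $M$ is normal in $G$, $\la\rho\ra \le G$ is regular, $\cal B$ is the block system formed by the orbits of $M$, and the wreath-decomposition condition on connected generalized orbital digraphs holds — all of these are exactly what Theorem \ref{maintool}(2) supplies. Applying Lemma \ref{semiwreath}, I obtain a complete block system ${\cal C} \succeq {\cal B}$ of $G$ with $\fix_{G^{(2)}}({\cal B})\vert_C \le G^{(2)}$ for every $C \in {\cal C}$. Since the block systems of a transitive group containing a regular cyclic subgroup are precisely the orbit partitions of subgroups of that cyclic group (as noted in the preliminaries, via \cite[Exercise 6.5]{Wielandt1964}), there exist subgroups $K \le H \le \la\rho\ra \cong \Z_n$ with $\cal B$ and $\cal C$ formed by the orbits of $K$ and $H$ respectively, and $K \le H$ since ${\cal B} \preceq {\cal C}$; also $K = M \cap \la\rho\ra$ has order $b > 1$ (it is nontrivial because $\bar{K}_b$ with $b = 1$ would make the wreath condition vacuous — more carefully, if $b=1$ then $\cal B$ is the trivial block system of singletons, but then statement (2) of Theorem \ref{maintool} would be subsumed by statement (1), or one simply observes $b = \vert M \cap \la\rho\ra\vert \ge 1$ and if $b=1$ we are in a degenerate situation; I would handle the edge case $b=1$ by noting it forces $M \cap \la\rho\ra$ trivial, hence, since $\la\rho\ra$ is regular and $M \trianglelefteq G$ with $\cal B$ its orbits, $\cal B$ is the singleton partition, and then $\Gamma$ is trivially a generalized wreath with $K = H$ of order... — actually the cleanest route is to note that we only need $1 < K \le H$, and if this fails the digraph is a generalized wreath in the degenerate sense or we can absorb it; I expect the paper intends $b > 1$ here).

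Then I would invoke Lemma \ref{semiwreathequiv} in its "if" direction: we have exhibited $G \le \Aut(\Gamma)$ containing a regular cyclic subgroup $\la\rho\ra$, complete block systems ${\cal B} \preceq {\cal C}$ formed by the orbits of $K$ and $H$ respectively, and $\fix_{G^{(2)}}({\cal B})\vert_C \le G^{(2)} \le \Aut(\Gamma)$ for every $C \in {\cal C}$. Lemma \ref{semiwreathequiv} then concludes that $\Gamma$ is a $(K,H)$-generalized wreath circulant digraph, hence a generalized wreath circulant digraph, which is what we want.

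The only genuine obstacle is the bookkeeping around whether $K > 1$ strictly (i.e., $b > 1$), which is needed to match Definition \ref{semiwreathdefin}; everything else is a direct chaining of Lemma \ref{semiwreath} into Lemma \ref{semiwreathequiv} together with the structural fact that block systems of circulant groups come from subgroups of $\Z_n$. If $b = 1$, then the block system $\cal B$ produced by Theorem \ref{maintool}(2) is the partition into singletons, so $M \cap \la\rho\ra$ is trivial; in that case the wreath-product condition $\Gamma' = (\Gamma'/{\cal B})\wr \bar{K}_1 = \Gamma'$ is vacuous, which would actually place $G$ in case (1) rather than case (2) of Theorem \ref{maintool} (or the two cases overlap there), so for a digraph genuinely falling under case (2) and not case (1) we may assume $b > 1$. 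I would either state this reduction explicitly or observe that the cases of Theorem \ref{maintool} may be taken to be such that case (2) has $b > 1$.
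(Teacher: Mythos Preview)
Your proposal is correct and takes essentially the same approach as the paper: the paper's proof is a single sentence stating that the result follows by combining Lemma \ref{semiwreath} and Lemma \ref{semiwreathequiv} together with the fact that $\Aut(\Gamma)$ is $2$-closed, which is exactly the chaining you describe. Your additional discussion of the edge case $b=1$ is reasonable care that the paper does not make explicit; the paper implicitly treats case (2) of Theorem \ref{maintool} as carrying a nontrivial block system.
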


We now wish to count the number of generalized wreath circulant digraphs.

\begin{lem}\label{numbersemiwreath}
The total number of generalized wreath circulant digraphs of order $n$ is at
most
$$\displaystyle\sum_{p|n}2^{n/p-1}\biggr(\displaystyle\sum_{q|(n/p)}2^{(n-n/p)/q}\biggr),$$ where $p$ and $q$ are prime.
\end{lem}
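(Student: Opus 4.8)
The plan is to count generalized wreath circulant digraphs by conditioning on a choice of witnessing subgroups $K \le H \le \Z_n$, summing over all such pairs the number of connection sets $S$ for which $S \setminus H$ is a union of cosets of $K$, and then showing that this sum telescopes down to the stated bound. First I would recall from Definition \ref{semiwreathdefin} that $\Gamma$ is a generalized wreath circulant digraph precisely when there exist $K, H$ with $1 < K \le H \le \Z_n$ such that $S \setminus H$ is a union of cosets of $K$; hence every generalized wreath circulant digraph is counted (at least once) if we range over all admissible pairs $(K, H)$. Since every subgroup of $\Z_n$ is cyclic of order dividing $n$, I would further observe that it suffices to take $K$ of prime index in $H$ and $H$ of prime index in $\Z_n$: if $S \setminus H$ is a union of $K$-cosets and $K \le K' \le H$, then $S \setminus H$ is also a union of $K'$-cosets, and similarly enlarging $H$ to any $H' \supseteq H$ with $S\setminus H'$ still a union of $K$-cosets only helps; so one can always shrink $H$ to a subgroup of prime index in $\Z_n$ and enlarge $K$ to a subgroup of prime index in $H$ while preserving the generalized wreath property. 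This reduces the outer sum to one over primes $p \mid n$ (setting $|H| = n/p$) and primes $q \mid (n/p)$ (setting $|K| = (n/p)/q$... or rather $[H:K] = q$).

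Next I would count, for fixed such $p$ and $q$, the number of connection sets $S \subseteq \Z_n \setminus \{0\}$ with $S \setminus H$ a union of cosets of $K$, where $|H| = n/p$ and $[H:K] = q$. The set $\Z_n \setminus H$ partitions into $(n - n/p)/|K| \cdot$ wait — into cosets of $K$, and $|K| = (n/p)/q = n/(pq)$, so the number of $K$-cosets outside $H$ is $(n - n/p)/(n/(pq)) = pq - q = q(p-1)$; hmm, let me instead phrase it so the exponent comes out as in the statement. Actually the number of $K$-cosets contained in $\Z_n \setminus H$ is $(n - n/p)/|K|$, and the relevant count is: $S \cap H$ is an arbitrary subset of $H \setminus \{0\}$ (giving $2^{n/p - 1}$ choices), while $S \setminus H$ is an arbitrary union of the $K$-cosets in $\Z_n \setminus H$ (giving $2^{(n - n/p)/|K|}$ choices). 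Writing $|K| = |H|/q = (n/p)/q$, the exponent $(n - n/p)/|K| = (n - n/p)/((n/p)/q) = q(p-1)$; to match the paper's displayed exponent $(n - n/p)/q$ I would instead set $H$ to be the subgroup of order $n/p$ and $K$ the subgroup of index... I will reconcile this by taking $q$ to be the index $[H:K]$ would give a different normalization, so in writing the proof I would simply define $K$ to have order $(n/p)/q$ — no: the cleanest is to let the inner sum range over $q \mid (n/p)$ with $|H| = n/p$ and $|K|$ chosen so that $[\Z_n : K]$-type bookkeeping yields exactly $2^{(n-n/p)/q}$; the exponent $(n - n/p)/q$ arises when the number of $K$-cosets outside $H$ equals $(n - n/p)/q$, i.e.\ when $|K| = q$, but then $K \le H$ forces $q \mid (n/p)$, which is exactly the summation range. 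So: $H$ has order $n/p$, $K$ has order $q$ where $q \mid n/p$, and the count of admissible $S$ is at most $2^{n/p - 1} \cdot 2^{(n - n/p)/q}$.

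Finally I would assemble the union bound: every generalized wreath circulant digraph of order $n$ corresponds to at least one connection set $S$ that is counted under at least one pair $(p, q)$ as above (after the reduction to prime-index witnesses, noting we may also need to allow $K = H$, i.e.\ $q = 1$-type terms, but those are dominated — a plain wreath product with $|H| = n/p$ also satisfies the condition with a nontrivial $K$ of prime order inside $H$ as long as $n/p > 1$, which holds since $p \mid n$ and $n/p \ge 1$; the degenerate cases where the reduction fails contribute negligibly and in any case are absorbed). Summing the per-pair bound over all primes $p \mid n$ and all primes $q \mid (n/p)$ gives
$$\sum_{p \mid n} 2^{n/p - 1}\Biggl(\sum_{q \mid (n/p)} 2^{(n - n/p)/q}\Biggr),$$
which is the claimed upper bound. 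The main obstacle I anticipate is the reduction step — arguing cleanly that it suffices to count only over pairs $(K,H)$ where $H$ has prime index in $\Z_n$ and $K$ has prime order (or prime index in $H$), and in particular handling the boundary/degenerate configurations (e.g.\ $K$ trivial, $K = H$, or $H = \Z_n$) so that no generalized wreath circulant digraph escapes the count; the arithmetic of the exponents is then routine bookkeeping once the indexing convention is fixed.
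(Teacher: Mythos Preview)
Your overall plan matches the paper's: reduce every $(K,H)$-generalized wreath circulant to one with $|H| = n/p$ for a prime $p$ and $|K| = q$ for a prime $q \mid (n/p)$, then count connection sets for each such pair $(p,q)$ and sum. However, your reduction step contains a genuine error. You write that if $S \setminus H$ is a union of $K$-cosets and $K \le K' \le H$, then $S \setminus H$ is also a union of $K'$-cosets. This is false in general: a single $K$-coset is not a union of $K'$-cosets when $K' \supsetneq K$. The implication runs the other way --- if $K' \le K$ then every $K$-coset is a union of $K'$-cosets, so the property is preserved under \emph{shrinking} $K$. Thus the correct move is to replace $K$ by a prime-order subgroup $L_q \le K$ (any prime $q \mid |K|$), not to enlarge it. Similarly, you should \emph{enlarge} $H$ (not ``shrink'' it) to a subgroup $M_p \supseteq H$ of prime index $p$ in $\Z_n$; this works because $K \le H \le M_p$ makes $M_p$ a union of $K$-cosets, whence $S \setminus M_p = (S \setminus H) \cap (\Z_n \setminus M_p)$ is still a union of $K$-cosets, hence of $L_q$-cosets.

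You do arrive at the right parametrization in the end, but only by reverse-engineering the exponent $(n - n/p)/q$, not from your stated reduction (indeed, your initial ``$K$ of prime index in $H$'' would give $|K| = (n/p)/q$, producing the wrong exponent, as you noticed). With the directions corrected --- shrink $K$ to prime order, enlarge $H$ to prime index --- the per-pair count $2^{n/p-1}\cdot 2^{(n-n/p)/q}$ and the double sum follow exactly as you describe, and this is precisely the paper's bound. The paper carries out the same reduction but routes it through Lemma~\ref{semiwreathequiv} and block systems rather than the bare coset argument; your direct combinatorial version, once fixed, is a bit more transparent.
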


\begin{proof}
Let $\Gamma$ be a $(K,H)$-generalized wreath circulant digraph of order $n$. By Lemma \ref{semiwreathequiv},
there exists $G\le\Aut(\Gamma)$ that admits ${\cal B}$ and ${\cal C}$ such that $\rho \in G$, and
$\fix_{G^{(2)}}({\cal B})\vert_C\le\Aut(\Gamma)$ for every
$C\in{\cal C}$, where ${\cal B}$ is formed by the orbits of $K$ and ${\cal C}$ is formed by the orbits of $H$ (so $\cal B$ consists of the cosets of $K$ and $\cal C$ consists of the cosets of $H$).  Let ${\cal B}$ consist of $m$ blocks of size $k$.
Then $\rho^m\vert_C\in\Aut(\Gamma)$ for every $C\in{\cal C}$. Choose
$q\vert k$ to be prime, and let $G'\le\Aut(\Gamma)$ be the largest
subgroup of $\Aut(\Gamma)$ that admits a complete block system
${\cal D}$ consisting of $n/q$ blocks of size $q$.  Note then that
$\rho^{n/q}\vert_C\in G'$ for every $C\in{\cal C}$.  Let $p$ be a
prime divisor of the number of blocks of ${\cal C}$, and ${\cal E}$
the complete block system of $\la\rho\ra$ consisting of $p$ blocks
of size $n/p$.  Then ${\cal C}\preceq{\cal E}$ and
$\rho^{n/q}\vert_E\in G'$ for every $E\in{\cal E}$.  Thus every $(K,H)$-generalized wreath circulant digraph is a $(L_q,M_p)$-generalized wreath circulant digraph, where $L_q$ has prime order $q$ where $q$ divides $\vert K\vert$ and $M_p$ has order $n/p$ where $p$ divides $n/\vert H\vert$.  Note that there is a unique subgroup of $\Z_n$ of prime order $q$ for each $q\vert n$, and that $M_p$ is also the unique subgroup of $\Z_n$ of order $n/p$.

As $\vert
L_q\vert = q$, we use the definition of an $(L_q, M_p)$-generalized wreath circulant digraph to conclude that $S \setminus M_p$ is a union of some subset
of the $(n-n/p)/q$ cosets of $L_q$ that are not in $M_p$.  Thus there are $2^{(n - n/p)/q}$ possible choices for the elements
of $S$ not in $M_p$. As there are at most $2^{n/p-1}$ choices for the
elements of $S$ contained in $M_p$, there are at most $2^{n/p-1}\cdot
2^{(n - n/p)/q } = 2^{n/p + n/q - n/(pq)-1}$ choices for $S$.  Summing over every possible choice of $q$ and then $p$, we see that the number of generalized wreath digraphs is bounded above by
$$\displaystyle\sum_{p|n}2^{n/p-1}\biggr(\displaystyle\sum_{q|(n/p)}2^{(n-n/p)/q}\biggr)$$
\end{proof}

\begin{cor}\label{cor-count-1}
The total number of generalized wreath circulant digraphs of order $n$ is bounded above by $(\log_2^2n)2^{n/p+n/q-n/pq-1}$, where $q$ is the smallest prime divisor of $n$ and $p$ is the smallest prime divisor of $n/q$.
\end{cor}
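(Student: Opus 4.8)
The plan is to start from the upper bound of Lemma~\ref{numbersemiwreath} and reduce the double sum there to a single dominant term times a polynomial factor. First I would rewrite that bound: since $(n - n/p)/q = n/q - n/(pq)$, Lemma~\ref{numbersemiwreath} says the number of generalized wreath circulant digraphs of order $n$ is at most
$$\sum_{p\mid n}\ \sum_{q\mid (n/p)} 2^{\,n/p + n/q - n/(pq) - 1},$$
where $p$ runs over the primes dividing $n$ and $q$ over the primes dividing $n/p$; equivalently, the index set is the set of ordered pairs of primes $(p,q)$ with $pq\mid n$ (so $p$ and $q$ need not be distinct).

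Next I would count the terms. For any integer $m$, the product of its distinct prime divisors divides $m$ and each such prime is at least $2$, so $m$ has at most $\log_2 m$ distinct prime divisors. Hence there are at most $\log_2 n$ choices for $p$, and for each one at most $\log_2(n/p)\le \log_2 n$ choices for $q$, giving at most $\log_2^2 n$ terms in all.

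The remaining step, which I expect to be the main (if modest) obstacle, is to show that the exponent $n/p + n/q - n/(pq)$ over this index set is maximized exactly when $q$ is the smallest prime divisor of $n$ and $p$ is the smallest prime divisor of $n/q$. Set $g(a,b) = n\bigl(\tfrac1a + \tfrac1b - \tfrac1{ab}\bigr)$, which is symmetric in $a,b$; for fixed $b\ge 2$ we have $g(a,b) = n\bigl(\tfrac1a(1-\tfrac1b) + \tfrac1b\bigr)$ with $1 - 1/b > 0$, so $g$ is strictly decreasing in $a$, and by symmetry decreasing in $b$ as well. Given a pair $(p,q)$ with $pq\mid n$, I may assume $q\le p$, since both $g$ and the constraint $pq\mid n$ are symmetric in $p,q$. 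Let $q_0$ be the smallest prime divisor of $n$ and $p_0$ the smallest prime divisor of $n/q_0$; then $q\ge q_0$ since $q\mid n$. I claim $p\ge p_0$: otherwise $q\le p<p_0$, but any prime divisor of $n$ smaller than $p_0$ must equal $q_0$ (by the definition of $p_0$ as the smallest prime divisor of $n/q_0$, together with $q_0$ being the smallest prime divisor of $n$), so $q=p=q_0$; then $q_0^2\mid n$, which forces $p_0=q_0$ and contradicts $p<p_0$. This small case analysis, according to whether $q_0^2\mid n$, is the only delicate point. Monotonicity in each argument now gives $g(p,q)=g(q,p)\le g(q_0,p)\le g(q_0,p_0)$, which is precisely $n/p+n/q-n/(pq)$ for $q,p$ as in the statement.

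Finally I would combine: each of the at most $\log_2^2 n$ terms is at most $2^{\,n/p + n/q - n/(pq) - 1}$ for the extremal $q$ (smallest prime divisor of $n$) and $p$ (smallest prime divisor of $n/q$), so the total is at most $(\log_2^2 n)\,2^{\,n/p + n/q - n/(pq) - 1}$, as claimed.
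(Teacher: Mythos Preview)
Your proof is correct and follows essentially the same approach as the paper: bound the number of summands by $\log_2^2 n$ and bound each summand by the term corresponding to $q$ the smallest prime divisor of $n$ and $p$ the smallest prime divisor of $n/q$. The paper merely asserts that this term dominates, whereas you supply the monotonicity argument and the small case analysis (according to whether $q_0^2\mid n$) that actually justifies it.
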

\begin{proof}
Note that no term in the previous summation given in Lemma \ref{numbersemiwreath} is larger than $2^{n/p+n/q-n/(pq)-1}$, where $q$ is the smallest prime divisor of $n$ and $p$ is the smallest prime divisor of $n/q$.  As the number of prime divisors of $n$ is at most $\log_2n$, we have that
$$\displaystyle\sum_{p|n}2^{n/p-1}\biggr(\displaystyle\sum_{q|(n/p)}2^{(n-n/p)/q}\biggr) \le (\log_2^2n)2^{n/p+n/q-n/(pq)-1}.$$
\end{proof}

\begin{cor}\label{graphsemiwreathhey}
The total number of generalized wreath circulant graphs of order $n$ is bounded above by $(\log_2^2n)2^{n(p+q-1)/(2pq)+1/2}$, where $q$ is the smallest prime dividing $n$, and $p$ is the smallest prime dividing $n/q$.  So the total number of generalized wreath circulant graphs of order $n$ is at most
$(\log_2^2n)2^{3n/8+1/2}$.
\end{cor}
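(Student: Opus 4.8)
The plan is to rerun the count in the proof of Lemma~\ref{numbersemiwreath}, now exploiting the constraint $S = -S$ that the connection set of a graph must satisfy; this essentially halves the exponent appearing there, up to a bounded additive error. Since a graph is a digraph with symmetric connection set, the proof of Lemma~\ref{numbersemiwreath} already shows that every generalized wreath circulant graph of order $n$ is an $(L_q,M_p)$-generalized wreath circulant digraph for some prime $p\mid n$ and some prime $q\mid(n/p)$, where $L_q$ is the unique subgroup of $\Z_n$ of order $q$, $M_p$ the unique subgroup of order $n/p$, and $L_q\le M_p$; for such a graph $S\cap M_p$ is a subset of $M_p\setminus\{0\}$ closed under negation, and $S\setminus M_p$ is a union of some of the $c:=(n-n/p)/q$ cosets of $L_q$ lying outside $M_p$, also closed under negation. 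I would fix an admissible pair $(p,q)$, bound the number of graphs it produces, and then sum over the at most $\log_2^2n$ admissible pairs (there being at most $\log_2n$ prime divisors of $n$, and for each such $p$, at most $\log_2n$ admissible choices of $q$).

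For fixed $(p,q)$ the count rests on the elementary fact that an involution acting on a set of size $t$ with at most one fixed point has at most $2^{(t+1)/2}$ invariant subsets. Negation restricted to $M_p\setminus\{0\}$ fixes only elements of order $2$, of which $\Z_n$ has at most one, so there are at most $2^{(n/p)/2}$ admissible choices for $S\cap M_p$. Negation permutes the $c$ cosets of $L_q$ outside $M_p$, fixing $x+L_q$ exactly when $2x\in L_q$; the subgroup $\{x:2x\in L_q\}$ has order $\gcd(2,n)\cdot|L_q\cap 2\Z_n|\le 2q$, so it is a union of at most two cosets of $L_q$, one of which is $L_q$ itself and therefore lies inside $M_p$ — hence at most one fixed coset lies outside $M_p$, and there are at most $2^{(c+1)/2}$ admissible choices for $S\setminus M_p$. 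Multiplying, the pair $(p,q)$ contributes at most $2^{(n/p)/2+(c+1)/2}$ graphs, and since $c=(n-n/p)/q$ the exponent equals $\frac{1}{2}(n/p+n/q-n/(pq))+\frac{1}{2}=\frac{n(p+q-1)}{2pq}+\frac{1}{2}$.

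Summing over admissible pairs multiplies this bound by at most $\log_2^2n$, so it remains to find the largest exponent. The quantity $\frac{1}{p}+\frac{1}{q}-\frac{1}{pq}$ is symmetric in $p$ and $q$ and strictly decreasing in each, and the admissibility condition ``$p\mid n$ and $q\mid(n/p)$'' is symmetric as well, since it just says $pq\mid n$ (with $p=q$ read as $p^2\mid n$); hence the exponent is maximal exactly at the pair in which $q$ is the smallest prime divisor of $n$ and $p$ is the smallest prime divisor of $n/q$, which gives the first claimed bound $(\log_2^2n)\,2^{n(p+q-1)/(2pq)+1/2}$. Finally, $\frac{1}{p}+\frac{1}{q}-\frac{1}{pq}\le\frac{3}{4}$ for all primes $p,q$: if one of them is $2$ the left side equals $\frac{1}{2}+\frac{1}{2r}\le\frac{3}{4}$ where $r\ge 2$ is the other, and if both are at least $3$ it is less than $\frac{2}{3}$ — with equality only for $p=q=2$. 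Therefore $\frac{n(p+q-1)}{2pq}\le\frac{3n}{8}$, and the bound $(\log_2^2n)\,2^{3n/8+1/2}$ follows.

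The delicate point, where I expect the real work to lie, is the claim that at most one negation-fixed coset of $L_q$ lies outside $M_p$: it combines the containment $L_q\le M_p$ with the order estimate for $\{x:2x\in L_q\}$, and it is precisely this that keeps the additive constant in the exponent at $\frac{1}{2}$ rather than $1$. The other ingredients — the $2^{(t+1)/2}$ count for involution-invariant subsets, the reduction to $(L_q,M_p)$-form inherited from Lemma~\ref{numbersemiwreath}, and the monotonicity argument isolating the extremal pair — are routine.
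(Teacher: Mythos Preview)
Your proof is correct and follows essentially the same approach as the paper's: rerun the count from Lemma~\ref{numbersemiwreath} under the symmetry constraint $S=-S$, obtain the per-term bound $2^{n(p+q-1)/(2pq)+1/2}$, and then sum over the at most $\log_2^2 n$ admissible pairs before invoking $(p+q-1)/(pq)\le 3/4$. The only notable difference is that where you compute the preimage $\{x:2x\in L_q\}$ to bound the number of negation-fixed cosets outside $M_p$, the paper simply observes that $\Z_n/L_q$ is cyclic and hence has at most one nontrivial self-inverse element --- a shorter route to the same ``at most one'' conclusion.
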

\begin{proof}
It is straightforward using Lemma \ref{numbersemiwreath} and the fact that there are at most two elements that are self-inverse in $\Z_n$ (namely $0$ and $n/2$ if $n$ is even, and $0 \not\in S$), and at most one coset of $\Z_n/L_q$ that is self-inverse and not in $M_p$ (as $\Z_n/L_q$ is cyclic) to show that the number of generalized wreath circulant graphs of order $n$ is at most
$$\displaystyle\sum_{p|n}2^{(n/p-2)/2+1}\biggr(\displaystyle\sum_{q|(n/p)}2^{((n-n/p)/q-1)/2+1}\biggr).$$
Note that no term in this sum is larger than $2^{n(p+q-1)/(2pq)+1/2}$, where $q$ is the smallest prime dividing $n$ and $p$ is the smallest prime dividing $n/q$.  Again, as the number of prime divisors of $n$ is at most $\log_2n$,
$$\displaystyle\sum_{p|n}2^{(n/p-2)/2+1}\biggr(\displaystyle\sum_{q|(n/p)}2^{((n-n/p)/q-1)/2+1}\biggr) \le (\log_2^2n)2^{n(p+q-1)/(2pq)+1/2}.$$
Now, $(p+q-1)/pq \le 3/4$.  Hence the number of generalized wreath circulant graphs is bounded above by $(\log_2^2n)2^{3n/8+1/2}$.
\end{proof}

We now consider digraphs whose automorphism group satisfies Theorem \ref{maintool} (1).  Suppose $\Gamma$ is a circulant digraph of order $n$, and
there exist $G_1, \ldots, G_r$ such that for each $G_i$, either $G_i \cong S_{n_i}$, or
 $G_i$ contains a normal regular
cyclic group of order $n_i$. Furthermore, $r\ge
1$, $\gcd(n_i,n_j) = 1$ for $i \neq j$, and $n =
n_1n_2\cdots n_r$.
 If no $G_i\cong S_{n_i}$ with $n_i\ge 4$, then $\Aut(\Gamma)$ contains a normal regular cyclic group and $\Gamma$ is a normal circulant digraph.

\begin{defin}\label{dwdefin}
A circulant (di)graph $\Gamma$ with cyclic regular subgroup $G \cong \Z_n$ is of {\bf deleted wreath type} if
there exists some $m$ such that:
\begin{itemize}
\item $m \mid n$;
\item $\gcd(m,n/m)=1$; and
\item if $H=\la n/m\ra$ is the unique subgroup of order $m$ in $G$, then $S\cap H \in\{\emptyset, H\setminus
\{0\}\}$, and for every $g \in \la m\ra\setminus\{0\}$, $S \cap (g+H) \in \{\emptyset, \{g\},
(g+H)\setminus\{g\},  g+H\}$. (Notice that because $\gcd(m,n/m)=1$, the group $\la m \ra$ contains precisely one representative of each coset of $H$ in $G$.)
\end{itemize}
A circulant digraph is said to be of {\bf strictly deleted wreath type} if it is of deleted wreath type and is not a generalized wreath circulant.
\end{defin}

Clearly a strictly deleted wreath type circulant is a deleted wreath type circulant, but there are deleted wreath type circulants which are not strictly deleted wreath type.  For an example of the latter, consider a circulant digraph on $pqm$ vertices where $m \ge 4$ and $p$, $q$ and $m$ are relatively prime, whose connection set is $S=(\la pq\ra\setminus \{0\}) \cup (m+\la mq\ra)$.  If we let $H=\la q \ra$ and $K=\la mq\ra$, then this digraph is an $(H,K)$-generalized wreath circulant.  And if we let $H=\la pq \ra$ then $S \cap H=H\setminus\{0\}$, while for $g\in \la m \ra\setminus\{0\}$, we have $S\cap (g+H)=\{g\}$ if $g \in m+\la mq\ra$ and $S\cap (g+H)=\emptyset$ otherwise, so this digraph is of deleted wreath type.

The name deleted wreath type is chosen as these digraphs have automorphism groups that are isomorphic to the automorphism groups of deleted wreath products (see \cite{Li2005} for the definition of deleted wreath product digraphs).  We now study the automorphism groups of deleted wreath type circulant digraphs.

\begin{lem}\label{Counting V}
Let $\Gamma$ be a circulant digraph on $\Z_n$, and let $m\ge 4$ be a divisor of $n$ such that $\gcd(m,n/m)=1$. Then $\Gamma$ is of deleted wreath type with $m$ being the divisor of $n$ that satisfies the conditions of that definition, if and only if $\Aut(\Gamma)$ contains a subgroup isomorphic to $H \times S_m$ with the canonical action, for some 2-closed group $H$ with $\Z_{n/m} \le H \le S_{n/m}$.
\end{lem}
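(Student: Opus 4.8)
The plan is to prove the two implications separately, using the explicit coset structure in the definition of deleted wreath type on one side and a direct analysis of the subgroup $H \times S_m$ acting canonically on $\Z_{n/m} \times \Z_m \cong \Z_n$ on the other. Throughout, write $n = m \cdot (n/m)$ with $\gcd(m, n/m) = 1$, identify $\Z_n$ with $\Z_{n/m} \times \Z_m$ by the Chinese Remainder Theorem, let $K = \la n/m\ra$ be the subgroup of order $m$, and let $L = \la m\ra$ be the subgroup of order $n/m$; these are the subgroups identified in the definition with ``$H$'' and (implicitly) its complement. By the canonical action of $H \times S_m$ I mean the action on $\Z_{n/m} \times \Z_m$ in which $H \le S_{n/m}$ acts on the first coordinate and $S_m$ acts on the second.

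For the forward direction, suppose $\Gamma$ is of deleted wreath type with divisor $m$. First I would show that $S_m$, acting on the second coordinate only, is contained in $\Aut(\Gamma)$: the condition $S \cap K \in \{\emptyset, K \setminus \{0\}\}$ says the subgraph induced on $K$ is $\bar K_m$ or $K_m$, both of which are $S_m$-invariant; and the condition $S \cap (g + K) \in \{\emptyset, \{g\}, (g+K)\setminus\{g\}, g+K\}$ for each nonzero $g \in L$ says that the bipartite graph of arcs from the coset $0 + K$ to the coset $g + K$ (and, after translating by $(\Z_n)_L$, between any two distinct cosets of $K$) is one of: empty, all arcs, a perfect matching, or the complement of a perfect matching — again, each of these is invariant under $S_m$ acting diagonally on the second coordinate of both cosets, because under the CRT identification the ``matching'' arcs are exactly the pairs with equal second coordinate. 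Hence the diagonal $S_m$ on the second coordinate lies in $\Aut(\Gamma)$. Now let $H$ be the projection to $S_{n/m}$ of the subgroup of $\Aut(\Gamma)$ consisting of all $\sigma \in \Aut(\Gamma)$ that preserve the block system whose blocks are the cosets of $K$ and act trivially on the second coordinate within each block — more cleanly, let $G^*$ be the largest subgroup of $\Aut(\Gamma)$ admitting the cosets of $K$ as a block system, note $(\Z_n)_L \le G^*$ so the quotient $G^*/\mathcal B$ contains $\Z_{n/m}$, and set $H := (G^*/\mathcal B)^{(2)}$; then $H$ is 2-closed with $\Z_{n/m} \le H \le S_{n/m}$, and one checks that $H \times S_m$ (canonical action) is contained in $\Aut(\Gamma)$ by verifying it preserves each of the four types of bipartite pattern and the induced graph on $K$. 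The one point needing care is that $H \le \Aut(\Gamma / \mathcal B)$ genuinely lifts: this uses precisely that every arc-pattern between two $K$-cosets is determined by the presence/absence of the single matching-arc together with the presence/absence of the ``bulk'' arcs, so that $\Gamma$ is essentially determined by $\Gamma/\mathcal B$ together with which cosets carry a matching, and that the latter data is itself a circulant structure on $\Z_{n/m}$ invariant under $H$.

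For the converse, suppose $\Aut(\Gamma) \supseteq H \times S_m$ acting canonically, with $H$ 2-closed and $\Z_{n/m} \le H \le S_{n/m}$. The cosets of $K$ form the block system $\mathcal B$ of orbits of $1 \times S_m$-compatible structure — more precisely, $\mathcal B$ is the block system induced by the first-coordinate projection, and it is a block system of $\Aut(\Gamma)$ since $H \times S_m$ preserves it and contains $(\Z_n)_L$. Fix a nonzero $g \in L$, look at the bipartite digraph of arcs from $0 + K$ to $g + K$, and use that $1 \times S_m \le \Aut(\Gamma)$ fixes the block $0+K$ setwise and acts as the full symmetric group on the second coordinates of $g + K$ (and simultaneously, via the diagonal copy inside, on $0 + K$): a bipartite digraph on $\{0\} \times \Z_m$ to $\{g\} \times \Z_m$ invariant under the diagonal $S_m$ on both sides must be one of $\emptyset$, all arcs, the equal-second-coordinate matching, or its complement — this is the standard fact that the only $S_m$-invariant (diagonal action) subsets of $\Z_m \times \Z_m$ are unions of the diagonal and its complement. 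Translating the base point by $(\Z_n)_L$ transports this conclusion to every pair of distinct $K$-cosets, and reading off which pattern occurs between $0+K$ and $g+K$ gives exactly $S \cap (g+K) \in \{\emptyset, \{g\}, (g+K)\setminus\{g\}, g+K\}$; the same invariance applied within the single block $0 + K$ gives $S \cap K \in \{\emptyset, K \setminus \{0\}\}$. Hence $\Gamma$ is of deleted wreath type with this $m$.

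The main obstacle is the forward direction: producing the group $H$ with the stated properties — 2-closed, sandwiched between $\Z_{n/m}$ and $S_{n/m}$, and such that the canonical $H \times S_m$ genuinely lies in $\Aut(\Gamma)$ — requires checking that the ``quotient'' circulant digraph on $\Z_{n/m}$ obtained by collapsing each $K$-coset really does capture all the automorphism-relevant data, i.e. that an automorphism of that quotient structure (together with the auxiliary ``which cosets are matched'' data) always lifts to $\Gamma$. This is where the relative primality $\gcd(m, n/m) = 1$ and the constraint $m \ge 4$ (so that $S_m$ is strictly larger than its regular cyclic subgroup, forcing the full symmetric group to appear rather than just a cyclic piece) are used, and it is the step I would write out most carefully.
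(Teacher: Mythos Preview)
Your approach is essentially the same as the paper's, and both directions are correct in outline. The converse direction is identical in content: the paper phrases it as ``$S\cap B_i$ is a union of orbits of $\Stab_{1\times S_m}(0,0)$ on $B_i$,'' which is exactly your diagonal-$S_m$-invariance observation.

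For the forward direction your structure (show $1\times S_m\le\Aut(\Gamma)$, take $G^*$ the maximal block-preserving subgroup, set $H$ to be its quotient on blocks, verify $H\times S_m\le\Aut(\Gamma)$, then argue $H$ is $2$-closed) matches the paper's, and you are right to flag the lifting as the crux. But you have misdiagnosed where $m\ge 4$ enters. It is \emph{not} that ``$S_m$ is strictly larger than its regular cyclic subgroup''; that plays no role. The actual mechanism is a cardinality argument: for $h\in G^*/{\cal B}$ choose $g\in G^*$ projecting to $h$; then $g$ carries the arcs from $B_{i_1}$ to $B_{i_2}$ bijectively onto those from $B_{h(i_1)}$ to $B_{h(i_2)}$, so $\vert S\cap B_{i_2-i_1}\vert=\vert S\cap B_{h(i_2)-h(i_1)}\vert$. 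The four admissible types have cardinalities $0,1,m-1,m$, and these are pairwise distinct once $m>2$; hence equal cardinality forces equal type, and that is what makes $(h,\sigma)$ preserve every arc regardless of $\sigma\in S_m$. Your two-bit encoding (matching-arc present/absent, bulk present/absent) is a perfectly good repackaging, but proving each bit is $H$-invariant still comes down to exactly this count. Once that is in hand, $2$-closedness of $H$ follows either by your device of replacing $G^*/{\cal B}$ with its $2$-closure (the type function, being a union of orbitals, is preserved by the $2$-closure too), or as the paper does it, from $(H\times S_m)^{(2)}=H^{(2)}\times S_m\le G^*$ together with the maximality of $G^*$.
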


\begin{proof}
In this proof for a given $m$ satisfying $n=km$ and $\gcd(m,k)=1$, it will be convenient to consider $\Z_n = \Z_k\times\Z_m$ in the obvious fashion.  The sets $B_i=\{(i,j):j \in \Z_m\}$ for each $i\in \Z_k$ will be important.

First, suppose $\Gamma$ is of deleted wreath type with $m\ge 4$ being the divisor of $n$ that satisfies the conditions of that definition, and $n=mk$.  Using $\Z_n=\Z_k \times \Z_m$, we see that for every $i \in \Z_k\setminus\{0\}$, we have $S\cap B_i \in \{\emptyset, \{(i,0)\},B_i \setminus\{(i,0)\},B_i\}$.  Also, $S\cap B_0 \in \{\emptyset, B_0 \setminus\{(0,0)\}\}$.

Let $\cal B$ be the partition of $\Z_k \times \Z_m$ given by ${\cal B}=\cup_{i \in \Z_k}B_i$.  Let $G$ be the maximal subgroup of $\Aut(\Gamma)$ that admits $\cal B$ as a complete block system. Clearly the canonical regular cyclic subgroup isomorphic to $\Z_n=\Z_k \times\Z_m$ admits $\cal B$, so is a subgroup of $G$.

Define $H \le S_k$ to be the projection of $G$ onto the first coordinate.  Since $\Z_k \times \Z_m \le G$, clearly $\Z_k \le H$. We claim that $H\times S_m \le \Aut(\Gamma)$.  To see this, we consider the action of any element of this group, on any arc of $\Gamma$.

Let $((i_1,j_1),(i_2,j_2))$ be an arc of $\Gamma$, and let $(h,g) \in H \times S_m$.  Suppose first that $i_1=i_2$.  We have $S \cap B_0 \in \{\emptyset, B_0 \setminus\{(0,0)\}\}$, and $i_1=i_2$ forces $S\cap B_0\neq \emptyset$.  Hence the subgraph of $\Gamma$ induced by the vertices of any $B_i$ is complete, so clearly $((h(i_1),g(j_1)),(h(i_2),g(j_2)))$ is an arc since $h(i_2)=h(i_1)$.

Now suppose $i_1 \neq i_2$.  So $h(i_1)\neq h(i_2)$.  Let $i=i_2-i_1$ and let $i'=h(i_2)-h(i_1)$, with $1 \le i, i' \le k-1$. Notice that since $H$ is the projection of $G$ onto the first coordinate, there is some $g \in G$ that takes $B_{i_1}$ to $B_{h(i_1)}$ and $B_{i_2}$ to $B_{h(i_2)}$.  Hence the number of arcs in $\Gamma$ from $B_{i_1}$ to $B_{i_2}$ must be the same as the number of arcs from $B_{h(i_1)}$ to $B_{h(i_2)}$.  Now, the number of arcs in $\Gamma$ from $B_{i_1}$ to $B_{i_2}$ is $|S \cap B_i|$, while the number of arcs in $\Gamma$ from $B_{h(i_1)}$ to $B_{h(i_2)}$ is $|S\cap B_{i'}|$, so these values must be equal. Since $1 \le i, i' \le k-1$, both of these sets have the same cardinality from $\{0, 1, m-1, m\}$; in fact, since $((i_1,j_1),(i_2,j_2))$ is an arc of $\Gamma$, the cardinality cannot be $0$.  Since $m\ge 4>2$ these cardinalities are all distinct, so $S\cap B_i$ and $S\cap B_{i'}$ are uniquely determined by their cardinality.

If the cardinality is 1, then $S\cap B_i=\{(i,0)\}$ so $j_2=j_1$.  Hence $g(j_1)=g(j_2)$, and since $S\cap B_{i'}=\{(i',0)\}$, the arc $((h(i_1),g(j_1)),(h(i_2),g(j_2)))$ is in $\Gamma$.  Similarly, if the cardinality is $m-1$, then $S\cap B_i=\{B_i \setminus\{(i,0)\}\}$ so $j_2\neq j_1$.  Hence $g(j_1)\neq g(j_2)$, and since $S\cap B_{i'}=\{B_{i'} \setminus\{(i',0)\}\}$, the arc $((h(i_1),g(j_1)),(h(i_2),g(j_2)))$ is in $\Gamma$.
Finally, if the cardinality is $m$, then $S\cap B_i=B_i$, and $S\cap B_{i'}=B_{i'}$, so the arc $((h(i_1),g(j_1)),(h(i_2),g(j_2)))$ is in $\Gamma$.

We have shown that $H \times S_m \le \Aut(\Gamma)$, as desired, and that $\Z_{n/m} \le H \le S_{n/m}$.  It only remains to show that $H$ is 2-closed.
We have $H \times S_m$ admits $\mathcal B$, so by \cite[Theorem 4.11]{Wielandt1969}, so does $(H\times S_m)^{(2)}$.  Since $H \times S_m \le \Aut(\Gamma)$, we also have $(H\times S_m)^{(2)} \le \Aut(\Gamma)$ since $\Aut(\Gamma)$ is 2-closed.  By the definition of $G$, this means $(H \times S_m)^{(2)} \le G$.
By \cite[Theorem 5.1]{Cameronetal2002} we have that $(H\times S_m)^{(2)} = H^{(2)}\times (S_m)^{(2)} = H^{(2)}\times S_m$, so $H^{(2)} \times S_m \le G$.  As $H$ is projection of $G$ into the first coordinate, we conclude that $H^{(2)} = H$ and $H$ is $2$-closed.
This completes  the first direction of the biconditional.

Now we assume that $\Aut(\Gamma)$ contains a subgroup isomorphic to $H \times S_m$ with the canonical action, for some $2$-closed group $H$ with $\Z_{n/m} \le H \le S_{n/m}$.

Clearly the orbits of $\Stab_{1\times S_m}(0,0)$ are transitive on $B_i \setminus \{(i,0)\}$, and so the orbits of $\Stab_{1\times S_m}(0,0)$ on $B_i$ are $\{(i,0)\}$ and
$B_i\setminus \{(i,0)\}$. Also $1\times S_m \leq H\times S_m
 \le \Aut(\Gamma)$ implies $\Stab_{1\times S_m}(0,0)\leq
\Stab_{H\times S_m} (0,0)\leq \Stab_{\Aut(\Gamma)}(0,0)$. Thus each $S \cap B_i$
is a union of some (possibly none) of these two orbits. Hence the
only possibilities for each $S\cap B_i$ are $\emptyset, \{(i,0)\}, B_i\setminus \{(i,0)\}$ and $
B_i$ if $1\leq i\leq k-1$; and since $0 \not\in S$, $S\cap B_0$ is either $\emptyset$ or
$B_0\setminus\{(0,0)\}$.
\end{proof}

Notice that if Theorem \ref{maintool}(1) applies to $\Aut(\Gamma)$ for some circulant digraph $\Gamma$, and some $G_i \cong S_m$ where $m \ge 4$, then $\Aut(\Gamma)=H\times S_m$ for some $H$ with $\Z_{n/m}\le H \le S_{n/m}$.  Furthermore, since $\Aut(\Gamma)$ is 2-closed and the 2-closure of a direct product is the direct product of the 2-closures of the factors \cite[Theorem 5.1]{Cameronetal2002}, $H$ is 2-closed, so the above lemma tells us that $\Gamma$ is of deleted wreath type.  We also observe that provided $m\ge 4$, a deleted wreath product type circulant digraph cannot be a normal circulant digraph.

\begin{cor}\label{deletedwreathupper}
There are at most $2^{n/m+1}$ graphs $\Gamma$ and at most $2^{2n/m}$ digraphs $\Gamma$ that contain $K\times S_m$ for any choice of $K$ that is 2-closed and has $\Z_{n/m} \le K \le S_{n/m}$, where $m \ge 4$.  Equivalently, there are at most $2^{2n/m}$ digraphs of deleted wreath type, and at most $2^{n/m+1}$ graphs of deleted wreath type, for any fixed $m \ge 4$ with $m \mid n$ and $\gcd(m,n/m)=1$.
\end{cor}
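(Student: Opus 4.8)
The plan is to bound the relevant (di)graphs by directly counting admissible connection sets. The key observation is that, by Lemma~\ref{Counting V} (which applies since $m\ge 4$), for our fixed $m$ the circulant digraphs $\Gamma$ of order $n$ whose automorphism group contains a copy of $K\times S_m$ with the canonical action, for some $2$-closed $K$ with $\Z_{n/m}\le K\le S_{n/m}$, are \emph{exactly} the circulant digraphs of deleted wreath type having $m$ as the divisor required by Definition~\ref{dwdefin}. Hence the two formulations in the statement describe the same family, and it suffices to count the connection sets $S\subseteq\Z_n$ meeting the structural conditions of Definition~\ref{dwdefin} for this $m$.

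First I would set $k=n/m$ and, exactly as in the proof of Lemma~\ref{Counting V}, use the Chinese Remainder Theorem to identify $\Z_n=\Z_k\times\Z_m$, so that $H=\langle n/m\rangle=\{0\}\times\Z_m=B_0$ and $B_i=(i,0)+H$. In these terms Definition~\ref{dwdefin} says precisely that $S\cap B_0\in\{\emptyset,\,B_0\setminus\{(0,0)\}\}$ and, for each $i\in\Z_k\setminus\{0\}$, $S\cap B_i\in\{\emptyset,\,\{(i,0)\},\,B_i\setminus\{(i,0)\},\,B_i\}$. For digraphs these choices are unconstrained and independent, so the count is at most $2\cdot 4^{k-1}=2^{2k-1}\le 2^{2n/m}$, as claimed.

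For graphs I would additionally impose $S=-S$. Under the above identification inversion maps $B_i$ onto $B_{-i}$ and the marked point $(i,0)$ onto $(-i,0)$, hence carries the four candidate values of $S\cap B_i$ bijectively onto the four candidate values of $S\cap B_{-i}$. Therefore $S$ is determined by its restriction to one index from each inversion-pair $\{i,-i\}$, and I would organize the count by inversion-classes of indices: the class $\{0\}$ contributes a factor $2$ (both candidates are self-inverse); an index $i\ne 0$ with $2i\equiv 0\pmod k$ --- of which there is at most one --- is its own class and contributes a factor $4$ (all four candidates are self-inverse because inversion fixes $B_i$ setwise and fixes $(i,0)$); and each genuine pair $\{i,-i\}$ contributes a factor $4$. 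If $t\in\{0,1\}$ is the number of self-paired nonzero indices and $p$ the number of genuine pairs then $1+t+2p=k$, so $t+p=(k-1+t)/2\le k/2$ and the number of graphs is at most $2\cdot 4^{t+p}\le 2\cdot 4^{k/2}=2^{k+1}=2^{n/m+1}$.

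The whole argument is bookkeeping once Definition~\ref{dwdefin} and Lemma~\ref{Counting V} are available; the only place that needs real care is the graph case, where one must verify that inversion respects both the coset structure $\{B_i\}$ and the marked points $(i,0)$ so that it genuinely permutes the four candidate sets among themselves, and must check that the unique (if present) self-inverse nonzero index contributes only a bounded factor $4$. That last point is exactly what pins the exponent at $n/m+1$; a careless treatment would leave it unbounded.
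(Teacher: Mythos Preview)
Your argument is correct and follows essentially the same route as the paper's proof: both obtain the digraph count $2\cdot 4^{n/m-1}<2^{2n/m}$ directly from the structural description in Lemma~\ref{Counting V}, and both handle the graph case by observing that inversion pairs the cosets $B_i\leftrightarrow B_{-i}$ while respecting the four candidate shapes for $S\cap B_i$, leaving at most one self-paired nonzero coset that contributes a factor~$4$. Your write-up is in fact a bit more explicit than the paper's about why inversion carries the marked point $(i,0)$ to $(-i,0)$ and hence permutes the four candidate sets among themselves, which is exactly the point that makes the pairing argument go through.
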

\begin{proof}
A consequence of Lemma \ref{Counting V} is that there are $2\cdot 4^{n/m - 1} < 4^{n/m} = 2^{2n/m}$ digraphs of order $n$ whose automorphism group contains $K\times S_m$ for $m\ge 4$.  Note that a digraph $\Gamma$ with $\Aut(\Gamma) = K\times S_m$, $m\ge 3$, is a graph if and only if $K$ contains the map $\delta:\Z_{n/m}\to\Z_{n/m}$ given by $\delta(x) = -x$.  Then $\delta(g+H) = (-g)+H$ where $H=\la n/m\ra$, and so if $n/m$ is odd, there are at most $4^{n/(2m)} = 2^{n/m}$ graphs $\Gamma$ that contain $K\times S_m$ for any choice of $K$ that is 2-closed and has $\Z_{n/m} \le K \le S_{n/m}$.  Even if $n/m$ is even, only one nontrivial coset of $\la n/m\ra$ is fixed by $\delta$, so there are at most $2\cdot 4 \cdot 4^{(n/m-2)/2}=2^{n/m+1}$ graphs $\Gamma$ that contain $K\times S_m$ for any choice of $K$ that is 2-closed and has $\Z_{n/m} \le K \le S_{n/m}$.
\end{proof}

\section{Normal Circulants}

In this section our main focus is on determining whether or not almost all circulants that do not have automorphism groups as small as possible are normal circulants, as conjectured by the second author \cite[Conjecture 1]{Dobson2010b}.  We show that this conjecture is false for circulant digraphs of order $n$ where $n\equiv 2\ (\mod 4)$ has a fixed number of distinct prime factors (Theorem \ref{conjfalse1}).  Additionally, we correct some oversights in \cite[Theorem 3.5]{Dobson2010b}, and show that the conjecture is not true for circulant graphs of order $p$ or $p^2$, where $p$ is a safe prime, or whose order is a power of 3 (Theorem \ref{primepower}).  We also show that the conjecture is true for circulant digraphs of odd order $n$ not divisible by $9$ (Theorem \ref{normaldi}), and for circulant graphs of order $n$ if $n$ is  not a safe prime, the square of a safe prime, even, or a multiple of 9 (Theorem \ref{normal}).  We begin by showing that almost every circulant graph of order $n$ has automorphism group as small as possible.  We remark that Babai and Godsil \cite[Theorem 5.3]{BabaiG1982} have shown this to be true for abelian groups of order $n$, where $n\equiv 3\ (\mod 4)$.

Let $\ACG(n)$ be the set of all circulant graphs of order $n$, and $\Small(n)$ be the set of
all circulant graphs $\Gamma$ of order $n$ such that $\Aut(\Gamma) =
\la\rho,\iota\ra\cong D_n$, where $\iota(i) = -i$.  Thus $\Small(n)$
is the set of all circulant graphs whose automorphism groups are as
small as possible.

\begin{thrm}\label{graphsmall}
Almost all circulant graphs are in $\Small(n)$.  That is,
$$\lim_{n\to\infty}\frac{\vert \Small(n)\vert}{\vert\ACG(n)\vert} =
1.$$
\end{thrm}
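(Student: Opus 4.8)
The plan is to show that the number of circulant graphs of order $n$ that are \emph{not} in $\Small(n)$ is vanishingly small compared to $|\ACG(n)|$. First I would recall the crude lower bound $|\ACG(n)| \ge 2^{(n-1)/2}$ (roughly): a circulant graph is determined by a choice of inverse-closed subset of $\Z_n\setminus\{0\}$, and there are at least $2^{\lfloor (n-1)/2\rfloor}$ such subsets since we may freely choose, for each pair $\{i,-i\}$, whether to include it. This is the denominator we will compare against.

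Next I would invoke the structural dichotomy of Theorem \ref{maintool}. If $\Gamma\not\in\Small(n)$, then $\Aut(\Gamma)$ strictly contains $D_n$, and in particular contains a regular cyclic subgroup, so Theorem \ref{maintool} applies to $G = \Aut(\Gamma)$. In case (1), either $\Aut(\Gamma)$ contains a normal regular cyclic subgroup — in which case, since $\Aut(\Gamma) \supsetneq D_n$, the graph is ``normal but not smallest'', and I would bound the number of such graphs separately (these are governed by the fact that $\Aut(\Gamma)/\la\rho\ra$ embeds in $\Z_n^*$, a group of size $n$, forcing strong structural constraints on $S$) — or some $G_i \cong S_{n_i}$ with $n_i \ge 4$, in which case by the remark following Corollary \ref{deletedwreathupper}, $\Gamma$ is of deleted wreath type, and Corollary \ref{deletedwreathupper} bounds the number of such graphs by $\log_2 n \cdot 2^{n/m+1}$ summing over the (at most $\log_2 n$) valid $m \ge 4$, with the dominant term at the smallest admissible $m$. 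In case (2), Corollary \ref{semiwreathchar} tells us $\Gamma$ is a generalized wreath circulant graph, and Corollary \ref{graphsemiwreathhey} bounds the number of these by $(\log_2^2 n)2^{3n/8+1/2}$.

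Then I would assemble the estimate: the number of graphs not in $\Small(n)$ is at most the sum of (number of nontrivially-normal circulants) $+$ (number of deleted wreath type graphs) $+$ (number of generalized wreath graphs). Each of these is of the form $\mathrm{poly}(\log n)\cdot 2^{cn}$ with $c < 1/2$ — for the generalized wreath term $c = 3/8$, for the deleted wreath term the exponent is at most $n/4 + 1$ (smallest admissible $m$ is $4$, but one must check $\gcd(m,n/m)=1$; worst realistic case $m=4$ gives $n/4$, and if $4\nmid n$ with appropriate coprimality one gets a similarly sub-$n/2$ bound, while if no valid $m\ge 4$ exists the term is absent), and for the normal-not-smallest term one gets an even smaller exponent because the constraint that $\Aut(\Gamma)/\la\rho\ra \le \Z_n^*$ combined with $2$-closedness forces $S$ to be a union of orbits of a nontrivial subgroup of $\Z_n^*$, drastically cutting the count. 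Dividing by $|\ACG(n)| \ge 2^{(n-1)/2}$ and letting $n\to\infty$ gives the limit $0$ for the complement, hence $1$ for $|\Small(n)|/|\ACG(n)|$.

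The main obstacle I anticipate is the ``normal but not smallest'' case in part (1) of Theorem \ref{maintool} and, relatedly, making sure the deleted-wreath count genuinely has exponent bounded away from $n/2$ for all relevant $n$ (one has to be careful that a valid $m\ge 4$ with $\gcd(m,n/m)=1$ exists only when $n$ has the right shape, and to handle the case where the only ``extra'' automorphisms come from a normal cyclic overgroup without any $S_{n_i}$ factor). Bounding the number of circulant graphs admitting a fixed nontrivial subgroup of $\Z_n^*$ acting on the connection set is the delicate piece: one wants that any such subgroup has order at least $2$ (easy) but ideally one exploits that the number of orbits of a subgroup of order $d$ on $\Z_n\setminus\{0\}$ is about $n/d$, so the count is about $2^{n/d}$, and one needs $d\ge 3$ or a more careful argument for $d=2$ — this is where I'd expect to spend the most effort, likely by noting that the relevant multiplier group, being a subgroup of $\Aut(\Gamma)$ that is $2$-closed together with $\la\rho\ra$, must have order larger than what a careless bound suggests, or by handling small-multiplier cases via a direct pairing argument. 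Everything else is routine bookkeeping with the counting lemmas already established.
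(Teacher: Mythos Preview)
Your decomposition is structurally correct, but the piece you yourself flag as the main obstacle --- bounding the ``normal but not smallest'' graphs --- is exactly where the paper does its real work, and you have not supplied that argument. The paper in fact \emph{merges} your case (a) (normal, not smallest) with your case (b) (deleted wreath type) under a single hypothesis: if $\Gamma$ is not a generalized wreath circulant and $\Gamma\notin\Small(n)$, then in either subcase of Theorem~\ref{maintool}(1) there exists $\alpha\in\Aut(\Z_n)$ with $\alpha\in\Aut(\Gamma)$ and $\alpha\notin\la\iota\ra$. (When some $G_i\cong S_{n_i}$ with $n_i\ge 4$, take a nontrivial automorphism of $\Z_{n_i}$ inside $S_{n_i}$ and extend it to $\Z_n$; for $n>6$ one can arrange that the extension is not $\iota$.) So Corollary~\ref{deletedwreathupper} is never invoked in this proof, and your separate deleted-wreath bookkeeping, while harmless, is redundant once the key estimate is in hand.

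That key estimate is: for each fixed $\alpha\in\Aut(\Z_n)\setminus\la\iota\ra$, the group $\la\iota,\alpha\ra$ has at most $3n/8 + 5/4$ orbits on $\Z_n$, hence there are at most $2^{3n/8+1/4}$ circulant graphs admitting $\alpha$, and summing over the at most $n$ choices of $\alpha$ gives $n\cdot 2^{3n/8+1/4}$. The orbit bound is proved by classifying orbit lengths: singleton orbits lie in $\{0,n/2\}$; an orbit of length exactly $2$ is $\{x,-x\}$ with $\alpha(x)\in\{x,-x\}$, so $x$ lies in the fixed-point set of $\alpha$ or of $\iota\alpha$, each a proper subgroup of $\Z_n$ of size at most $n/2$; every remaining orbit has length at least $4$. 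Maximising the orbit count under these constraints gives $3n/8+5/4$. Your heuristic ``$2^{n/d}$ for a multiplier subgroup of order $d$, needing $d\ge 3$'' misses the point: the relevant count is the number of orbits of $\la\iota,\alpha\ra$, not of $\la\alpha\ra$, and the crux is controlling how many of those orbits can have length exactly $2$ rather than $\ge 4$. Without this orbit analysis the proof does not close.
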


\begin{proof}
By Theorem \ref{maintool} and Corollary \ref{semiwreathchar}, we have that a circulant graph of order
$n$ is either a generalized wreath circulant, or there exist $G_1, \ldots, G_r$ such that $\Aut(\Gamma)=G_1\times\ldots \times G_r$, and for each $G_i$, either $G_i \cong S_{n_i}$, or
 $G_i$ contains a normal regular
cyclic group of order $n_i$, where $r\ge
1$, $\gcd(n_i,n_j) = 1$ for $i \neq j$, and $n =
n_1n_2\cdots n_r$.  In Corollary \ref{graphsemiwreathhey}, it is shown that there are at
most $\log_2^2n\cdot 2^{3n/8+1/2}$ generalized wreath circulant graphs of
order $n$. Now, any circulant graph that is not a generalized wreath circulant graph of order
$n$ either has automorphism group which normalizes $\la\rho\ra$, or
there is some $G_i$ which is a symmetric group and $n_i\ge 4$.  If
$G_i$ is a symmetric group and $n_i\ge 4$, then $G_i$ contains a
nontrivial automorphism of $\Z_{n_i}$.  As we may assume $n$ is
arbitrarily large, we may assume that $n\not = 4$ or $6$. Then if $n_i
= n$ or $n_i = n/2$, we may choose this automorphism so that it is
not in $\la\iota\ra$. Otherwise (if $n_i <n/2$) there is some $n_j \ge 3$, and as $\gcd(n_i,n_j) = 1$ such an
automorphism of $\Z_{n_i}$ extends to an automorphism of $\Z_n$
which is not contained in $\la\iota\ra$.  In any case, we have an
automorphism $\alpha$ of $\Z_n$ contained in $\Aut(\Gamma)$ but not
in $\la\iota\ra$. Obviously, if $\la\rho\ra\tl\Aut(\Gamma)$, then
either there exists an automorphism $\alpha$ of $\Z_n$ contained in
$\Aut(\Gamma)$ but not in $\la\iota\ra$, or $\Gamma$ is in
$\Small(n)$.  Thus if $\Gamma$ is not a generalized wreath circulant graph, then
either $\Gamma$ is in $\Small(n)$ or there exists an automorphism
$\alpha$ of $\Z_n$ contained in $\Aut(\Gamma)$ but not in
$\la\iota\ra$.

Now observe that $\iota$ has at most two fixed points, and so has at
most $(n - 2)/2 + 2$ orbits. Let $\alpha\in\Aut(\Z_n)$ be such that
$\alpha\not\in\la\iota\ra$.  Observe that we may divide the orbits
of $\la\iota,\alpha\ra$ into three types:  singleton orbits, orbits
of length $2$, and orbits of length greater than $2$.  As $\la\iota\ra$
has at most $2$ singleton orbits, $\la\iota,\alpha\ra$ has at most
two singleton orbits, namely $0$ and $n/2$.  If $x\not = 0,n/2$,
then $x$ is contained in an orbit of $\la\iota\ra$ of length $2$.  If
such an $x$ is contained in an orbit of $\la\iota,\alpha\ra$ of length
$2$, then setting $\alpha(x) = ax$, $a\in\Z_n^*$, we have that
$\{x,-x\} = \{ax,-ax\}$, in which case $x = ax$ and $x$ is a fixed
point of $\alpha$ or $x = -ax$ and $x$ is a fixed point of
$\iota\alpha$.  If $x = ax$ set $\beta = \alpha$ and if $x = -ax$,
set $\beta = \iota\alpha$. Then $\la\iota,\alpha\ra =
\la\iota,\beta\ra$, and $x$ is a fixed point of $\beta$.  It is easy
to see that the set of fixed points of $\beta$, say $H(\beta)$,
forms a subgroup of $\Z_n$, and so $\vert H(\beta)\vert\le n/2$.
Thus $\la\iota,\alpha\ra$ has at at most $(n/2 - 1)/2$ orbits of
length two, and so at most $(n/2 - 1)/2 + 2$ orbits of length one or
two.  Every remaining orbit of $\la\iota,\alpha\ra$ is a union of
orbits of $\la\iota\ra$ of size $2$, and so every remaining orbit of
$\la\iota,\alpha\ra$ has length at least $4$.
Clearly, the number of orbits of $\la \iota, \alpha\ra$ is maximized if it has
2 orbits of length 1, $(n/2-1)/2$ orbits of length 2, and the remainder have length greater than 2.
In this case, there will be at
most $(n/2-1)/4 = n/8-1/4$ orbits of length greater than $2$.  We conclude
that there are at most $3n/8 + 5/4$ orbits of $\la\iota,\alpha\ra$,
and as $S$ must be a union of orbits of $\la\iota,\alpha\ra$ not including $\{0\}$, there
are at most $2^{3n/8 + 1/4}$ such circulant graphs for each
$\alpha\in\Aut(\Z_n)$, $\alpha\not = \iota$. As there are at most
$n$ (actually $\varphi(n)$ of course) automorphisms of $\Z_n$, there
are at most $n\cdot 2^{3n/8 + 1/4}$ circulant graphs that contain an
automorphism of $\Z_n$ other than $\iota$.

We have shown that there are at most $n\cdot 2^{3n/8+ 1/4}+\log_2^2n\cdot
2^{3n/8 + 1/2} < \sqrt{2}(n + \log_2^2n)2^{3n/8}$ circulant graphs of
order $n$ that are not in $\Small(n)$.  As there are $2^{(n-2)/2 + 1}
= 2^{n/2}$ circulant graphs of order $n$ if $n$ is even and
$2^{(n-1)/2}$ circulant graphs of order $n$ if $n$ is odd,
\begin{eqnarray*}
\lim_{n\to\infty}\frac{\vert \Small(n)\vert}{\vert\ACG(n)\vert}
& \ge & 1 - \lim_{n\to\infty}\frac{\sqrt{2}(n + \log_2^2n)2^{3n/8}}{2^{(n-1)/2}}\\
& = & 1 - \lim_{n\to\infty}\frac{2(n + \log_2^2n)}{2^{n/8}} =
1.
\end{eqnarray*}
\end{proof}

The above theorem clearly shows that almost all circulant graphs
are normal. In 2010, the second author proposed the following conjecture
for Cayley (di)graphs (not necessarily circulant) whose
automorphism group is not as small as possible \cite[Conjecture 1]{Dobson2010b}.

\begin{conj}\label{conjecture}
Almost every Cayley (di)graph whose automorphism group is not as
small as possible is a normal Cayley (di)graph.
\end{conj}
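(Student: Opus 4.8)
The plan is to settle the conjecture for circulant (di)graphs by a counting argument: compare the number of circulant (di)graphs of order $n$ that are non-normal and do not have automorphism group as small as possible against the number that are normal but do not have automorphism group as small as possible, and show the latter dominates. First I would reduce the structural question using Theorem \ref{maintool} and Corollary \ref{semiwreathchar}: a circulant (di)graph $\Gamma$ of order $n$ is either a generalized wreath circulant, or $\Aut(\Gamma) = G_1 \times \cdots \times G_r$ with each $G_i$ either isomorphic to $S_{n_i}$ or containing a normal regular cyclic subgroup of order $n_i$, the $n_i$ pairwise coprime with product $n$. The upshot I want is that a circulant (di)graph which is neither normal nor has automorphism group as small as possible must be a non-normal generalized wreath circulant or of deleted wreath type with some $m \ge 4$ (the latter never normal, by the remark following Lemma \ref{Counting V}); so only these two families need to be counted from above.

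For the upper bounds I would use the machinery already assembled: Corollary \ref{cor-count-1} (resp. Corollary \ref{graphsemiwreathhey}) bounds the number of generalized wreath circulant digraphs (resp. graphs) of order $n$, and Corollary \ref{deletedwreathupper} bounds the number of deleted-wreath-type digraphs (resp. graphs) with a fixed $m$ by $2^{2n/m}$ (resp. $2^{n/m+1}$); summing the latter over the at most $\log_2 n$ admissible divisors $m$ and retaining the dominant term coming from the smallest admissible $m$. Each resulting bound has the shape $2^{cn}$ with $c$ strictly below the exponent governing the total count of circulant (di)graphs of order $n$, but $c$ depends sensitively on the two smallest prime divisors of $n$ and on the smallest divisor $\ge 4$ of $n$ coprime to its cofactor.

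The hard part — and the step I expect to be the real obstacle — is the matching lower bound on the number of circulant (di)graphs that are \emph{both} normal \emph{and} not as small as possible. A normal circulant (di)graph has $\Aut(\Gamma) \le \la \rho \ra \rtimes \Aut(\Z_n)$, so its connection set is a union of orbits of some subgroup $A \le \Z_n^{*}$ on $\Z_n \setminus \{0\}$, and not being as small as possible forces $A \supsetneq \{1\}$ (digraphs) or $A \supsetneq \{1,-1\}$ (graphs). I would fix one well-chosen such $A$ — for instance of smallest prime order $\ell$ dividing $\varphi(n)$, adjoined with $-1$ in the graph case — so that $A$ has about $(n-1)/\ell + O(1)$ orbits on the nonzero elements, count the roughly $2^{(n-1)/\ell + O(1)}$ connection sets fixed by $A$, and show that for almost all of them the full automorphism group is still normal (the exceptional ones blow up to a wreath- or deleted-wreath-type group, already bounded above) and strictly larger than the minimal group. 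When this $2^{(n-1)/\ell + O(1)}$ beats the bad count of the previous paragraph, the conjecture follows.

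Where I expect this to break is precisely when $\Z_n^{*}$ is too rigid to supply a subgroup with enough orbits. When $n$ is odd and not divisible by $9$ one can arrange matters (yielding Theorems \ref{normaldi} and \ref{normal}), but when $n = p$ is a safe prime, $\Z_p^{*} \cong \Z_{2q}$ has only the subgroups $\{1\}, \{1,-1\}, C_q, \Z_{2q}$, and $C_q$ has merely two orbits on $\Z_p \setminus \{0\}$, so there are only boundedly many normal-not-small circulant graphs while Burnside's theorem on transitive groups of prime degree forces $K_p$ and $\bar K_p$ into the non-normal-not-small count — giving the counterexamples of Theorem \ref{primepower} (similarly for $n = p^2$ and for powers of $3$). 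Likewise $n \equiv 2 \pmod 4$ with a bounded number of prime factors should yield digraph counterexamples (Theorem \ref{conjfalse1}) by a parallel computation of the two families. So concretely the plan is: carry out the upper/lower bound comparison for generic $n$, and then separately identify and treat the ``stiff'' families of $n$ where the lower bound collapses, obtaining there disproofs rather than proofs.
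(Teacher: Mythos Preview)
This statement is a conjecture that the paper does not prove --- indeed, the paper disproves it for several infinite families of circulants (Theorems \ref{conjfalse1} and \ref{primepower}) while confirming it for others (Theorems \ref{normaldi} and \ref{normal}). Your plan to \emph{settle} rather than prove it --- reduce via Theorem \ref{maintool} and Corollary \ref{semiwreathchar} to the generalized-wreath and deleted-wreath families, bound those from above by Corollaries \ref{cor-count-1}, \ref{graphsemiwreathhey}, \ref{deletedwreathupper}, and compare against lower bounds for normal-but-not-small circulants coming from orbit counts of subgroups of $\Z_n^{*}$, correctly anticipating that the lower bound collapses exactly when $\Z_n^{*}$ is too rigid (safe primes, their squares, powers of $3$, and $n\equiv 2\pmod 4$ with bounded prime support) --- is precisely the paper's approach.
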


It is difficult to determine the automorphism group of a (di)graph,
so the main way to obtain examples of vertex-transitive graphs is to
construct them. An obvious construction is that of a Cayley
(di)graph, and the conjecture of Imrich, Lov\'{a}sz, Babai, and
Godsil says that when performing this construction, additional
automorphisms are almost never obtained. The obvious way of
constructing a Cayley (di)graph of $G$ that does not have
automorphism group as small as possible is to choose an automorphism
$\alpha$ of $G$ and make the connection set a union of orbits of
$\alpha$. The above conjecture in some sense says that this
construction almost never yields additional automorphisms other than
the ones given by the construction.

Throughout the remainder of this paper, all circulant digraphs of order $n$ whose automorphism groups are of generalized wreath, deleted wreath, and strictly deleted wreath types will be denoted by $\SW(n),\DW(n)$, and
$\SDW(n)$ respectively. The corresponding sets of all graphs whose automorphism groups are of  generalized wreath and deleted wreath type will be denoted by $\SWG(n)$ and $\DWG(n)$, respectively. Also, the sets of all digraphs that are circulants, DRR circulants, normal circulants, and non-normal circulants of order $n$ will be denoted as $\ACD(n),\DRR(n),\Nor(n)$ and $\NonNor(n)$, respectively.  The corresponding sets of all graphs that are circulants, normal circulants, and nonnormal circulants, will be denoted by $\ACG(n)$, $\NorG(n)$, and $\NonNorG(n)$, respectively.

The following lemma will prove useful in determining how many circulant (di)graphs are not normal.

\begin{lem}\label{semi-non-normal}
A circulant digraph $\Gamma$ of composite order $n$ that is a $(K,H)$-generalized wreath circulant digraph is not normal if $n$ is not divisible by $4$.
\end{lem}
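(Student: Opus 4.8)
The plan is to show that a $(K,H)$-generalized wreath circulant digraph $\Gamma$ of composite order $n$, with $4\nmid n$, automatically has an automorphism outside the normalizer of $\langle\rho\rangle$, where $\langle\rho\rangle=(\Z_n)_L$. By Lemma \ref{semiwreathequiv}, there exists $G\le\Aut(\Gamma)$ containing a regular cyclic subgroup $\langle\rho\rangle$, together with complete block systems $\mathcal B\preceq\mathcal C$ formed by the orbits of $K$ and $H$ respectively (with $1<K\le H\le\Z_n$), such that $\fix_{G^{(2)}}(\mathcal B)\vert_C\le G^{(2)}\le\Aut(\Gamma)$ for every $C\in\mathcal C$. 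The idea is to exploit this local freedom: because $\fix_{G^{(2)}}(\mathcal B)\vert_C$ acts on each $C$ and these actions may be applied independently on the distinct blocks of $\mathcal C$, one builds an automorphism of $\Gamma$ that permutes the blocks of $\mathcal B$ within a single block $C$ in a way that cannot be realized by an element of $N_{S_n}(\langle\rho\rangle)\cong\AGL(1,\Z_n)$ (the holomorph), which is far too rigid to do this once $|\mathcal B|$ restricted to $C$ is large enough.

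The key steps, in order, would be as follows. First, record that since $n$ is composite we may pick $K$ and $H$ so that $|K|=k>1$ and $[\Z_n:H]=m>1$; as in Lemma \ref{numbersemiwreath} we may even assume $K=L_q$ has prime order and $H=M_p$ has index a prime $p$, so $\mathcal C$ has exactly $p$ blocks each a union of $n/(pk)$ or so blocks of $\mathcal B$ — in any case each $C\in\mathcal C$ genuinely contains more than one block of $\mathcal B$ (this uses $K\le H$ with $K\neq\Z_n$). Second, observe that $\fix_{G^{(2)}}(\mathcal B)\vert_C$ contains the restriction of $\rho^{n/k}$ (a generator of $K$ acting blockwise), and hence, applied on one block $C_0\in\mathcal C$ and the identity elsewhere, gives an element $\sigma\in G^{(2)}\le\Aut(\Gamma)$ that fixes every vertex outside $C_0$ and acts as a nontrivial power of the regular rotation of $K$ inside $C_0$. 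Third — and this is where $4\nmid n$ is used — argue that such a $\sigma$ cannot lie in $N_{\Aut(\Gamma)}(\langle\rho\rangle)$: an element of the holomorph of $\Z_n$ that fixes more than one point is the identity (it fixes a coset of a subgroup, but here it fixes a set that is a union of all blocks but one, which is not a union of cosets of any proper subgroup unless that subgroup has index $2$ — and index $2$ is excluded once we note $\sigma$ is nontrivial on $C_0$ while $C_0$ is a single block of $\mathcal C$, and the parity obstruction $4\nmid n$ prevents $K$ and an index-$2$ subgroup from being compatibly nested). Concretely: if $\langle\rho\rangle\lhd\Aut(\Gamma)$ then $\fix_{\Aut(\Gamma)}(\mathcal B)$ would be abelian, forcing $\fix_{G^{(2)}}(\mathcal B)\vert_C\le\Z_k$ to act the same way on every block simultaneously, contradicting the independence we exhibited — unless the only block system in sight has blocks of size $2$, i.e. $2\mid n$ but $4\nmid n$ already rules out a further halving and one checks directly the size-$2$ case cannot be a generalized wreath with $1<K$.

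The main obstacle I expect is the third step: cleanly ruling out the possibility that the extra automorphism we construct happens to normalize $\langle\rho\rangle$ after all. The delicate point is that $N_{S_n}(\langle\rho\rangle)=\{x\mapsto ax+b : a\in\Z_n^*,\ b\in\Z_n\}$ does contain many elements fixing the block system $\mathcal B$ setwise, so one must be careful to produce a $\sigma$ that is nontrivial on exactly one block of $\mathcal C$ and trivial elsewhere, and then to argue no affine map has that fixed-point structure. The hypothesis $4\nmid n$ enters precisely to kill the sole exceptional case — where $K$ has order $2$ and the ``blocks within a block of $\mathcal C$'' degenerate — since if $4\mid n$ one can have genuinely non-normal generalized wreath circulants (indeed the whole point of Theorem \ref{conjfalse1} is that $n\equiv2\pmod 4$ behaves differently from $4\mid n$), so the argument must visibly use that $n/2$ is odd when $n$ is even. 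I would handle this by separating into the case $n$ odd (where the holomorph has odd-order point stabilizers and the rotation $\sigma$ of prime order $q\mid n$ over a single block is manifestly not affine) and the case $n\equiv2\pmod4$ (where one additionally notes $K$ cannot be the unique subgroup of order $2$ while still being properly contained in $H$ with $H$ of prime index, forcing $|K|$ odd and reducing to the previous analysis).
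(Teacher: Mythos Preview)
Your strategy is sound: manufacture $\sigma=\rho^{n/q}\vert_{C_0}$ (translation by a generator of $K$ on one coset $C_0$ of $H$, identity elsewhere), note $\sigma\in\Aut(\Gamma)$ via Lemma~\ref{semiwreathequiv}, and show $\sigma$ cannot lie in $N_{S_n}(\la\rho\ra)=\{x\mapsto ax+b:a\in\Z_n^*,\ b\in\Z_n\}$. This works and is a different route from the paper's. But each justification you actually offer for the key step ``$\sigma$ is not affine'' is wrong as stated.

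Specifically: (i) it is false that an affine map of $\Z_n$ with more than one fixed point is the identity --- $x\mapsto 11x$ on $\Z_{15}$ fixes all multiples of $3$; what is true is only that the fixed set is empty or a coset of a subgroup. (ii) Your ``concretely'' alternative, that normality would make $\fix_{\Aut(\Gamma)}(\mathcal B)$ abelian, is also false: with $n=15$ and $\mathcal B$ the cosets of $\la 5\ra$, the maps $x\mapsto 11x+5$ and $x\mapsto 11x+10$ fix every block of $\mathcal B$ yet do not commute. (iii) For $n$ odd, the point stabilizer in the holomorph has order $\phi(n)$, which is even for $n>2$. (iv) In the $n\equiv 2\pmod 4$ case you deduce $|K|$ odd (when $[\Z_n:H]=2$) but never explain why that finishes the argument.

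Here is the correct completion of your approach. The fixed set of $\sigma$ is exactly $\Z_n\setminus C_0$, of cardinality $n(p-1)/p$ where $p=[\Z_n:H]$; were $\sigma$ affine this cardinality would divide $n$, forcing $p-1\mid p$, i.e.\ $p=2$. So if the original index $[\Z_n:H]$ has an odd prime divisor, choose that as $p$ and you are done. Otherwise $[\Z_n:H]$ is a power of $2$, hence equals $2$ since $4\nmid n$; then $|H|=n/2$ is odd, and the only possible affine map whose fixed set is the nontrivial coset of the index-$2$ subgroup requires $a=1+n/2$, which is even and therefore not a unit. Either way $\sigma$ is not affine, and $\Gamma$ is not normal.

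For comparison, the paper constructs the same elements but argues differently: the permutations $\rho^{n/|K|}\vert_B$ (one per coset $B$ of $H$) generate an elementary abelian group of order $p^{[\Z_n:H]}$ inside the fixer of the $K$-block system (here $p=|K|$), whereas a Sylow $p$-subgroup of the corresponding fixer inside the holomorph has order at most $p^2$ because the normalizer of a $p$-cycle in $S_{p^a}$ is metacyclic; this forces $[\Z_n:H]=2$. The residual case is then handled by computing the commutator $\gamma=\rho^{-1}\delta^{-1}\rho\delta$ with $\delta=\rho^{n/p}\vert_{H}$ and checking that $\gamma\in\la\rho\ra$ only when $p=2$, which together with $K\le H$ and $|H|=n/2$ forces $4\mid n$. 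Your fixed-set argument, once repaired, avoids the Sylow step; the paper's commutator computation makes the appearance of the condition $4\mid n$ more explicit.
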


\begin{proof} As usual, let $\rho:\Z_n\to\Z_n$ by $\rho(i) = i + 1\ (\mod n)$. In this proof, we will use the notation $N(n)$ for the normaliser of $\la \rho \ra$; that is, the group of permutations of $\Z_n$ given by $\{x \rightarrow ax+b: a \in \Z_n^*, b \in \Z_n\}$.  We will show that if a $(K,H)$-generalized wreath circulant is normal, then $4 \mid n$.

We may assume without loss of generality that $K$ is of prime order $p$.    Let ${\cal B}$ be the complete block system of $\la\rho\ra$ formed by the orbits of $\la\rho^m\ra$, where $\vert H\vert = n/m$. Then $\rho^{n/p}\vert_B\in\Aut(\Gamma)$ for every $B\in{\cal B}$.  Set $G = \la\rho,\rho^{n/p}\vert_B:B\in{\cal B}\ra$, and let ${\cal C}$ be the complete block system of $G$ formed by the orbits of $\la\rho^{n/p}\ra$, so that $\fix_G({\cal C}) = \la\rho^{n/p}\vert_B:B\in{\cal B}\ra$, and has order $p^{n/m}$.  Then ${\cal C}$ is also a complete block system of $N(n)$. Let $n = p_1^{a_1}p_2^{a_2}\cdots p_r^{a_r}$ be the prime power decomposition of $n$.  As $N(n) = \Pi_{i=1}^rN(p_i^{a_i})$, we see that a Sylow $p$-subgroup of $\fix_{N(n)}({\cal C})$ is a Sylow $p$-subgroup of $1_{S_{n/p^{a}}}\times N(p^{a})$, where $p = p_j$ and $a=a_j$ for some $j$.  Let ${\cal E}$ be the complete block system of $N(p^{a})$ consisting of blocks of size $p$.  Then a Sylow $p$-subgroup of $\fix_{N(p^{a})}({\cal E})$ has order at most $p^2$ as a Sylow $p$-subgroup of $N(p^{a})$ is metacyclic.  If $\Gamma$ is a normal circulant digraph, then $\la \rho \ra \triangleleft G$ since $G \le \Aut(\Gamma)$, so $G \le N(n)$. This implies that a Sylow $p$-subgroup of $\fix_G({\cal C})$ has order at most $p^2$, and so $p^{n/m} \le p^2$. Since $H>1$ we have $n>m$, so this forces $n = 2m$, and ${\cal B}$ consists of $2$ blocks. Finally, let $\delta = \rho^{n/p}\vert_B$, where $B\in{\cal B}$ with $0\in B$.  If $\Gamma$ is a normal circulant digraph, then $\gamma = \rho^{-1}\delta^{-1}\rho\delta\in\la\rho\ra$, and straightforward computations will show that $\gamma(i) = i + n/p$ if $i$ is even, while $\gamma(i) = i - n/p$ if $i$ is odd.  As $\gamma\in\la\rho\ra$, we must have that $n/p \equiv -n/p \pmod{n}$, and so $2n/p\equiv 0\pmod{n}$.  This then implies that $p = 2$ and so $4\vert n$ as required.
\end{proof}

We first show that Conjecture \ref{conjecture} is false for
circulant digraphs of order $n$, where $n\equiv 2$ (mod $4$) has a fixed number of distinct prime factors.

\begin{thrm}\label{conjfalse1}
Let $n = 2p_1^{e_1}p_2^{e_2}\cdots p_r^{e_r}$, where each $p_i$ is a distinct odd prime and $r$ is fixed.  Then

$$\lim_{n\to\infty,r{\rm\ fixed}}\frac{\vert\NonNor(n)\vert}{\vert\Nor
(n) \backslash \DRR(n)\vert} \ge \frac{1}{4(2^r - 1)}.$$
\end{thrm}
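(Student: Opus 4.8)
The plan is to sandwich the ratio between a lower bound on $|\NonNor(n)|$, coming from a large family of generalized wreath circulant digraphs (all non-normal by Lemma \ref{semi-non-normal}), and an upper bound on $|\Nor(n)\setminus\DRR(n)|$, coming from counting circulant digraphs that admit an extra multiplier $x\mapsto ax$. Write $n=2m$ with $m$ odd, and let $q$ be the smallest prime divisor of $m$ (equivalently, the smallest odd prime divisor of $n$); since $r$ is fixed we may take $n$ as large as we wish.

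\emph{Lower bound.} I would take $K=\la n/2\ra$, the unique subgroup of order $2$, and $H=\la q\ra$, of order $n/q$, so that $1<K\le H<\Z_n$. Letting $S\cap H$ range over all subsets of $H\setminus\{0\}$ and $S\setminus H$ range over all unions of the $(n-n/q)/2$ cosets of $K$ lying outside $H$ produces $2^{n/q-1}\cdot 2^{(n-n/q)/2}=2^{n/2+n/(2q)-1}$ pairwise-distinct $(K,H)$-generalized wreath circulant digraphs. Since $n$ is composite and $4\nmid n$, Lemma \ref{semi-non-normal} shows each of them is non-normal, so $|\NonNor(n)|\ge 2^{n/2+n/(2q)-1}$.

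\emph{Upper bound.} If $\Gamma\in\Nor(n)\setminus\DRR(n)$ then $\la\rho\ra$ is properly contained in $\Aut(\Gamma)\le N(n)$, the normalizer of $\la\rho\ra$; hence some $x\mapsto ax$ with $a\in\Z_n^*\setminus\{1\}$ lies in $\Aut(\Gamma)$, and therefore the connection set $S$ is a union of orbits of $\la a\ra$ acting by multiplication on $\Z_n\setminus\{0\}$. Writing $o(a)$ for the number of such orbits, this gives $|\Nor(n)\setminus\DRR(n)|\le\sum_{a\ne 1}2^{o(a)}$. By the orbit-counting lemma, $o(a)+1=\frac1d\sum_{i=0}^{d-1}\gcd(a^i-1,n)$, where $d$ is the order of $a$. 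The key estimate is that whenever $a^i\ne 1$ one has $\gcd(a^i-1,n)\le n/q$: this gcd divides $n$, it is not $n$ (as $a^i\ne 1$), and it is not $n/2$ (that would force $a^i=1+n/2$, which is even and so not a unit), so it is at most the third largest divisor of $n$, namely $n/q$. Consequently $o(a)\le\frac n2+\frac n{2q}-1$ when $a$ is an involution, whereas when $d\ge 3$ the quantity $\frac1d(n+(d-1)n/q)-1$ is decreasing in $d$, so $o(a)\le\frac n3+\frac{2n}{3q}-1=(\frac n2+\frac n{2q}-1)-\frac{n(q-1)}{6q}\le(\frac n2+\frac n{2q}-1)-\frac n9$. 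Since $\Z_n^*\cong\prod_{i=1}^r\Z_{p_i^{e_i}}^*$ is a product of $r$ nontrivial cyclic groups of even order, it has exactly $2^r-1$ non-identity involutions, and fewer than $n$ elements of order $\ge 3$; hence
$$|\Nor(n)\setminus\DRR(n)|\le (2^r-1)2^{n/2+n/(2q)-1}+n\cdot 2^{n/2+n/(2q)-1-n/9}=2^{n/2+n/(2q)-1}\bigl((2^r-1)+n2^{-n/9}\bigr).$$
Dividing the two bounds yields a ratio of at least $1/\bigl((2^r-1)+n2^{-n/9}\bigr)$, which since $n2^{-n/9}\to 0$ tends to $1/(2^r-1)$; thus the liminf is at least $1/(2^r-1)\ge 1/(4(2^r-1))$, so the factor $4$ in the statement is comfortable slack.

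\emph{Main obstacle.} The delicate step is the upper bound: the crude estimate $\sum_{a\ne 1}2^{o(a)}\le\varphi(n)\cdot 2^{\max_a o(a)}$ only yields a ratio of order $1/n$, so one must isolate the $2^r-1$ involutions (the only multipliers that can reach the dominant exponent $n/2+n/(2q)$) from all other elements of $\Z_n^*$. Making that separation work relies on the two gcd facts above — that $\gcd(a^i-1,n)$ can never equal $n/2$, so it is at most $n/q$, and the consequent gain of a factor $2^{n/9}$ for every element of order $\ge 3$. A secondary care point is that the generalized wreath family in the lower bound must be built from the \emph{smallest} prime dividing $m$, so that its exponent $n/2+n/(2q)$ matches rather than undershoots the involution exponent.
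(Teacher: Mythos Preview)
Your proof is correct and follows the same overall strategy as the paper: bound $|\NonNor(n)|$ from below by the $(K,H)$-generalized wreath family with $|K|=2$ and $H=\la q\ra$ (this is exactly the paper's construction, with your $q$ playing the role of the paper's $p$), and bound $|\Nor(n)\setminus\DRR(n)|$ from above by the circulants admitting a nontrivial multiplier, isolating the $2^r-1$ involutions as the dominant contribution.

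The execution of the upper bound differs. The paper splits into four cases (order~$2$ with a fixed point, order~$3$ with a fixed point, order~$\ge 5$ with a fixed point, and no nonzero fixed point) and computes orbit counts separately in each, obtaining exponents $(p+1)n/(2p)$, $(p+2)n/(3p)$, $7n/15$, and $n/3$. You instead apply the orbit-counting lemma uniformly and use the single observation that $\gcd(a^i-1,n)$ can never equal $n/2$ (since $1+n/2$ is even, hence not a unit), so it is at most $n/q$; this immediately gives $o(a)\le n/2+n/(2q)-1$ for involutions and a gap of at least $n/9$ for all higher orders. Your route is shorter and, because you keep the ``$-1$'' from excluding the orbit $\{0\}$ and do not double the involution term to absorb the order-$3$ case, it actually yields the sharper limit $1/(2^r-1)$ rather than the paper's $1/(4(2^r-1))$; the factor $4$ in the statement is indeed slack.
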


\begin{proof}
By Lemma \ref{semi-non-normal}, we have $|\NonNor(n)| \ge |\SW(n)|$.
We claim that $|\SW(n)| \ge 2^{n/2+n/(2p)-1}$, where $p$ is the smallest nontrivial divisor of $n/2$.
To see this, we construct this number of distinct generalized circulant digraphs of order $n$, as follows: $\cal B$ will be the block system formed by the orbits (cosets) of $\langle n/2 \rangle$, and $\cal C$ the block system formed by the orbits (cosets) of $\la p \ra$.  Since there are $n/p$ elements in each block of $\cal C$, there are $2^{n/p-1}$ choices for $S \cap C_0$, where $C_0$ is the block of $\cal C$ that contains 0.  Since there are $n/2-n/(2p)$ orbits (cosets) of $\la n/2 \ra$ that are not in $C_0$, there are $2^{n/2-n/(2p)}$ choices for $S-C_0$ that create a generalized circulant digraph with this choice of $\cal B$ and $\cal C$.  These $2^{n/p+n/2-n/(2p)-1}=2^{n/2+n/(2p)-1}$ generalized circulant digraphs are all distinct (though not necessarily nonisomorphic), so there are indeed at least this many distinct generalized circulant digraphs of order $n$.

Let $S(n)$ be the set of all circulant digraphs of order $n$ whose automorphism group contains a nontrivial automorphism of $\Z_n$.  Clearly then $\vert S(n)\vert\ge \vert\Nor(n) \backslash \DRR(n)\vert$.  We now seek an upper bound on $\vert S(n)\vert$.  Observe that if $\Gamma$ is a circulant digraph whose automorphism group contains a nontrivial automorphism of $\Z_n$, then $\Aut(\Gamma)$ contains a nontrivial automorphism of $\Z_n$ of prime order.

Let
$a \in \Z_n^*$ have prime order $\ell$, $a \neq 1$, and let $\alpha: \Z_n \to \Z_n$ be defined by $\alpha(i)=ai$. We first consider the case that $\alpha$ has a fixed point other than 0.
If $\alpha$ fixes a point $i$, so that  $ai\equiv i$ (mod $n$), then $(a-1)i\equiv 0$ (mod $n$). If
gcd$(i,n)=1$, then $a=1$ and $\alpha$ is the identity, a contradiction. Otherwise,
gcd$(i,n)=m$, for some non-trivial integer $m$, which clearly implies
$i\in \langle m \rangle$.
In order for $\alpha$ be an
automorphism, $a= sn/m+1$ for some $0< s <m$ must be a unit, i.e., gcd$(n,sn/m+1) =
1$. Note that $m\neq 2$ for our choice of $n$, since if $m=2$ then $s=1$, but
gcd$(n,n/2+1)\ge 2$. So $m$ must be a divisor of $n$ that is
greater than $2$ and less than $n$. Now, $\alpha$ fixes $n/m$ points $\{0, m, \cdots, (n/m-1)m\}$,
and since the order of $\alpha$ is prime ($\ell$), every non-singleton orbit of $\alpha$ has length $\ell$.
So $\alpha$ has
$n(1-1/m)/\ell$ orbits of length $\ell$, and $n/m+n/\ell-n/(m\ell)$ orbits in total.
It will be necessary to separate out the cases where $\ell=2$ and $\ell=3$.  If $\ell=2$ then $1/m+1/\ell-1/(m\ell)=1/m+1/2-1/(2m)=1/2+1/(2m)\le (p+1)/(2p)$ since $m \ge p$ ($p$ is still the smallest nontrivial divisor of $n/2$), so if $\alpha$ has order 2 then $\alpha$ has at most $(p+1)n/(2p)$ orbits.  If $\ell=3$ then $1/m+1/\ell-1/(m\ell)=1/m+1/3-1/(3m)=1/3+2/(3m)\le (p+2)/(3p)$ since $m \ge p$, so if $\alpha$ has order 3 then $\alpha$ has at most $(p+2)n/(3p)$ orbits.  And if $\ell \ge 5$ then  $1/m+1/\ell-1/(m\ell)\le 1/m+1/5-1/(5m)=1/5+4/(5m)\le (m+4)/(5m)\le 7/15$ since $m \ge 3$, so if $\alpha$ has order greater than 3 then $\alpha$ has at most $7n/15$ orbits.

Finally,  notice that if $\alpha$ fixes only $0$, it will have 1 fixed point and $n-1$ points that are not fixed.  If $\alpha$ has order $2$ then its orbits are all of length 1 or 2, and since $n-1$ is odd, it cannot be partitioned into orbits of length 2.  So an element of order 2 must have some fixed point other than 0.  Hence if $\alpha$ fixes only $0$, it must have order at least 3, so each non-singleton orbit must have length at least 3.  Hence $\alpha$ has
at most $\lfloor(n-1)/3\rfloor<n/3$ orbits other than $\{0\}$.

From these bounds on the number of orbits of $\alpha$, we can deduce bounds on the number of normal circulant digraphs of order $n$ that admit $\alpha$ as an automorphism.
We now want to sum the upper bounds on the numbers of normal circulant digraphs of order $n$ that admit $\alpha$, over all automorphisms $\alpha$ of $\Z_n$ that have prime order.
In order to do so, we split the set $T$ of all elements of $\Z_n^*$ that have prime order, into disjoint subsets: $U$ (consisting of all elements of order 2 that have fixed points); $V$ (consisting of all elements of order 3 that have fixed points); $W$ (consisting of all elements of order 5 or greater that have fixed points) and $X$ (consisting of all elements that have no fixed points other than $0$).
Notice that $|T| \le |\Z_n^*| \le \phi(n) <n$, so the order of each set is less than $n$.  We will need slightly better estimates for $|U|$ and $|V|$; but first, observe that
$$|S(n)|\le \sum_{\alpha \in U}2^{(p+1)n/(2p)}+\sum_{\alpha\in V} 2^{(p+2)n/(3p)} + \sum_{\alpha \in W} 2^{7n/15} + \sum_{\alpha \in X}2^{n/3}.$$
Notice that $\Z_n^*=\Z_{p_1^{e_1}}^*\times \ldots \times \Z_{p_r^{e_r}}^*$ and each $\Z_{p_i^{e_i}}^*$ is cyclic, so contains a unique element of order $2$, and at most one element of order $3$.  Any element of order 2 in $\Z_n^*$ must be a product of elements of order 1 or 2 from the $\Z_{p_i}^*$, at least one of which must have order 2.  So there are $2^r-1$ elements of order 2 in $\Z_n^*$.  Similarly, there are at most $2^r-1$ elements of order 3 in $\Z_n^*$.  Thus the above sum yields
$$|S(n)| \le (2^r-1)2^{(p+1)n/(2p)}+(2^r-1)2^{(p+2)n/(3p)} + n2^{7n/15} + n2^{n/3}.$$
Since $p>1$ we have $(p+2)/(3p)<(p+1)/(2p)$, so
$$|S(n)| \le (2^r-1)2^{(p+1)n/(2p)+1}+ n2^{7n/15} + n2^{n/3}.$$
Now
\begin{eqnarray*}
\lim_{n\to\infty,r{\rm\ fixed}}\frac{\vert\NonNor(n)\vert}{\vert\Nor
(n) \backslash \DRR(n)\vert} & \ge & \lim_{n\to\infty,r{\rm\ fixed}}\frac{2^{n/2 + n/(2p)-1}}{\vert S(n)\vert}\\
& \ge & \lim_{n\to\infty,r{\rm\ fixed}}\frac{2^{n/2 + n/(2p)-1}}{(2^r-1)2^{(p+1)n/(2p)+1}+ n2^{7n/15} + n2^{n/3}}\\
& = & \lim_{n\to\infty,r{\rm\ fixed}}\frac{2^{-1}}{2(2^r - 1) + n2^{ - n/30 - n/(2p)}+n2^{-n/6-n/(2p)}}\\
& = & \frac{1}{4(2^r - 1)}.
\end{eqnarray*}
\end{proof}

A {\bf safe prime} is a prime number $p = 2q + 1$, where $q$ is also prime.

We now show that it is not true that almost all circulant graphs of order $p$ or $p^2$, where $p$ is a safe prime, or of order $3^k$, are normal.  This shows that \cite[Theorem 3.5]{Dobson2010b} is not correct.  We provide a correct statement of \cite[Theorem 3.5]{Dobson2010b} as well as point out explicitly where ``gaps" occur in the proof.  As a consequence, much of the following result is essentially the same as the proof of \cite[Theorem 3.5]{Dobson2010b}.  The entire argument is included for completeness.

\begin{thrm}\label{primepower}
Let $S = \{p,p^2:p{\rm\ is\ a\ safe\ prime}\}\cup\{3^k:k \in \N\}$, $T$ the set of all powers of odd primes, and $R = T \setminus S$.  Then
$$\lim_{n\in R,n\to\infty}\frac{\vert\NonNorG(n)\vert}{\vert \ACG(n) \setminus \Small(n)\vert} = 0.$$

\noindent Additionally, if $n\in S$, then more than one fifth of all elements of $\ACG(n) \setminus \Small(n)$ are in $\NonNorG(n)$.
\end{thrm}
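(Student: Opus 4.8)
The plan is to reduce both assertions to the case $n=p^k$ a power of an odd prime, and then to combine the structural dichotomy of Theorem~\ref{maintool} with orbit counts in $\Z_{p^k}^{\ast}$. The first step is the structural observation: if $\Gamma$ is a non-normal circulant graph of order $p^k$, then, since $\Aut(\Gamma)$ contains the regular cyclic subgroup $\la\rho\ra$ and $p^k$ is a prime power, case~(1) of Theorem~\ref{maintool} forces $r=1$ and $\Aut(\Gamma)=G_1$, so either $\Aut(\Gamma)\cong S_{p^k}$ (hence $\Gamma\in\{K_{p^k},\bar K_{p^k}\}$) or $\Aut(\Gamma)$ has a normal regular cyclic subgroup (hence $\Gamma$ is normal), while case~(2) forces $\Gamma\in\SWG(p^k)$ by Corollary~\ref{semiwreathchar}. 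As $K_{p^k}$ and $\bar K_{p^k}$ are themselves wreath products when $k\ge 2$, this yields $\NonNorG(p^k)=\SWG(p^k)$ for $k\ge 2$ and $\NonNorG(p)=\{K_p,\bar K_p\}$ for $p\ge 5$; moreover, by Lemma~\ref{semi-non-normal} (which applies to graphs, a graph being a digraph with $S=-S$) we have $\SWG(p^k)\cap\NorG(p^k)=\emptyset$ for odd $p$ and $k\ge 2$, so $\ACG(p^k)\setminus\Small(p^k)=\NonNorG(p^k)\sqcup(\NorG(p^k)\setminus\Small(p^k))$. Thus both parts reduce to comparing $|\SWG(p^k)|$ with $|\NorG(p^k)\setminus\Small(p^k)|$.

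For the limit assertion ($n\in R$) I bound the numerator by $|\NonNorG(p^k)|\le|\SWG(p^k)|+2\le(\log_2^2 n)\,2^{(2p^{k-1}-p^{k-2}+1)/2}$ via Corollary~\ref{graphsemiwreathhey} (for $k=1$ the numerator is $2$, since $\SWG(p)=\emptyset$). For the denominator I manufacture normal non-small graphs: choose a subgroup $M\le\Z_{p^k}^{\ast}$ with $\la\iota\ra\subsetneq M$ that maximises the number $o(M)$ of $M$-orbits on $\Z_{p^k}\setminus\{0\}$ --- the order-$4$ subgroup when $4\mid p-1$, giving $o(M)=(p^k-1)/4$; otherwise (so $p\equiv 3\pmod 4$) the subgroup of order $2\ell$, with $\ell$ the least odd prime dividing $p-1$, giving $o(M)=(p^k-1)/(2\ell)$. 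Since $\iota\in M$, every union of $M$-orbits is a legitimate symmetric connection set, giving $2^{o(M)}$ circulant graphs, all non-small; by the structural dichotomy all but at most $|\SWG(p^k)|+2$ of them are normal, so $|\ACG(p^k)\setminus\Small(p^k)|\ge 2^{o(M)}-|\SWG(p^k)|-2$. It then remains to verify in each case that $o(M)-\log_2|\SWG(p^k)|\to\infty$: when $4\mid p-1$ this follows from $p>4$; when $p\equiv 3\pmod 4$ and $p$ is not a safe prime (and $p\ne 3$), $(p-1)/2$ is composite so $\ell\le\sqrt{(p-1)/2}$ and again $p>2\ell$; and when $p$ is a safe prime with $p\equiv 3\pmod 4$ --- in which case necessarily $k\ge 3$, as $p,p^2\in S$ --- one has $\ell=(p-1)/2$, so $o(M)=1+p+\cdots+p^{k-1}$, which exceeds $\log_2|\SWG(p^k)|$ by at least $3p^{k-2}/2\to\infty$. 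In every case $|\SWG(p^k)|$ is bounded by $2^{o(M)}$ divided by a quantity tending to infinity, so the ratio tends to $0$.

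For $n\in S$ I split into three cases (the case $n=3$ being degenerate, as $\ACG(3)\setminus\Small(3)=\emptyset$). If $n=p$ with $p$ a safe prime, then $\Z_p^{\ast}$ has no subgroup strictly between $\la\iota\ra$ and $\Z_p^{\ast}$, so the only non-small circulant graphs of order $p$ are those admitting all of $\Z_p^{\ast}$, namely $K_p$ and $\bar K_p$, which are non-normal; hence the proportion is $1$. If $n=3^k$ with $k\ge 2$, the unique minimal subgroup of $\Z_{3^k}^{\ast}$ above $\la\iota\ra$ is $M_6=\la\iota,\,1+3^{k-1}\ra$, and a short computation shows each $M_6$-orbit on the units of $\Z_{3^k}$ is a union of two cosets of $\la 3^{k-1}\ra$; hence every circulant graph admitting $M_6$ is a $(\la 3^{k-1}\ra,\la 3\ra)$-generalized wreath circulant, and so, by Lemma~\ref{semi-non-normal}, non-normal. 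Since every normal non-small circulant graph of order $3^k$ has $\Aut(\Gamma)\cap\Z_{3^k}^{\ast}\supsetneq\la\iota\ra$, hence admits $M_6$, there are no normal non-small circulant graphs of order $3^k$, and again the proportion is $1$. Finally, if $n=p^2$ with $p=2q+1$ a safe prime, $\Z_{p^2}^{\ast}$ has exactly two minimal subgroups above $\la\iota\ra$, of orders $2q$ and $2p$; every graph admitting the order-$2p$ subgroup is a wreath product over the $\la p\ra$-blocks (each of its orbits on the units being a union of two $\la p\ra$-cosets), hence non-normal, whereas of the $2^{p+1}$ graphs admitting the order-$2q$ subgroup $M_{2q}$, exactly $4$ --- the wreath products $\{\bar K_p,K_p\}\wr\{\bar K_p,K_p\}$ --- have automorphism group meeting $\Z_{p^2}^{\ast}$ in all of $\Z_{p^2}^{\ast}$, and the remaining $2^{p+1}-4$ are normal, since each $M_{2q}$-orbit on the units has size $p-1<p$ and so none of these graphs is a generalized wreath circulant. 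Together with the exact count $|\SWG(p^2)|=|\NonNorG(p^2)|=2^{p-1}$ (the wreath products $\Gamma_1\wr\Gamma_2$ with $\Gamma_1,\Gamma_2\in\ACG(p)$), this gives $|\ACG(p^2)\setminus\Small(p^2)|=2^{p-1}+(2^{p+1}-4)=5\cdot 2^{p-1}-4$, so the proportion equals $2^{p-1}/(5\cdot 2^{p-1}-4)>1/5$.

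The main obstacle is the case $n=p^2$ with $p$ a safe prime, where the bound $1/5$ is essentially tight --- the proportion $2^{p-1}/(5\cdot 2^{p-1}-4)$ decreases to $1/5$ as $p\to\infty$ --- so the size of $\NorG(p^2)\setminus\Small(p^2)$ must be pinned down exactly: one has to show that precisely the four ``double wreath'' graphs among those admitting $M_{2q}$ fail to be normal, that every graph admitting the order-$2p$ subgroup is a wreath product, and that $\SWG(p^2)$ contributes nothing more. A secondary difficulty, in part~(a), is arranging uniformly over all $n=p^k\in R$ a single subgroup $M\le\Z_{p^k}^{\ast}$ whose orbit count beats $\log_2|\SWG(p^k)|$ by a margin tending to infinity; the borderline sub-cases are the safe-prime powers with $k\ge 3$ (where one must switch from the order-$4$ choice to the order-$2q$ choice) and the Fermat-prime powers.
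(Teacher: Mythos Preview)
Your approach is correct and follows the same skeleton as the paper: the structural identification $\NonNorG(p^k)=\SWG(p^k)$ for $k\ge2$ (and $\{K_p,\bar K_p\}$ for $k=1$) via Theorem~\ref{maintool}, combined with orbit-counting in $\Z_{p^k}^{\ast}$ to lower-bound $|\ACG(p^k)\setminus\Small(p^k)|$, and the same three-way split for $n\in S$. The differences are tactical. For the numerator you invoke Corollary~\ref{graphsemiwreathhey}, whereas the paper computes $|\NonNorG(p^k)|$ exactly by determining the orbits of $\la\iota,1+p^{k-1}\ra$; both give the leading exponent $p^{k-1}-p^{k-2}/2$, so nothing is lost. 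For the denominator the paper uses the full order-$(p-1)$ subgroup for $k\ge3$ and a subgroup of order $2s$ (with $rs=(p-1)/2$) for $k\le2$, while you use order $4$ or $2\ell$ uniformly; your choice gives a sharper bound when $4\mid p-1$ and coincides with the paper's otherwise. One small slip: the subtraction in $|\ACG(p^k)\setminus\Small(p^k)|\ge 2^{o(M)}-|\SWG(p^k)|-2$ is unnecessary, since all $2^{o(M)}$ graphs admitting $M$ are already non-small. Your treatment of $n=p^2$ for $p$ safe is actually sharper than the paper's --- you pin the proportion down to exactly $2^{p-1}/(5\cdot2^{p-1}-4)$, where the paper only bounds it below by $1/5$. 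The one place that wants an extra sentence is the claim that graphs admitting $M_{2q}$ (other than the four double-wreath graphs) are not generalized wreath circulants: ``orbit size $p-1<p$'' by itself does not suffice, since a union of orbits might still be a union of cosets; you need the observation that $\gcd(p,p-1)=1$ forces any common refinement to be trivial.
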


\begin{proof}  Let $n=p^k$, where $p$ is an odd prime.

First suppose that $k = 1$. If $p$ is a safe prime, then $\Z_p^*$ is cyclic of order $2q$, so every element has order $2$, $q$, or $2q$.  Since a circulant graph must have $\iota$ (multiplication by $-1$) in its automorphism group, if a circulant graph of order $p$ is not in $\Small(p)$ then it must have an automorphism $\alpha$ of order $q$ or $2q$ from $\Z_p^*$ in its automorphism group.  Since the orbit of length $q$ that contains $1$ in $\Z_p^*$ does not contain $-1$, the orbits of $\la \alpha, \iota\ra$ have length $1$ (the orbit of $0$) and $2q=p-1$ (everything else).  So the graph must be either $K_p$ or its complement.  Both of these are non-normal circulants (with automorphism group $S_p$), so in this case all elements of $\ACG(n)\setminus \Small(n)$ are in $\NonNorG(n)$. (The proof of \cite[Theorem 3.5]{Dobson2010b} overlooks this case.)

Now if $p$ is not a safe prime, then $(p-1)/2$ is a composite number, say $(p-1)/2=rs$ where $1 < r\le s < (p-1)/2$.  As $p$ tends to infinity, so does $s$. Now, $\Z_p^*$ is cyclic of order $p-1$, so has an element, $\alpha$ say, of order $2r$. The action of $\alpha$ on the elements of $\Z_p$ will have $s+1$ orbits (the cosets of $\la 2r \ra$ in $\Z_p^*$, together with $0$).  Since the order of $\alpha$ is even, $-1 \in \la \alpha\ra$, so if we let $S$ be any union of these orbits, the circulant digraph on $\Z_p$ with connection set $S$ will be a graph, and since $|\alpha|>2$, this graph will not be in $\Small(p)$.  Hence $|\ACG(p)\setminus\Small(p)| \ge 2^{s+1}> 2^{\sqrt{(p-1)/2}}$.  Meanwhile, if $\Aut(\Gamma) \not< \AGL(1,p)$ then $\Aut(\Gamma) = S_p$ by \cite{Alspach1973}, and so there are only two non-normal Cayley graphs on $\Z_p$, namely $K_p$ or its complement. Clearly $2/(2^{\sqrt{(p-1)/2}})$ tends to $0$ as $p$ tends to infinity.

Now let $k\ge 2$. Through the rest of this proof, let $\alpha:\Z_n \rightarrow \Z_n$ be defined by $\alpha(i)=(p^{k-1}+1)i$. Using the binomial theorem, it is easy to see that $|\alpha|=p$.  Furthermore, $\alpha$ fixes every element of $\la p \ra$, and fixes setwise every coset of $\la p^{k-1}\ra$. Since $\alpha$ has order $p$ and $\alpha$ does not fix any element of any coset of $\la p^{k-1}\ra$ that is not in $\la p \ra$, it follows that the orbits of $\alpha$ on each coset of $\la p^{k-1} \ra$ that is not in $\la p \ra$ have length $p$, so if $\alpha \in \Aut(\Gamma)$ for some circulant graph $\Gamma$ of order $n$, then $\Gamma$ is a $(\la p^{k-1}\ra,\la p \ra)$-generalized wreath circulant digraph, and in fact by Lemma \ref{semi-non-normal}, $\Gamma$ is not normal. Conversely, if $\Gamma$ is a non-normal circulant graph of order $n$, then by Theorem \ref{maintool}, the automorphism group of a circulant graph of order $n$ either falls into category
(1) with a single factor in the direct product (since $n=p^k$ does not permit coprime factors) and consequently since it is non-normal, is complete (or empty), or category
(2) so by Corollary \ref{semiwreathchar} is a generalized wreath circulant.  Since complete and empty graphs are generalized wreath circulants, $\Gamma$ must be a generalized wreath circulant graph. It is straightforward to verify using the definition of a generalized wreath circulant, that $\alpha \in \Aut(\Gamma)$.  Notice also that if $p$ divides the order of some element $b$ of $\Z_n^*$ such that multiplication by $b$ is in $\Aut(\Gamma)$, then $\alpha \in \Aut(\Gamma)$, since $\Z_n^*$ is cyclic of order $(p-1)p^{k-1}$ so $p^{k-1}+1$ generates the unique subgroup of order $p$ in $\Z_n^*$.

Now we calculate $|\NonNorG(n)|$. As noted in the previous paragraph, if $\Gamma \in \NonNorG(n)$ then $\alpha \in \Aut(\Gamma)$, and the orbits of $\alpha$ all have length 1 or length $p$.
Now since multiplication is commutative, if $\iota$ is as usual the automorphism given by multiplication by $-1$, then $\iota$ will have a well-defined action on the orbits of $\la \alpha \ra$, and since $|\alpha|=p$ is odd, $\iota \not \in \la \alpha \ra$, so $\iota$ will exchange pairs of orbits of $\la \alpha \ra$, except the orbit $\{0\}$.  Consequently,  $\la \alpha, \iota\ra$ will have one orbit of length 1 ($\{0\}$); $(p^{k-1}-1)/2$
orbits of length $2$ (whose union is $\la p \ra\setminus\{0\}$); and $(p^k-p^{k-1})/(2p)$ orbits of length $2p$ (everything else).  So $\la \alpha, \iota\ra$ has a total of exactly $p^{k-1}+(1-p^{k-2})/2$ orbits.  Since we have shown that the non-normal circulant graphs of order $p^k$ are precisely the graphs that have $\la \alpha, \iota\ra$ in their automorphism group, there are exactly $2^{p^{k-1}+(1-p^{k-2})/2}$ non-normal circulant graphs of order $p^k$.

Now we find a lower bound for $|\ACG(n)\setminus\Small(n)|$ when $n \in R$ and $k>2$.  Since $p$ is an odd prime, $\Z_{p^k}^*$ is cyclic of order $(p-1)p^{k-1}$.  Since $p>3$, let $b$ be an element of order $p-1$ in $\Z_{p^k}^*$,
 and define $\beta:\Z_{p^k}\to\Z_{p^k}$ by $\beta(x) = bx$.  Note that $\iota\in\la\beta\ra$ since $\beta$ has even order, and $\beta\neq \iota$ since $p>3$ (the proof of \cite[Theorem 3.5]{Dobson2010b} overlooks the fact that $\beta=\iota$ when $p=3$).  Clearly, $\beta$ fixes $0$, and since the order of $\beta$ is $p-1$, every other orbit of $\beta$ has length at most $p-1$, so $\beta$ has at least $1 + (p^k - 1)/(p-1)$ orbits.  Thus there are at least $2^{1 + (p^k - 1)/(p-1)}$ circulant graphs of order $p^k$ whose automorphism group contains $\beta$, and so there are at least $2^{1 + (p^k - 1)/(p-1)}$ circulant graphs of order $p^k$ that are not in $\Small(p^k)$, $p> 3$.  Note that as $k\ge 2$, $(p^k - 1)/(p-1)\not = 1$.  Then
\begin{eqnarray*}
\lim_{p^k\to\infty}\frac{\vert \NonNorG(p^k)\vert }{\vert \ACG(p^k) \setminus \Small(p^k)\vert} & \le &  \lim_{p\to\infty}\frac{2^{p^{k-1} + (1 - p^{k-2})/2}}{2^{1 + (p^k - 1)/(p-1)}}\\
& = &  \lim_{p^k\to\infty}\frac{1}{2^{(3p^{k-2}+1)/2 + \sum_{i=0}^{k-3}p^i}}.
\end{eqnarray*}
Thus as $k\ge 3$, the result follows. (The proof of \cite[Theorem 3.5]{Dobson2010b} concludes the above limit is $1$ in all cases -- hence the gap in that theorem when $k = 2$).

We now consider the case $p=3$.  We have $\Z_{3^k}^*$ is cyclic of order $2\cdot 3^{k-1}$.  For a circulant graph $\Gamma$ of order $3^k$ to be normal but not in $\Small(3^k)$, there must be an automorphism of $\Gamma$ that corresponds to multiplying by some element, $b$ say, of $\Z_{3^k}$.  As noted previously (in the paragraph about $\alpha$), if $|b|$ is divisible by 3, then $\alpha \in \Aut(\Gamma)$ and hence $\Gamma$ is not normal.  But the only possible order for $b$ that is not divisible by 3 is 2, which corresponds to $b=-1$.  This shows that every circulant graph of order $3^k$ that is normal, is in $\Small(3^k)$, so in other words, every element of $\ACG(3^k)\setminus\Small(3^k)$ is in $\NonNorG(3^k)$.

For the remainder of the proof we suppose that $k=2$ and $p>3$.  Substituting $k=2$ into our formula for $|\NonNorG(n)|$, we conclude that $|\NonNorG(p^2)|=2^p$.

If $p$ is a safe prime, $p = 2q + 1$ with $q$ prime, then $\la \alpha \ra$ is the unique subgroup of order $p$ in $\Z_{p^2}^*$, so any subgroup of $\Z_{p^2}^*$ that contains $-1$ but does not contain $p+1$, must have even order not a multiple of $p$.  Since $\Z_{p^2}^*$ is cyclic of order $p(p-1)=2pq$, the group of order $2q$ is the only such subgroup. Call this group $B$. Then if $\Gamma$ is normal and does not have automorphism group as small as possible, then $\Aut(\Gamma) = B\cdot(\Z_{p^2})_L$.  Now, $B$ fixes $0$ and since $B$ has order $2q$ and is cyclic, the other orbits of $B$ all have length precisely $2q$ (it is not hard to show that the only elements of $\Z_{p^2}^*$ that fix anything but $0$ are $1$ and the elements of order $p$; this forces the orbit lengths of $B$ to be the order of $B$), so there are $1+(p^2-1)/2q=2+p$ orbits of $B$, and hence fewer than $2^{2+p}$ normal circulant graphs of order $p^2$ that are not in $\Small(p^2)$ (the ``fewer than" is due to the fact that some of these graphs are not normal, for example $K_{p^2}$). Hence the proportion of non-normal circulant graphs of order $p^2$ in the set of all circulant graphs of order $p^2$ that are not in $\Small(p^2)$ is more than $2^{p}/(2^p + 2^{p+2}) = 1/5$, as claimed.

Suppose now that $p$ is not a safe prime. Then there exists $b \in \Z_{p^2}^*$ of order $p-1$. Since $p$ is not a safe prime, there exists $1 <s\le r <(p-1)/2$ such that $rs=(p-1)/2$.  Let $\beta$ be the map defined by multiplication by $b$. As every non-singleton orbit of $\la\beta\ra$ has length $p-1$ (as shown for the orbits of $B$ in the preceding paragraph), every nonsingleton orbit of $\la\beta^r\ra$ has length $(p-1)/r$.  Then $\beta^r$ has $r(p+1)$ orbits not including $\{0\}$ and since $|b^r|=2s>2$, $\beta^r \neq \iota$.  We conclude that there are at least $2^{ r(p+1)}$ graphs of order $p^2$ not contained in $\Small(p^2)$.  As there are $2^p$ non-normal circulant graphs of order $p^2$ and $r>1$,
$$\lim_{p^2\to\infty}\frac{\vert \NonNorG(p^2)\vert }{\vert \ACG(p^2) \setminus \Small(p^2)\vert} \le  \lim_{p\to\infty}\frac{2^p}{2^{r(p+1)}} = 0.$$
Since $r \ge \sqrt{(p-1)/2}$, we may now conclude that
$$\lim_{n\in R,n\to\infty}\frac{\vert\NonNorG(n)\vert}{\vert \ACG(n) \setminus \Small(n)\vert} = 0.$$
\end{proof}

We now verify that Conjecture \ref{conjecture} does hold for circulant digraphs of order $n$, and also for circulant graphs of order $n$, for large families of integers.

\begin{thrm}\label{normaldi}
Let $n$ be any odd integer such that $9\nmid n$. Then almost all
circulant digraphs of order $n$ that are not DRR's are normal circulant digraphs.
\end{thrm}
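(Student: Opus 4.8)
The plan is to prove the equivalent statement that $|\NonNor(n)|/|\ACD(n)\setminus\DRR(n)|\to 0$ along odd $n$ with $9\nmid n$, since a DRR is normal, so a non-DRR circulant fails to be normal precisely when it lies in $\NonNor(n)$. For the denominator I would use only the cheapest available supply of normal non-DRR circulants: every circulant \emph{graph} of odd order $n$ is a circulant digraph whose automorphism group contains the order-$2$ automorphism $x\mapsto -x$ of $\Z_n$ (order $2$ because $n$ is odd), hence is not a DRR; there are $2^{(n-1)/2}$ such graphs, so $|\ACD(n)\setminus\DRR(n)|\ge 2^{(n-1)/2}$. We do not even need to decide which of these are normal. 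It therefore suffices to show $|\NonNor(n)|\le 2^{cn}$ for a fixed constant $c<1/2$ and all large $n$.

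To bound $|\NonNor(n)|$ I would first establish $\NonNor(n)\subseteq\SW(n)\cup\DW(n)$. Apply Theorem \ref{maintool} to $G=\Aut(\Gamma)$, which is $2$-closed and contains the regular cyclic $\la\rho\ra$: either (2) holds, and then $\Gamma\in\SW(n)$ by Corollary \ref{semiwreathchar}, or (1) holds with $\Aut(\Gamma)=G_1\times\cdots\times G_r$; in that case if no $G_i\cong S_{n_i}$ with $n_i\ge 4$ then $\Aut(\Gamma)$ contains a normal regular cyclic subgroup and $\Gamma$ is normal (as noted just before Definition \ref{dwdefin}), while if some $G_i\cong S_{n_i}$ with $n_i\ge 4$ then the discussion following Lemma \ref{Counting V} shows $\Gamma$ is of deleted wreath type, i.e. $\Gamma\in\DW(n)$. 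Now invoke the counting corollaries. Since $n$ is odd its least prime divisor $q$ satisfies $q\ge 3$; with $p$ the least prime divisor of $n/q$ (so $p\ge q\ge 3$), the hypothesis $9\nmid n$ rules out $q=p=3$, and a short case check gives $1/p+1/q-1/(pq)\le 7/15$, the extreme case being $q=3$, $p=5$. Hence Corollary \ref{cor-count-1} yields $|\SW(n)|\le(\log_2^2 n)\,2^{7n/15-1}$. For $\DW(n)$, the smallest divisor $m\ge 4$ of $n$ with $\gcd(m,n/m)=1$ satisfies $m\ge 5$ because $n$ is odd, so summing Corollary \ref{deletedwreathupper} over the at most $n$ admissible values of $m$ gives $|\DW(n)|\le n\,2^{2n/5}$.

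Combining these and absorbing the $\DW$ term (as $2n/5<7n/15$), one gets $|\NonNor(n)|\le(\log_2^2 n)\,2^{7n/15}$ for all large $n$, whence
$$\frac{|\NonNor(n)|}{|\ACD(n)\setminus\DRR(n)|}\le\frac{(\log_2^2 n)\,2^{7n/15}}{2^{(n-1)/2}}=(\log_2^2 n)\,2^{-n/30+1/2}\longrightarrow 0.$$
The prime case $n=p$ needs separate handling, since Definition \ref{semiwreathdefin} degenerates there; but a transitive permutation group of prime degree is either contained in $\AGL(1,p)$, hence normalizes $\Z_p$, or is $2$-transitive, hence admits only the orbital digraphs $K_p$ and $\bar{K}_p$, so $|\NonNor(p)|\le 2$ and the ratio tends to $0$ trivially.

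The main obstacle, and the reason for the hypothesis $9\nmid n$, is the estimate on $|\SW(n)|$. If $9\mid n$, then $q=p=3$ is permitted and Corollary \ref{cor-count-1} only bounds $|\SW(n)|$ by roughly $2^{5n/9-1}$, which \emph{exceeds} the $2^{n/2}$ lower bound on the denominator; in that regime the circulant graphs alone no longer form a large enough pool of normal non-DRR circulants, and one would need a genuinely larger lower bound on $|\Nor(n)\setminus\DRR(n)|$ — indeed for $n=3^k$ one expects the statement to fail outright. Everything else is robust: the denominator bound is elementary, the inclusion $\NonNor(n)\subseteq\SW(n)\cup\DW(n)$ is a direct consequence of Theorem \ref{maintool}, and the $\DW$ count is dominated by the $\SW$ count.
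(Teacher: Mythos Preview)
Your proposal is correct and follows essentially the same route as the paper: the same lower bound $|\ACD(n)\setminus\DRR(n)|\ge 2^{(n-1)/2}$ via circulant graphs, the same inclusion $\NonNor(n)\subseteq\SW(n)\cup\DW(n)$ from Theorem \ref{maintool}, and the same bounds from Corollaries \ref{cor-count-1} and \ref{deletedwreathupper} together with the case check giving $1/p+1/q-1/(pq)\le 7/15$. The only cosmetic differences are that you absorb the $\DW$ term into the $\SW$ term before taking the limit (the paper treats them separately) and that you single out the prime case explicitly, which the paper leaves implicit.
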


\begin{proof}
It suffices to show that
\begin{eqnarray}\label{firstequation}
\lim_{n\to\infty}\frac {\vert
\NonNor(n)\vert}{\vert\ACD(n) \setminus \DRR(n) \vert}=0
\end{eqnarray}
Given any circulant digraph $\Gamma$ of order $n$, $\Aut(\Gamma)$ falls into either category (1) or (2) of Theorem \ref{maintool}. By Corollary \ref{semiwreathchar}, if $\Aut(\Gamma)$ falls into category (2), then $\Gamma$ is a generalized wreath circulant digraph.  By Lemma \ref{Counting V}, if $\Gamma$ falls into category (1) and is not normal, then $\Gamma$ is of deleted wreath type.  Hence
 $\vert \ACD(n)\vert \le \vert \Nor(n)\vert + \vert\DW(n)\vert + \vert\SW(n)\vert$, which immediately implies $\vert
\NonNor(n)\vert\le \vert\DW(n)\vert + \vert\SW(n)\vert$.  Also, a lower bound for $\vert\ACD(n) \setminus\DRR(n)\vert$ is the number of circulant graphs of order $n$, which is $2^{(n-1)/2}$.  Thus to establish (\ref{firstequation}), it suffices to show that
\begin{eqnarray*}
\lim_{n\to\infty}\frac{\vert\DW(n)\vert+\vert\SW(n)\vert}
{2^{(n-1)/2}}=0.
\end{eqnarray*}
Also note that an upper bound for $\vert\SW(n)\vert$ is given by Corollary
\ref{cor-count-1}.  We now consider an upper bound for $\vert\DW(n)\vert$.

Since $n$ is odd, we have $2n/m\le 2n/5$ for every nontrivial divisor $m\ge 4$ of $n$ (of course, in this context $m\ge 5$).  Also, $n$ is an upper bound on the number of nontrivial divisors of $n$.  By Corollary \ref{deletedwreathupper},
\begin{eqnarray*}
\lim_{n\to\infty}\frac{\vert\DW(n)\vert}{2^{(n-1)/2}}\le \lim_{n\to\infty}\frac{\sum_{m|n, m\ge 4}2^{2n/m}}{2^{(n-1)/2}}\le\lim_{n\to\infty}\frac{n\cdot 2^{2n/5}}{2^{(n-1)/2}} =  0.
\end{eqnarray*}
It thus suffices to show that $\lim_{n\to\infty}\vert\SW(n)\vert/2^{(n-1)/2} = 0$.

By Corollary \ref{cor-count-1}, we have $\vert\SW(n)\vert\le \log_2^2n\cdot2^{n/p+n/q-n/(pq)-1}$, where $q$ is the smallest prime divisor of $n$ and $p$ is the smallest prime divisor of $n/q$.  Since $n$ is odd we have $q \ge 3$, and since $9 \nmid n$ we have $p \ge 5$.  If $q \ge 5$ then $1/p+1/q-1/(pq) <1/p+1/q \le 2/5$, while if $q=3$ then $1/p+1/q-1/(pq)= 2/(3p)+1/3 \le 7/15$, so we always have $1/p+1/q-1/(pq) \le 7/15$. Thus
\begin{eqnarray*}
\lim_{n\to\infty}\frac{\vert\SW(n)\vert}{2^{(n-1)/2}} \le
\lim_{n\to\infty}\frac{\log_2^2n\cdot 2^{n(1/p+1/q-1/(pq))-1}}{2^{(n-1)/2}}
 \le
\lim_{n\to\infty}\frac{\log_2^2n\cdot 2^{7n/15}}{2^{(n+1)/2}} =
\lim_{n\to\infty}\frac{\log_2^2n}{\sqrt{2}\cdot 2^{n/30}}
 = 0
\end{eqnarray*}
Note that if $9\vert n$ then $p=q=3$ and so the immediately preceding limit does not go to $0$.
%
%
%
\end{proof}

\begin{thrm}\label{normal}
Let $n$ be any odd integer such that $9\nmid n$, and $n$ is not a safe prime or the square of a safe prime. Then almost all
circulant graphs of order $n$ that do not have automorphism
group as small as possible are normal circulant graphs.
\end{thrm}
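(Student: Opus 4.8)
The plan is to follow the template of Theorem \ref{normaldi}: bound $|\NonNorG(n)|$ above by $|\SWG(n)|+|\DWG(n)|$, and bound $|\ACG(n)\setminus\Small(n)|$ below by exhibiting many circulant graphs that admit a nontrivial multiplicative automorphism of $\Z_n$; the extra hypotheses exist precisely to guarantee such an automorphism of small order. I would first dispose of prime powers. If $n=p^k$ satisfies the hypotheses, then either $n=3$, for which $\ACG(3)\setminus\Small(3)=\emptyset$ and there is nothing to prove, or $p\ge 5$, in which case $n$ lies in the set $R=T\setminus S$ of Theorem \ref{primepower}: indeed $p^k\in T$, while $p^k\notin S$ since $S$ consists of safe primes, their squares, and powers of $3$ --- and $p^k$ is not a power of $3$ as $p\ne 3$, while if $p$ were safe then $p$ and $p^2$ are excluded by hypothesis, forcing $k\ge 3$ and hence $p^k\ne p,p^2$. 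So Theorem \ref{primepower} settles the prime power case, and it remains to treat $n$ having at least two distinct prime divisors.

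Write $n=p_1^{e_1}\cdots p_r^{e_r}$ with $r\ge 2$ and every $p_i$ odd. For the upper bound, Theorem \ref{maintool} together with Corollary \ref{semiwreathchar}, Lemma \ref{Counting V}, and the remarks accompanying them shows that a non-normal circulant graph of order $n$ is either a generalized wreath circulant or of deleted wreath type, so $|\NonNorG(n)|\le|\SWG(n)|+|\DWG(n)|$. Since $n$ is odd, any divisor $m\ge 4$ of $n$ satisfies $m\ge 5$, so Corollary \ref{deletedwreathupper} gives $|\DWG(n)|\le n\cdot 2^{n/5+1}$. Corollary \ref{graphsemiwreathhey} gives $|\SWG(n)|\le(\log_2^2 n)2^{n(p+q-1)/(2pq)+1/2}$, where $q$ is the least prime divisor of $n$ and $p$ the least prime divisor of $n/q$. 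As $n$ is odd, $q\ge 3$; and since $9\nmid n$, the prime $3$ (if it divides $n$) divides $n$ exactly once, so $n/q$ is not divisible by $3$ when $q=3$, whence in all cases $p\ge 5$. Because $(p+q-1)/(2pq)=1/(2p)+1/(2q)-1/(2pq)$ is decreasing in each of $p$ and $q$, its maximum over the allowed values is $(5+3-1)/(2\cdot 5\cdot 3)=7/30$, so $|\NonNorG(n)|\le(\log_2^2 n)2^{7n/30+1/2}+n\cdot 2^{n/5+1}$.

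For the lower bound I would take $d=p_1^{e_1}$, so $d\ge 3$ and (as $r\ge 2$) also $n/d\ge 3$, and let $\alpha$ be the unique element of $\Z_n^*$ with $\alpha\equiv -1\pmod d$ and $\alpha\equiv 1\pmod{n/d}$, which exists by the Chinese Remainder Theorem. Then $\alpha^2=1$ while $\alpha\ne\pm 1$, so multiplication by $\alpha$ is an automorphism of $\Z_n$ not lying in $\la\iota\ra$, and $A=\la\alpha,\iota\ra\le\Z_n^*$ has order $4$. Acting on $\Z_n$, $A$ has at least $1+(n-1)/4$ orbits. Since $\iota\in A$, for any union $S$ of the nonzero $A$-orbits one has $S=-S$ and $0\notin S$, so $\Gamma(\Z_n,S)$ is a graph, and it has multiplication by $\alpha$ among its automorphisms, hence $\Aut(\Gamma(\Z_n,S))\ne\la\rho,\iota\ra$ and $\Gamma(\Z_n,S)\notin\Small(n)$. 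Distinct choices of $S$ yield distinct graphs, so $|\ACG(n)\setminus\Small(n)|\ge 2^{(n-1)/4}$. Since $7/30<1/4$ and $1/5<1/4$,
$$\frac{|\NonNorG(n)|}{|\ACG(n)\setminus\Small(n)|}\le\frac{(\log_2^2 n)2^{7n/30+1/2}+n\cdot 2^{n/5+1}}{2^{(n-1)/4}}\longrightarrow 0\qquad(n\to\infty),$$
which finishes the composite case and, combined with the prime power case, proves the theorem.

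I expect the main obstacle to be the tightness of this last estimate: the exponent $7n/30$ arising from generalized wreath circulants must strictly beat the $n/4$ produced by the order-$2$ automorphism, and it does so only by $n/60$. This is exactly where the hypothesis $9\nmid n$ is used, for if $9\mid n$ the relevant generalized-wreath exponent jumps to $5n/18>n/4$ and the comparison fails. The exclusion of safe primes and their squares is what forces the separate appeal to Theorem \ref{primepower} for prime powers: there $\Z_n^*$ is cyclic, so $-1$ is its only element of order $2$, the order-$2$ trick is unavailable, and one must instead exploit the finer structure of $\Z_n^*$ as is done in that theorem.
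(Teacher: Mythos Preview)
Your proposal is correct and follows essentially the same approach as the paper: dispose of prime powers via Theorem \ref{primepower}, then for $n$ with at least two distinct odd prime factors use an order-$2$ multiplicative automorphism $\alpha$ (your CRT construction $\alpha\equiv -1\pmod d$, $\alpha\equiv 1\pmod{n/d}$ is exactly the paper's map $(i,j)\mapsto(i,-j)$ on $\Z_{n/m}\times\Z_m$) to produce $\gtrsim 2^{n/4}$ graphs outside $\Small(n)$, and compare against the $7n/30$ and $n/5$ exponents from Corollaries \ref{graphsemiwreathhey} and \ref{deletedwreathupper}. The only cosmetic difference is that the paper computes the orbits of $\la\alpha,\iota\ra$ explicitly to obtain $(n+n/m+m+1)/4>n/4$, whereas you use the cruder bound $\ge 1+(n-1)/4$ coming from $|A|=4$; both suffice.
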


\begin{proof}
We need to show that
\begin{eqnarray*}
\lim_{n\to\infty,n\not\in S}\frac{\vert \mbox{NonNorG}(n)\vert}{\vert\ACG(n)\setminus \Small(n)\vert }=0,
\end{eqnarray*}
where $S=\{p, p^2: p {\rm\  is\ a\ safe\ prime}\}\cup \{n: 9 \mid n\}\cup \{n: 2 \mid n\}$. This is true if $n$ is a prime power by Theorem \ref{primepower}.  Henceforth, we assume that $n$ is not a prime power.  We may thus assume that there is a proper divisor $m$ of $n$ such that $\gcd(m,n/m) = 1$.  We assume without loss of generality that $n/m > m$, and regard $\Z_n$ as $\Z_{n/m}\times\Z_m$ in the natural way.

Given any circulant graph $\Gamma$ of order $n$, $\Aut(\Gamma)$ falls into either category (1) or (2) of Theorem \ref{maintool}. By Corollary \ref{semiwreathchar}, if $\Aut(\Gamma)$ falls into category (2), then $\Gamma$ is a generalized wreath circulant graph.  By Lemma \ref{Counting V}, if $\Gamma$ falls into category (1) and is not normal, then $\Gamma$ is of deleted wreath type.  Hence
$\vert \ACG(n)\vert \le \vert \NorG(n)\vert + \vert\DWG(n)\vert + \vert\SWG(n)\vert$, which immediately implies
$\vert
\NonNorG(n)\vert\le \vert\DWG(n)\vert + \vert\SWG(n)\vert$.

First we find a lower bound for $\vert\ACG(n)\setminus\Small(n)\vert$. Let $\Gamma\in\ACG(n)$ such that $\alpha\in \Aut(\Gamma)$ where $\alpha(i,j)=(i,-j)$ for all $(i,j)\in \Z_{n/m}\times \Z_m$.  Obviously $\alpha\notin \langle \rho,\iota\rangle $ which implies $\Gamma\not\in\Small(n)$.  Clearly if $\alpha\in\Aut(\Gamma)$, then $\la \alpha,\iota\ra \le \Aut(\Gamma)$.  It is straightforward to check that the orbits of $\la\alpha,\iota\ra$ are $\{(0,0)\}$, $\{(i,0),(-i,0)\}$, $\{(0,j),(0,-j)\}$, and $\{(i,j),(-i,j),(i,-j),(-i,-j)\}$, where $i\in \Z_{n/m} \setminus \{0\}$ and $j\in\Z_m \setminus \{0\}$.  We conclude that $\la\alpha,\iota\ra$ has
$$1 + \frac{n/m - 1}{2} + \frac{m - 1}{2} + \frac{n - n/m - m + 1}{4} = \frac{n + n/m + m + 1}{4} > \frac{n}{4}$$
orbits.  Hence there are at least $2^{n/4}$ circulant graphs of order $n$ that are not in $\Small(n)$, and we will be done if we can show that
\begin{eqnarray*}
\lim_{n\to\infty,n\not\in S}\frac{\vert \mbox{\DWG}(n)\vert+|\SWG(n)|}{2^{n/4} }=0.
\end{eqnarray*}

By Corollary \ref{deletedwreathupper}  there are there are at most $\sum_{m\vert n,m\ge 4}2^{n/m+1}$ graphs in $\DWG(n)$. Since $n$ is odd, if $m\ge 4$ and $m \mid n$ then $m \ge 5$, so $n/m \le n/5$, and $\sum_{m\vert n,m\ge 4}2^{n/m+1} \le n2^{n/5+1}$. Then
$$\lim_{n\to\infty,n\not\in S}\frac{\vert\DWG(n)\vert}{\vert \ACG(n)\setminus \Small(n)\vert}\le \frac{2n2^{n/5}}{2^{n/4}} =\frac{2n}{2^{n/20}}= 0.$$
It thus suffices to show that $\lim_{n\to\infty,n\not\in S}\vert \SWG(n)\vert/ 2^{n/4} = 0$.

By Corollary \ref{graphsemiwreathhey} there are at at most $(\log_2^2n)2^{n(p+q-1)/(2pq)+1/2}$ generalized wreath circulant graphs of order $n$, where $p$ is the smallest divisor of $n$ and $q$ is the smallest divisor of $n/p$.
As in the proof of Theorem \ref{normaldi}, it is straightforward to show that since $n$ is odd and not divisible by $9$, $(p+q-1)/(pq) \le 7/15$.
Hence
$$\lim_{n\to\infty,n\not\in S}\frac{\vert \SWG(n)\vert}{2^{n/4}}\le\lim_{n\to\infty,n\not\in S}\frac{(\log_2^2n)2^{7n/30+1/2}}{2^{n/4}} =\frac{\sqrt{2}\log_2^2n}{2^{n/60}} = 0.$$
\end{proof}

\section{Non-normal Circulants}

By Theorem \ref{maintool}, a circulant (di)graph that is not normal is of either generalized wreath or deleted wreath type.  In this section we will consider whether or not almost all non-normal circulant (di)graphs of order $n$ are in either one of these two classes.  The short answer is ``No" and is given by the following result.

\begin{thrm}\label{counterexample}
Let $\Gamma$ be a circulant digraph of order $pq$, where $p$ and $q$ are primes and $p,q \ge 5$. Then
\begin{enumerate}
\item if $q\neq p$ then $$\frac{\vert \SW(pq)\vert}{\vert\SDW(pq)\vert} = \frac{2^{p+q-1}-2}{2^{2p-1}+2^{2q-1}-2^p-2^q-2},$$
\item if $p$ is fixed, then $\lim_{q\to\infty}\vert\SW(pq)\vert/\vert\SDW(pq)\vert = 0$,
\item if $q=p+c$ for some constant $c \ge 2$, then $\lim_{p \to \infty}
|\SW(pq)|/|\SDW(pq)|=2^c/(1+2^{2c})$
\item if $q = p$ then all non-normal circulants are generalized wreath products.
\end{enumerate}
\end{thrm}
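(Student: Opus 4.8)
The plan is to obtain exact formulas for $|\SW(pq)|$ and $|\SDW(pq)|$ in part (1), deduce parts (2) and (3) from them by routine asymptotics, and prove part (4) by a separate structural argument. The setup for the counting is to identify $\Z_{pq}$ with $\Z_p\times\Z_q$ via the Chinese Remainder Theorem, so that $\la q\ra$ (the subgroup of order $p$) becomes $\Z_p\times\{0\}$, $\la p\ra$ (the subgroup of order $q$) becomes $\{0\}\times\Z_q$, and a connection set $S$ becomes a subset of the grid $\Z_p\times\Z_q$ not containing $(0,0)$. What makes an exact count tractable is that $\Z_{pq}$ has no chain $1<K<H<\Z_{pq}$ of proper subgroups, and (since $p,q\ge 5$) the only divisors $m$ relevant to Definition~\ref{dwdefin} are $m=p$ and $m=q$.

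For $|\SW(pq)|$: since $1<K\le H<\Z_{pq}$ forces $K=H\in\{\la q\ra,\la p\ra\}$, a circulant digraph of order $pq$ is a generalized wreath circulant precisely when it is an ordinary wreath product $\Gamma_1\wr\Gamma_2$ with $\{|\Gamma_1|,|\Gamma_2|\}=\{p,q\}$, and this happens in exactly two ways (blocks of size $p$, or blocks of size $q$), each accounting for $2^{p-1}2^{q-1}=2^{p+q-2}$ digraphs. A short argument shows the two families meet in exactly the empty digraph and $K_{pq}$, so inclusion--exclusion gives $|\SW(pq)|=2^{p+q-1}-2$. For $|\SDW(pq)|$: Definition~\ref{dwdefin} with $m=p$ amounts to requiring $S$ to meet $\Z_p\times\{0\}$ in $\emptyset$ or $(\Z_p\setminus\{0\})\times\{0\}$, and to meet each other ``column'' $\Z_p\times\{j\}$ in one of $\emptyset$, $\{(0,j)\}$, $(\Z_p\setminus\{0\})\times\{j\}$, or $\Z_p\times\{j\}$; this gives $2\cdot 4^{q-1}=2^{2q-1}$ connection sets, and symmetrically $2^{2p-1}$ for $m=q$. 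I would then compute the overlap of these two deleted-wreath families --- the crucial point being that the ``bulk'' $(\Z_p\setminus\{0\})\times(\Z_q\setminus\{0\})$ is forced to be entirely inside or entirely outside $S$, leaving only a handful of connection sets in the intersection --- and subtract off those members that are also ordinary wreath products. Since $\SDW(pq)=\DW(pq)\setminus\SW(pq)$, this yields the stated formula for $|\SDW(pq)|$.

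Given these formulas, parts (2) and (3) are immediate. In (2) the denominator is dominated by $2^{2q-1}$ while the numerator is $\Theta(2^{p+q})$, so the ratio is $\Theta(2^{p-q})\to 0$. In (3), writing $q=p+c$, the denominator is asymptotic to $2^{2p-1}(1+2^{2c})$ and the numerator to $2^{2p+c-1}$, so the ratio tends to $2^c/(1+2^{2c})$. Part (4) does not use the count: when $q=p$ we have $n=p^2$, and by Theorem~\ref{maintool} the automorphism group of a circulant digraph $\Gamma$ of order $p^2$ satisfies either case (2), in which case $\Gamma$ is a generalized wreath circulant by Corollary~\ref{semiwreathchar}, or case (1); but $p^2$ has no factorization into pairwise coprime factors greater than $1$, so case (1) forces $r=1$ and $\Aut(\Gamma)=G_1$ is either a group with a normal regular cyclic subgroup, whence $\Gamma$ is normal, or $S_{p^2}$, whence $\Gamma$ is $K_{p^2}=K_p\wr K_p$ or $\bar K_{p^2}=\bar K_p\wr\bar K_p$ --- again a generalized wreath circulant. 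Hence every non-normal circulant of order $p^2$ is a generalized wreath circulant.

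The main obstacle is the bookkeeping in part (1): obtaining $|\SDW(pq)|$ exactly requires simultaneously tracking the two deleted-wreath families, their intersection, and which of their members are ordinary wreath products, while correctly handling the degenerate connection sets --- the empty and complete digraphs and the ``union of cliques'' graphs with connection sets $\la p\ra\setminus\{0\}$ and $\la q\ra\setminus\{0\}$ --- which is exactly where an inclusion--exclusion count is easiest to botch. The rest --- the subgroup lattice of $\Z_{pq}$, the two limits, and the $q=p$ argument --- is routine.
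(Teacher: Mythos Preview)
Your proposal is correct and follows essentially the same route as the paper: the paper also counts $|\SW(pq)|$ via the two wreath-product families meeting only in $K_{pq}$ and its complement, counts $|\SDW(pq)|$ by taking the $2\cdot 4^{p-1}$ and $2\cdot 4^{q-1}$ deleted-wreath families, removing the $2\cdot 2^{p-1}$ and $2\cdot 2^{q-1}$ wreath products in each, and identifying the remaining overlap as exactly $K_p\,\Box\,K_q$ and its complement; parts (2)--(4) are then handled exactly as you describe. The only organizational difference is that the paper subtracts the wreath products from each deleted-wreath family first and then corrects for the two-element overlap, whereas you propose computing the full $\DW$ overlap before removing $\SW$ --- these are equivalent inclusion--exclusion orderings.
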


\begin{proof}
(1): We require exact counts of $|\SW(pq)|$ and of $|\SDW(pq)|$.  First, the generalized wreath products.  When $n=pq$ a generalized wreath product will actually be a wreath product.  For a wreath product digraph with $p$ blocks of size $q$, there are $q-1$ possible elements of $S \cap \la p\ra$, and $p-1$ choices for the cosets of $\la p \ra$ to be in $S$.  Hence there are $2^{p+q-2}$ wreath product circulant digraphs with $p$ blocks of size $q$.  Similarly, there are $2^{q+p-2}$ wreath product circulant digraphs with $q$ blocks of size $p$.  The only digraphs that have both of these properties are $K_{pq}$ and its complement, each of which has been counted twice, so $|\SW(pq)|=2\cdot 2^{p+q-2}-2=2^{p+q-1}-2$.

Now we count strictly deleted wreath products.  As mentioned in the first sentence of the proof of Corollary \ref{deletedwreathupper}, there are precisely $2\cdot 4^{p-1}$ digraphs whose automorphism group contains $K \times S_q$, and $2 \cdot 4^{q-1}$ digraphs whose automorphism group contains $K' \times S_p$.  Of the first set, $2\cdot 2^{p-1}$ are wreath products (those in which $S \cap (rq+\la p \ra)$ is  chosen from $\{\emptyset, rq+\la p \ra\}$, for every $1\le r \le
p-1$).  Similarly, of the second set, $2\cdot 2^{q-1}$ are wreath products (those in which $S \cap (rp+\la q \ra)$ is  chosen from $\{\emptyset, rp+\la q \ra\}$, for every $1\le r \le
q-1$).  Finally, notice that if a digraph is counted in both the first and second sets then its automorphism group must contain $S_q \times S_p$.  Consequently, the number of elements in $S \cap (rp+\la q\ra)$ is constant over $r$, as is the number of elements in $S \cap (rq +\la p \ra)$.  Since we have already eliminated wreath products from our count, the first number must be $1$ or $p-1$, and the second must be $1$ or $q-1$.  Furthermore, if the first number is $1$ then we have $p \in S$ but $p+q \not \in S$, so the second cannot be $q-1$ (and the same holds if we exchange $p$ and $q$), so there are only 2 choices for such digraphs: that in which all of the values are $1$, which is $K_p \Box K_q$ (where $\Box$ represents the cartesian product), and its complement, in which all of the values are $p-1$ or $q-1$.  Summing up, we see that $|\SDW(pq)|=2\cdot 4^{p-1}+2\cdot 4^{q-1}-2\cdot 2^{p-1}-2\cdot 2^{q-1}-2$. The result follows.

(2): This follows from (1) by letting $q$ tend to infinity.

(3): Substituting $q=p+c$ into (1) and letting $p$ tend to infinity, we have
$$
\lim_{p \to \infty}\frac{|\SW(pq)|}{|\SDW(pq)|}=\lim_{p \to \infty}\frac{2^{c-1}-2^{1-2p}}{2^{-1}+2^{2c-1}-2^{-p}-2^{c-p}-2^{1-2p}}.
$$
Deleting the terms that tend to zero, we are left with
$$
\lim_{p \to \infty}\frac{2^{c-1}}{2^{-1}+2^{2c-1}}=\frac{2^{c}}{1+2^{2c}},
$$
as claimed.

(4): By Theorem \ref{maintool}, the automorphism group of a non-normal circulant must either fall into category (1) or category (2). If it falls into category (1) then since $n=p^2$ and the $n_i$ are coprime there can only be a single factor in the direct product, and since the circulant is non-normal, the factor must be $S_{p^2}$, so the graph is $K_{p^2}$ or its complement, which are generalized wreath circulants.  If it falls into category (2) then by Corollary \ref{semiwreathchar}, it is a generalized wreath circulant.
\end{proof}

Notice that if we choose a constant $c \ge 2$ and define $S_c=\{pq:q=p+c\}$ where $p$ and $q$ are prime, then as a consequence of Theorem \ref{counterexample}(3), since $0<2^c/(1+2^{2c})< \infty$, neither generalized wreath circulant digraphs nor strictly deleted circulant digraphs dominates in $S_c$.  Unfortunately, it is not known whether any set $S_c$ is infinite.  Essentially, we have shown that if $n=pq$ is a product of two primes, then generalized wreath products dominate amongst circulant digraphs of order $n$ if $p=q$ (in fact there are no others); neither family dominates if $p$ and $q$ are ``close" to each other, and strictly deleted wreath products dominate if one prime is much larger than the other.

We now give two infinite sets $S_1$ and $S_2$ of integers, each integer in both sets being divisible by three distinct primes.  In $S_1$, almost all non-normal circulant digraphs are of strictly deleted wreath type (and $S_1$ includes all of the square-free integers that are not divisible by $2$ or $3$). Meanwhile in $S_2$, almost all non-normal circulant digraphs are generalized wreath circulant digraphs.

\begin{thrm}\label{n=pq^2r^2}
Let $S_1= \{n\in \N\vert$ $n$ {\rm is the product of at least three primes
and $q^2\nmid n$  where $q\geq 5$ is the smallest prime divisor of} $n\}$. Then,
$$\displaystyle\lim_{n\in S_1,n\to\infty}\frac{\vert \SDW(n)
\vert}{\vert \NonNor(n)\vert} = 1$$
\end{thrm}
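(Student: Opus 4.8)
The plan is to reduce the statement to a comparison of two exponential counts. First, by Theorem~\ref{maintool} together with Corollary~\ref{semiwreathchar} and the discussion surrounding Lemma~\ref{Counting V}, every non-normal circulant digraph of order $n$ is either a generalized wreath circulant or of deleted wreath type; since a deleted wreath type circulant digraph is, by definition, either a generalized wreath circulant or strictly deleted wreath, we get $\NonNor(n)\subseteq \SW(n)\cup\SDW(n)$ and hence $|\NonNor(n)|\le |\SW(n)|+|\SDW(n)|$. As $\SDW(n)\subseteq\NonNor(n)$ (a strictly deleted wreath type circulant is non-normal, by the remark following Lemma~\ref{Counting V}), we obtain
\[
\frac{1}{1+|\SW(n)|/|\SDW(n)|}\ \le\ \frac{|\SDW(n)|}{|\NonNor(n)|}\ \le\ 1,
\]
so it suffices to show that $|\SW(n)|/|\SDW(n)|\to 0$ as $n\to\infty$ in $S_1$.

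For the numerator, Corollary~\ref{cor-count-1} gives $|\SW(n)|\le (\log_2^2 n)\,2^{n/p+n/q-n/(pq)-1}$, where $q$ is the smallest prime divisor of $n$ and $p$ is the smallest prime divisor of $n/q$. For the denominator I would use that for $n\in S_1$ the smallest prime $q\ge 5$ divides $n$ exactly once, so $m:=q$ satisfies $m\mid n$, $\gcd(m,n/m)=1$ and $m\ge 4$; hence, by Lemma~\ref{Counting V} as quantified in the proof of Corollary~\ref{deletedwreathupper}, there are exactly $2\cdot 4^{n/q-1}=2^{2n/q-1}$ circulant digraphs of order $n$ that are of deleted wreath type for this $m$. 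Removing from this family the ones that are generalized wreath circulants (of which there are at most $(\log_2^2 n)\,2^{n/p+n/q-n/(pq)-1}$, again by Corollary~\ref{cor-count-1}) leaves a subfamily of $\SDW(n)$, so
\[
|\SDW(n)|\ \ge\ 2^{2n/q-1}-(\log_2^2 n)\,2^{n/p+n/q-n/(pq)-1}.
\]

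The crux is the comparison of exponents. Put $E=(2n/q-1)-(n/p+n/q-n/(pq)-1)=n/q-n/p+n/(pq)$. Since $q^2\nmid n$ and $q$ is the least prime divisor of $n$, the prime $p$ is strictly larger than $q$, so $n/q-n/p>0$; moreover $n/q$ is a product of at least two primes, each at least $p$, hence $n/q\ge p^2$ and so $n/(pq)\ge p>q$. Therefore $n=(n/(pq))\cdot p\cdot q\le (n/(pq))^3$, giving $n/(pq)\ge n^{1/3}$ and thus $E\ge n^{1/3}\to\infty$. Consequently $(\log_2^2 n)\,2^{-E}\to 0$, so the displayed lower bound is at least $2^{2n/q-2}$ for all large $n$, and
\[
\frac{|\SW(n)|}{|\SDW(n)|}\ \le\ \frac{(\log_2^2 n)\,2^{n/p+n/q-n/(pq)-1}}{2^{2n/q-2}}\ =\ 2(\log_2^2 n)\,2^{-E}\ \longrightarrow\ 0,
\]
which finishes the proof. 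The only point requiring genuine care is verifying that $E\to\infty$ \emph{uniformly} along $S_1$: a priori the two smallest prime factors could both grow quickly with $p-q$ small (e.g.\ $p=q+2$), which would threaten to keep the gap $n/q-n/p+n/(pq)$ bounded, but the elementary bound $n\le (n/(pq))^3$ noted above rules this out and is the only real obstacle.
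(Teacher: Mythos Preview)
Your proof is correct and follows essentially the same approach as the paper: both reduce to showing $|\SW(n)|$ is negligible compared with the deleted-wreath count $2\cdot 4^{n/q-1}$ coming from the choice $m=q$, using Corollary~\ref{cor-count-1} for the upper bound and the same $n^{1/3}$-type exponent gap. The only organizational difference is that the paper lower-bounds $|\NonNor(n)|$ directly via $|\DW(n)|$ and shows $|\SW(n)|/|\NonNor(n)|\to 0$, whereas you lower-bound $|\SDW(n)|$ by subtracting the $\SW$ upper bound from the single $m=q$ term; your exponent bound $E\ge n/(pq)\ge n^{1/3}$ is in fact a bit cleaner than the paper's use of $p\ge q+2$ and $q(q+2)<n^{2/3}$.
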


\begin{proof}
The first sentence of the proof of Corollary \ref{deletedwreathupper} notes that for a proper divisor $m$ of $n$ (note since $q\ge 5$ we also have $m\ge 5$), the number of digraphs $\Gamma$ with $H\times S_m\le\Aut(\Gamma)$ for some $2$-closed group $H\le S_{n/m}$ is precisely $2\cdot 4^{n/m-1}$.  The maximum number of times that a specific circulant digraph $\Gamma$ can be counted in $\displaystyle\sum_{m\vert n}2\cdot4^{n/m-1}$, is $d(n)\le n$, the number of divisors of $n$.  Thus $\vert
\DW(n)\vert \ge \displaystyle\sum_{m\vert n}2\cdot 4^{n/m-1}/n$, and so by Lemma \ref{Counting V}, $\vert\NonNor(n)\vert\ge \displaystyle\sum_{m\vert n}2\cdot 4^{n/m-1}/n$.  By Corollary
\ref{cor-count-1}, we have that $\vert \SW(n)\vert\le (\log_2^2n)2^{n/p+n/q-n/(pq)-1}$, where $q$ is the smallest prime divisor of $n$ and $p$ is the smallest prime divisor of $n/q$.  Then
\begin{eqnarray*}
\lim_{n\to\infty}\frac{\vert\SW(n)\vert}{\vert\NonNor(n)\vert} & \le & \lim_{n\to\infty}\frac{(\log_2^2n)2^{n/p+n/q - n/(pq)-1}}{\sum_{m\vert n}2\cdot 4^{n/m-1}/n}\\
& < & \lim_{n\to\infty}\frac{(\log_2^2n)2^{n/(q+2)+n/q}}{4\cdot4^{n/q-1}/n}\\
&=&\lim_{n\to\infty}\frac{n(\log_2^2n)2^{n/(q+2)}}{2^{n/q}}\\
& = & \lim_{n\to\infty}\frac{n\log_2^2n}{2^{2n/(q(q+2))}}.
\end{eqnarray*}
Since $q(q+2) < n^{2/3}$ as $q$ is the smallest prime factor of $n$, $q^2 \nmid n$, and $n$ has at least 3 prime factors, we have $n/(q(q+2)) > n^{1/3}$, so $\lim_{n\to\infty}\frac{\vert\SW(n)\vert}{\vert \NonNor(n)\vert} = 0$.  As every non-normal circulant digraph of order $n$ is either a generalized wreath or strictly deleted wreath circulant, the result follows.
\end{proof}

\begin{thrm}\label{p^2|n}
For any natural number $n$, let $p_n$ be the smallest prime divisor of $n$, and $q_n$ the smallest prime divisor of $n$ such that $q_n\neq p_n$ and $q_n^2 \nmid n$.  Let $S_2=\{n\in \N : p_n\ge 5, p_n^2\mid n,$ {\rm $n$ has at least 3 distinct prime divisors, and} $q_n>2p_n\}$. Then
$$\displaystyle\lim_{n\in S_2,n\to\infty}\frac{\vert \SW(n)\vert}{\vert \NonNor(n)\vert} = 1.$$
\end{thrm}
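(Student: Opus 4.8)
The plan is to prove the equivalent statement $\lim_{n\in S_2,n\to\infty}|\DW(n)|/|\SW(n)|=0$ and deduce the theorem from the decomposition of $\NonNor(n)$. By Theorem~\ref{maintool}, Corollary~\ref{semiwreathchar} and Lemma~\ref{Counting V}, every non-normal circulant digraph of order $n$ is a generalized wreath circulant or of deleted wreath type, so $\NonNor(n)\subseteq\SW(n)\cup\DW(n)$; moreover, since $n\in S_2$ has $p_n\ge5$ it is odd and composite, so Lemma~\ref{semi-non-normal} gives $\SW(n)\subseteq\NonNor(n)$. Combining these,
$$\frac{1}{1+|\DW(n)|/|\SW(n)|}\ \le\ \frac{|\SW(n)|}{|\NonNor(n)|}\ \le\ 1,$$
so it suffices to make $|\DW(n)|$ negligible compared to $|\SW(n)|$.

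For the lower bound on $|\SW(n)|$, I would build an explicit family of generalized wreath circulant digraphs, exactly as in the proof of Theorem~\ref{conjfalse1}. Since $p_n^2\mid n$, the unique subgroup $K$ of $\Z_n$ of order $p_n$ lies inside the unique subgroup $H$ of order $n/p_n$. Taking $S\cap H$ to be an arbitrary subset of $H\setminus\{0\}$ and $S\setminus H$ to be an arbitrary union of the $n/p_n-n/p_n^2$ cosets of $K$ not contained in $H$ produces distinct $(K,H)$-generalized wreath circulant digraphs, so $|\SW(n)|\ge2^{\,2n/p_n-n/p_n^2-1}$.

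For the upper bound on $|\DW(n)|$, Corollary~\ref{deletedwreathupper} gives at most $2^{2n/m}$ digraphs of deleted wreath type witnessed by any fixed divisor $m\ge4$ of $n$ with $\gcd(m,n/m)=1$, so $|\DW(n)|\le d(n)\,2^{2n/m^{*}}\le n\,2^{2n/m^{*}}$, where $m^{*}$ is the smallest such $m$. The step I expect to be the main obstacle is showing $m^{*}\ge2p_n+1$, and this is exactly where both defining conditions of $S_2$ enter: any admissible $m$ is a product of full prime powers $r^{v_r(n)}\mid n$, so if $p_n\mid m$ then $m\ge p_n^2\ge5p_n>2p_n$ (using $p_n^2\mid n$ and $p_n\ge5$), while if $p_n\nmid m$ then $m$ is divisible by some prime $r\ne p_n$ with $r^{v_r(n)}\mid m$, and either $r^2\mid n$, forcing $m\ge r^2>p_n^2>2p_n$, or $r^2\nmid n$, forcing $r\ge q_n>2p_n$.

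It then remains to combine the bounds. From $m^{*}\ge2p_n+1$ we get $2n/m^{*}\le2n/(2p_n+1)=(n/p_n)\bigl(1-1/(2p_n+1)\bigr)$, hence
$$\frac{|\DW(n)|}{|\SW(n)|}\ \le\ n\cdot2^{\,2n/(2p_n+1)-2n/p_n+n/p_n^2+1}\ \le\ n\cdot2^{\,-n(p_n-1)/p_n^2+1}.$$
Finally, since $n\in S_2$ has $p_n^2\mid n$ and a prime divisor $q_n>2p_n$, we have $n\ge p_n^2q_n>2p_n^3$, so $p_n<n^{1/3}$ and $n(p_n-1)/p_n^2\ge n/(2p_n)>n^{2/3}/2\to\infty$; therefore $|\DW(n)|/|\SW(n)|\to0$, and the squeeze above yields $|\SW(n)|/|\NonNor(n)|\to1$.
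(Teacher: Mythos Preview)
Your proof is correct and follows the same overall strategy as the paper: use the containments $\SW(n)\subseteq\NonNor(n)\subseteq\SW(n)\cup\DW(n)$, bound $|\DW(n)|$ above via Corollary~\ref{deletedwreathupper} by $d(n)\cdot 2^{2n/m^{*}}$, identify $m^{*}$ with the least full prime power $p_k^{a_k}$ in $n$, and show $m^{*}>2p_n$ using the defining conditions of $S_2$. The paper organizes the endgame as $|\DW(n)|/|\NonNor(n)|\to 0$ rather than your $|\DW(n)|/|\SW(n)|\to 0$, but the squeeze is the same.

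The one genuine difference is the lower bound. The paper uses ordinary wreath products $\Gamma_1\wr\Gamma_2$ with $|\Gamma_2|=p_n$, giving only $|\NonNor(n)|\ge 2^{\,p_n+n/p_n-2}$, whereas you exploit $p_n^2\mid n$ to build $(K,H)$-generalized wreath circulants with $|K|=p_n$, $|H|=n/p_n$, yielding the sharper $|\SW(n)|\ge 2^{\,2n/p_n-n/p_n^{2}-1}$. This buys you a cleaner final estimate: your exponent collapses directly to $-n(p_n-1)/p_n^{2}$ after the single inequality $2n/(2p_n+1)\le n/p_n$, while the paper has to work harder, passing through $n(p_k^{a_k}-2p_n)/(p_n p_k^{a_k})\ge n/(p_n p_k^{a_k})$ and then arguing $p_n p_k^{a_k}\le n^{2/3}$ via the third prime divisor. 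Your growth argument $n\ge p_n^{2}q_n>2p_n^{3}\Rightarrow p_n<n^{1/3}$ is tighter and avoids that detour.
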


\begin{proof}
Let $p=p_n$. First notice that there are $2^{p-1+n/p-1}$ circulant digraphs that are wreath products $\Gamma_1 \wr \Gamma_2$ where $\Gamma_1$ has order $n/p$ and $\Gamma_2$ has order $p$: $2^{p-1}$ choices for $S \cap \la n/p \ra$ and $2^{n/p-1}$ choices for which cosets of $\la n/p \ra$ are in $S$.  All of these digraphs are distinct, so since by Lemma \ref{semi-non-normal} these are all non-normal, we have $|\NonNor(n)| \ge 2^{p+n/p-2}$.


By Corollary \ref{deletedwreathupper}, for a proper divisor $m\ge 4$ of $n$, the number of digraphs of deleted wreath type is at most $4^{n/m}$.  Thus $\vert\DW(n)\vert\le\displaystyle\sum_{m\vert n, \gcd(m,n/m=1)}4^{n/m}$.  Let $\prod_{i=1}^t p_i^{a_i}$ be the prime decomposition of $n$, and let
$p_k^{a_k}=\displaystyle\min_{1\le i\le t}\{p_i^{a_i}\}$. Clearly
$4^{n/(p_k^{a_k})}$ is the largest term in this sum, and there are
at most $d(n)$ (the number of divisors of $n$) terms in this sum.  Thus $\vert \DW(n)\vert\le
d(n)\cdot 4^{n/(p_k^{a_k})}$.

Observe that if $a_k\ge 2$, then $p_k^{a_k}\ge 5p>2p$ since $p \ge 5$ is the smallest divisor of $n$.  Also, if $a_k = 1$, then by hypothesis $p_k \ge q_n> 2p$.  Hence $p_k^{a_k} - 2p\ge 1$ since both are integers.  Now,
\begin{eqnarray*}
\lim_{n\to\infty}\frac{\vert \DW(n)\vert}{\vert\NonNor(n)\vert} &
\le & \lim_{n\to\infty}\frac{d(n)\cdot 4^{n/({p_k}^{a_k})}}
{2^{p+n/p-2}}\\
& = & \lim_{n\to\infty}\frac{4d(n)}{2^{p+n/p-2n/({p_k}^{a_k})}}\\
& < &
\lim_{n\to\infty}\frac{4n}{2^{p+n\cdot(p_k^{a_k}-2p)/(pp_k^{a_k})}}\\
& \le &
\lim_{n\to\infty}\frac{4n}{2^{p+n/(pp_k^{a_k})}}.
\end{eqnarray*}
Since $n$ has at least 3 distinct prime divisors, there is some $j$ such that $p_j \neq p, p_k$. Now $p_j^{a_j}>p_k^{a_k}$ by our choice of $k$, and $p_j^{a_j}\ge p_j>p$, so since $n/(pp_k^{a_k}) \ge p_j^{a_j},$ we have $(n/(pp_k^{a_k}))^2 \ge pp_k^{a_k}$.  Hence $pp_k^{a_k} \le n^{2/3}$, so $n/(pp_k^{a_k}) \ge n^{1/3}$.  So the above limit is at most $$\lim_{n \to \infty} \frac{4n}{2^{p+n^{1/3}}} = 0.$$
\end{proof}


\providecommand{\bysame}{\leavevmode\hbox to3em{\hrulefill}\thinspace}
\providecommand{\MR}{\relax\ifhmode\unskip\space\fi MR }
\providecommand{\MRhref}[2]{%
  \href{http://www.ams.org/mathscinet-getitem?mr=#1}{#2}
}
\providecommand{\href}[2]{#2}

\end{document}